 \newtheorem{thm}{Theorem}[section]
 \newtheorem{prop}[thm]{Proposition}
 \newtheorem{lem}[thm]{Lemma}
\theoremstyle{definition}
\newtheorem{defn}[thm]{Definition}
\theoremstyle{remark}
\newtheorem{rem}[thm]{Remark}
\newcommand{\Z}{\mathbb{Z}}
\newcommand{\Q}{\mathbb{Q}}
\newcommand{\R}{\mathbb{R}}
\newcommand{\C}{\mathbb{C}}
\newcommand{\GL}{\mathrm{GL}}
\newcommand{\SL}{\mathrm{SL}}
\renewcommand{\b}[1]{\boldsymbol{#1}} 
\renewcommand{\a}{\mathbf{a}} 
\newcommand{\transpose}[1]{\text{$^t\!#1$}}
\newcommand{\G}{\ifmmode {\mathcal{G}}\else${\mathcal{G}}$\ \fi}
\def\sectionnam{\@empty}
\def\subsectionnam{\@empty}
\numberwithin{equation}{section}
\begin{document}

\title[On $p$-adic Measures for Quaternionic Modular Forms] 
{On $p$-adic Measures for Quaternionic Modular Forms}
\author{Yubo Jin}
\address{Institute for Advanced Study in Mathematics\\ Zhejiang University\\ Hangzhou, 310058, China}
\email{yubo.jin@zju.edu.cn}
\date{\today}
\subjclass{11F33, 11F55, 11F67}
\keywords{Quaternionic modular forms, Standard $L$-functions, Eisenstein series, $p$-adic measures}
\maketitle

\begin{abstract}
The purpose of this paper is to study the special values of the standard $L$-functions for quaternionic modular forms using the doubling method. We obtain an integral representation for the $L$-function twisted by a character following the work of Böcherer-Schmidt (\cite{BS}) on Siegel modular forms. As an application, we construct the $p$-adic measure interpolating certain special $L$-values.
\end{abstract}

\tableofcontents

\section{Introduction}

In this paper, we study the special $L$-values of quaternionic modular forms and construct the $p$-adic measure. The main strategy is to study the $L$-functions utilizing an integral representation involving the Siegel Eisenstein series and the properties of special $L$-values then follow from the explicit Fourier expansion of Siegel Eisenstein series. For Siegel modular forms there are two main approaches to obtain the integral representation and construct $p$-adic measures. One is the Rankin-Selberg method (e.g. \cite{CP}), the other is the doubling method (e.g. \cite{BS}). We will mainly follow the doubling method in \cite{BS} to get the integral representation for twisted $L$-functions.

Let $\mathbb{B}=\Q\oplus\Q\zeta\oplus\Q\xi\oplus\Q\zeta\xi$ be a quaternion algebra with $\zeta^2=\alpha,\xi^2=\beta,\zeta\xi=-\xi\zeta$ and $\alpha,\beta\in\Z$. The modular forms studied in this paper are defined on the algebraic group
\[
G=\{g\in\mathrm{SL}_{2n}(\mathbb{B}):g\phi g^{\ast}=\phi\},\phi=\left[\begin{array}{cc}
0 & \epsilon\\
1_n & 0
\end{array}\right],
\]
with $\epsilon=\pm 1$. Let $K$ be a maximal compact subgroup of $G(\R)$. It is well known that $G(\R)/K$ is a Riemannian manifold and is actually Hermitian if $\epsilon=-1,\mathbb{B}\otimes_{\Q}\R\cong\mathbb{H}$ or $\epsilon=1,\mathbb{B}\otimes_{\Q}\R\cong M_2(\R)$. Here $\mathbb{H}$ is the Hamilton quaternion. We will restrict ourselves to above two cases in this paper which we refer as Case I and Case II. More precisely the symmetric space defined in Case I is the Type D domain in \cite{LKW} and the Type C domain in Case II. When $G(\R)/K$ is a Hermitian symmetric space, the algebraicity of modular forms makes sense (see \cite{M}). Indeed, the algebraicity for quaternionic modular forms of Case I is already studied in our previous paper \cite{T}. But in this paper we only consider the case that the associated symmetric space has even degree, i.e. is a `tube' domain.  Instead of defining algebraic modular forms using CM points as in \cite[Section 5.2]{T}, we have an easier description via Fourier expansions. In particular, we have a well-defined action of $\mathrm{Aut}(\C)$ on modular forms in terms of the action on the Fourier coefficients. This will allow us to obtain a more precise algebraic result in Proposition \ref{6.2}. That is we can not only show that the special $L$-values are algebraic up to some constants but also describe the action of $\mathrm{Aut}(\C)$. 

After reviewing the preliminary of quaternionic modular forms and their standard $L$-functions in Section \ref{section 2}, the main computations are carried out in Section \ref{section 3}, \ref{section 4}, \ref{section 5}. Let $f:\mathcal{H}_n\to\C$ be a cusp form on degree $n$ upper half plane. We will define a function $\mathfrak{g}:\mathcal{H}_n\times\mathcal{H}_n\to\C$ deriving form the pullbacks of certain Siegel Eisenstein series on $\mathcal{H}_{2n}$ along $\mathcal{H}_n\times\mathcal{H}_n\to\mathcal{H}_{2n}$. In order to construct the $p$-adic measure we need to define the function $\mathfrak{g}$ such that its Fourier coefficients have natural $p$-adic interpolations, i.e. satisfying the Kummer congruence (Proposition \ref{6.3}). The idea of the construction of $\mathfrak{g}$ is same as the one in \cite{BS}. Indeed, it is same as the twisted Eisenstein series there except for the slight difference for local sections at bad places away from $p$. However, the global computations of the inner product $\langle\mathfrak{g}(z,-\overline{w}),f(z)\rangle$ in \cite{BS} do not trivially generalize to our case since the maximal order $\mathcal{O}$ of $\mathbb{B}$ is not a principal ideal domain in general. For example, the coset decomposition in \cite[Proposition 2.1]{BS} does not work in our case. Therefore we proceed our computations adelically which is also inspired by \cite{LZ,SU}. The adelic formulas \eqref{27},\eqref{32} are reformulated in classical language and the differential operators are applied in Section \ref{section 4}. The differential operator constructed in this paper is similar to the one used in \cite{BS} for Siegel modular forms which is different to the one we used in \cite{T} so the holomorphic projection is avoided. After a final slight modification of $\mathfrak{g}$, we summarize all the analytic results in Section \ref{section 5}. More precisely, with $\mathfrak{g}(z,w)$ defined by \eqref{51},\eqref{52} whose Fourier expansion is computed explicitly in Proposition \ref{5.1}, we obtain an inner product formula of the form 
\[
\begin{aligned}
&\left\langle\left\langle\mathfrak{g}(z,-\overline{w}),f\left|_{\kappa}\left[\begin{array}{cc}
0 & \epsilon\\
1 & 0
\end{array}\right]\right.\right\rangle^z_{\Gamma^0(N^2p)},f\left|_{\kappa}\left[\begin{array}{cc}
1 & 0\\
0 & N^2p
\end{array}\right]\right.\right\rangle^w_{\Gamma^0(N^2p)}\\
=&(\text{certain constants})\cdot\langle f,f\rangle_{\Gamma^0(Np)}L_{Np}(t,f,\chi)E_p(t,\chi)
\end{aligned}
\]
for $t=2n+1,...,\kappa$ where $L_{Np}(s,f,\chi)$ is the partial $L$-function with Euler factors at places dividing $Np$ removed and $E_p(s,\chi)$ the modified $p$-Euler factor defined by \eqref{53}.

As an application, we discuss the properties of special values in the final section. The analytic property (Proposition \ref{6.1}) and algebraic property (Proposition \ref{6.2}) can be obtained immediately from the inner product formula \eqref{54} and the Fourier expansion in proposition \ref{5.1}. For the $p$-adic interpolation, we follow the strategy in \cite{CP} by showing that the Kummer congruences hold for those special $L$-values. 

We fix the following setup. Let $f\in S_{\kappa}(\Gamma_0(N))$ be an eigenform and $0\neq f_0\in S_{\kappa}(\Gamma_0(Np))$ its $p$-stabilisation for a fixed odd prime $p$ with $p\nmid N$. We assume all primes $l\nmid N$ split in $\mathbb{B}$ and $f$ is $p$-ordinary. Denote the $p$-Satake parameter of $f_0$ as $\beta_{1,p},...,\beta_{2n,p}$ and for any integer $L|p^{\infty}$ write $\alpha(L)$ for the eigenvalue of $f_0$ under the action of $U(L)$: $f_0|U(L)=\alpha(L)f_0$. Here $U(L)$ is the Hecke operator defined at the end of Section \ref{section2.4} and $p$-ordinary means that $\alpha(L)\in\mathcal{O}_{\C_p}^{\times}$ is a $p$-adic unit. We state our main result for the $p$-adic interpolation below.

\begin{thm}
(Theorem \ref{6.4},\ref{6.5}) There is a unique $p$-adic measure $\mu$ on $\Z_p^{\times}$ such that:\\
(1) (Case I) For $t=2n+1,...,\kappa$ and primitive Dirichlet character $\chi$ of $p$-power conductor $c_{\chi}=p^{\mathfrak{c}}$ we have
\[
\int_{\Z_p^{\times}}\chi(x)x^{-t+2n}d\mu=\frac{c_{\chi}^{2nt-n(n-1)}}{\alpha(c_{\chi}^2)G(\chi)^n}\frac{\Gamma_{2n}(t)\Lambda_{\infty}(t)}{\pi^{2nt-n(2n-1)}}\frac{E_p(t,\chi)L_{Np}(f,t,\chi)}{\langle f_0,f_0\rangle}.
\]
(2) (Case II) For $t=2n+1,...,\kappa$ and primitive Dirichlet character $\chi$ of $p$-power conductor $c_{\chi}=p^{\mathfrak{c}}$ with $\chi(-1)=(-1)^t$ we have
\[
\begin{aligned}
\int_{\Z_p^{\times}}\chi(x)x^{-t+2n}d\mu&=\frac{c_{\chi}^{(2n-1)t-n(n+1)}}{\alpha(c_{\chi}^2)}\frac{g(\chi)}{G(\chi)^n}\frac{\Gamma(t-2n)\Gamma_{4n}(t)\Lambda_{\infty}(t)}{\pi^{(4n+1)t-n(2n-1)}}\\
&\times\frac{1-\overline{\chi(p)}p^{t-2n-1}}{1-\chi(p)p^{2n-t}}\frac{E_p(t,\chi)L_{Np}(f_0,t,\chi)}{\langle f_0,f_0\rangle},
\end{aligned}
\]
Here:
\begin{itemize}
\item{$L_{Np}(s,f,\chi)$ is the partial $L$-function with Euler factors at places dividing $Np$ removed and $E_p(s,\chi)$ the modified $p$-Euler factor defined by \eqref{53};}
\item{$g(\chi)$ is the usual Gauss sum for the Dirichlet character $\chi$ and $G(\chi)$ is the `quaternionic Gauss sum' of $\chi$ defined by \eqref{57};}
\item{$\Gamma_n(s)=\pi^{n(n-1)}\prod_{i=0}^{n-1}\Gamma(s-2i)$ for Case I, $\Gamma_n(s)=\pi^{\frac{n(n-1)}{4}}\prod_{i=0}^{n-1}\Gamma(s-i/2)$ for Case II and\\
(Case I)
\[
\Lambda_{\infty}(t)=\prod_{i=0}^{\kappa-t-1}C_{2n}\left(t+i-3n+\frac{1}{2}\right)C_{2n}(-t-i)\prod_{i=0}^{n-1}\frac{\Gamma(t+\kappa-2i)}{\Gamma(t+\kappa+n-2i)},
\]
(Case II)
\[
\Lambda_{\infty}(t)=\prod_{i=0}^{\kappa-t-1}C_{2n}\left(t+i-2n+\frac{1}{2}\right)C_{2n}(-t-i)\prod_{i=0}^{2n-1}\frac{\Gamma\left(\frac{t+\kappa-i}{2}\right)}{\Gamma\left(\frac{t+\kappa+2n-i}{2}\right)},
\]
with $C_n(s)=s(s+\frac{1}{2})...(s+\frac{n-1}{2})$.
}
\end{itemize}
\end{thm}

We remark that the $p$-adic aspects of the quaternionic modular forms are not as well understood as those of Siegel modular forms and Hermitian modular forms. For example, one can expect the result in \cite{LZ} can be generalized to modular forms studied here. That is one may hope to construct a $p$-adic $L$-function for `ordinary families of quaternionic modular forms'. Though the analytic computations in \cite{LZ} can be generalized to quaternionic case, the Hida theory as in \cite{Hida} has not established in our case since the Shimura varieties involved in quaternionic case are not of PEL-type. The $p$-adic modular forms as well as the Hida theory for quaternionic Shimura varieties will be explored in author's subsequent research.

\section{Quaternionic Modular Forms and $L$-functions}
\label{section 2}

In this section we review basics of quaternionic modular forms and their (standard) $L$-functions. The references for this kind of modular forms includes \cite{Q,K} for Case I and \cite{G,Sh99} for Case II. For preliminaries on quaternion algebras the reader can refer to \cite{JV}.

\subsection{Quaternion algebra and algebraic groups}

We start by fixing some notations for quaternion algebras and our algebraic groups. 

A quaternion algebra is a central simple algebra of dimension four over $\Q$. Picking up a basis, we can write it in the form
\[
\mathbb{B}=\Q\oplus\Q\zeta\oplus\Q\xi\oplus\Q\zeta\xi,
\]
where
\[
\zeta^2=\alpha,\xi^2=\beta,\zeta\xi=-\xi\zeta,
\]
with $\alpha,\beta$ nonzero squarefree integers. The main involution of $\mathbb{B}$ is given by
\[
\overline{\cdot}:\mathbb{B}\to\mathbb{B}:a+b\zeta+c\xi+d\zeta\xi\mapsto\overline{a+b\zeta+c\xi+d\zeta\xi}=a-b\zeta-c\xi-d\zeta\xi.
\]
We remind the reader that we will also use the same notation $\overline{\cdot}$ for usual complex involution if its meaning is clear from the context. The trace and norm are defined by $\mathrm{tr}(x)=x+\overline{x},N(x)=x\overline{x}$ for $x\in\mathbb{B}$. As usual we denote $M_n(\mathbb{B})$ as a set of $n\times n$ matrices with entries in $\mathbb{B}$. For $X\in M_n(\mathbb{B})$, we write $X^{\ast}=\transpose{\overline{X}},\hat{X}=(X^{\ast})^{-1}$ for the conjugate transpose and its inverse (if makes sense). The determinant $\det$ and trace $\mathrm{tr}$ for matrices in $M_n(\mathbb{B})$ will mean the reduced norm and reduced trace. We use the same notations $\det,\mathrm{tr}$ for usual determinant and trace of matrices whose entries are complex numbers. Denote
\[
\GL_n(\mathbb{B})=\{g\in M_n(\mathbb{B}):\det(g)\neq0\},\quad\mathrm{SL}_n(\mathbb{B})=\{g\in M_n(\mathbb{B}):\det(g)=1\}.
\]
Let $\mathbb{A}$ be the adele ring of $\Q$. For a place $v$ we mean either a finite place corresponding to a prime or the archimedean place $\infty$. The set of finite places is denoted as $\mathbf{h}$. We write $\mathbb{A}=\mathbb{A}_{\mathbf{h}}\R$ with finite adeles $\mathbb{A}_{\mathbf{h}}$ and $x=x_{\mathbf{h}}x_{\infty}$ with $x\in\mathbb{A},x_{\mathbf{h}}\in\mathbb{A}_{\mathbf{h}},x_{\infty}\in\R$. Fix embeddings $\overline{\Q}\to\overline{\Q}_v$ and set $\mathbb{B}_v=\mathbb{B}\otimes_{\Q}\Q_v$. The previous definition of trace, norm, determinant naturally extends locally and adelically. Fix a maximal order $\mathcal{O}$ of $\mathbb{B}$ and set $\mathcal{O}_v=\mathcal{O}\otimes_{\Z}\Z_v$. For a place $v$ we call $v$ splits if $\mathbb{B}_v\cong M_2(\Q_v)$. If this is the case we fix an identification $\mathfrak{i}_v:\mathbb{B}_v\stackrel{\sim}\longrightarrow M_2(\Q_v)$ and assume $\mathfrak{i}_v(\mathcal{O}_v)=M_2(\Z_v)$ for finite places. Write $\mathbb{B}_{\infty}=\mathbb{B}\otimes_{\Q}\R$ for $v=\infty$. We call $\mathbb{B}$ definite if $\mathbb{B}_{\infty}=\mathbb{H}$ is non-split and indefinite if otherwise. Here $\mathbb{H}=\R\oplus\R\mathbf{i}\oplus\R\mathbf{j}\oplus\R\mathbf{ij}$ with $\mathbf{i}^2=\mathbf{j}^2=-1,\mathbf{ij=-ji}$ is the Hamilton quaternion algebra.

We consider following two algebraic groups.\\
(Case I) $\mathbb{B}$ is definite.
\[
G:=G(\Q):=G_n(\Q):=\{g\in\SL_{2n}(\mathbb{B}):gJ_ng^{\ast}=J_n\},\quad J_n=\left[\begin{array}{cc}
0 & -1_n\\
1_n & 0
\end{array}\right].
\]
(Case II) $\mathbb{B}$ is indefinite.
\[
G:=G(\Q):=G_n(\Q):=\{g\in\SL_{2n}(\mathbb{B}):gI_ng^{\ast}=I_n\},\quad I_n=\left[\begin{array}{cc}
0 & 1_n\\
1_n & 0
\end{array}\right].
\]
We also define (connected) similitude groups\\
(Case I)
\[
\tilde{G}:=\{g\in\GL_{2n}(\mathbb{B}):gJ_ng^{\ast}=\nu(g)J_n,\det(g)=\nu(g)^n,\nu(g)\in\Q^+\}.
\]
(Case II)
\[
\tilde{G}:=\{g\in\GL_{2n}(\mathbb{B}):gI_ng^{\ast}=\nu(g)I_n,\det(g)=\nu(g)^n,\nu(g)\in\Q^+\}.
\]

Fix an integral ideal $\mathfrak{n}=(N)$ of $\mathcal{O}$ generated by $N=\prod_vp_v^{n_v}\in\Z$, we define open compact subgroups $K_0(\mathfrak{n})$ and $K^0(\mathfrak{n})$ of $G(\mathbb{A}_{\mathbf{h}})$ by 
\[
K_0(\mathfrak{n}):=\prod_vK_v=\prod_v
\left\{\gamma:=\left[\begin{array}{cc}
a & b\\
c & d
\end{array}\right]\in G(\mathcal{O}_v):\gamma\equiv\left[\begin{array}{cc}
\ast & \ast\\
0_n & \ast
\end{array}\right]\text{ mod }p_v^{n_v}\right\},
\]
\[
K^0(\mathfrak{n}):=\prod_vK_v:=\prod_v
\left\{\gamma=\left[\begin{array}{cc}
a & b\\
c & d
\end{array}\right]\in G(\mathcal{O}_v):\gamma\equiv\left[\begin{array}{cc}
\ast & 0_n\\
\ast & \ast
\end{array}\right]\text{ mod }p_v^{n_v}\right\}.
\]
Denote $\Gamma_0(N):=G(\Q)\cap K_0(\mathfrak{n})$, $\Gamma^0(N):=G(\Q)\cap K^0(\mathfrak{n})$. We have
\begin{equation}
G(\mathbb{A})=G(\Q)G(\R)K_0(\mathfrak{n})=G(\Q)G(\R)K^0(\mathfrak{n}).\label{1} 
\end{equation}
In Case II, this simply follows from the strong approximation. In Case I, the strong approximation does not hold in general. But for subgroups $K_0(\mathfrak{n}),K^0(\mathfrak{n})$, above relation follows by the weak approximation and \cite[Lemma 3.2]{Q}.

\subsection{Symmetric spaces}

Define $S:=S(\Q):=\{X\in M_n(\mathbb{B}):X^{\ast}=-\epsilon X\}$ for the (additive) algebraic group of hermitian matrices (resp. skew-hermitian matrices) with $\epsilon=-1$ (resp. $\epsilon=1$) in Case I (resp. Case II). We use $S^+$ (resp. $S_+$) denote the subgroup of $S$ consisting of positive definite (resp. positive) elements. The symmetric space is defined as
\[
\mathcal{H}:=\mathcal{H}_n:=\{z=x+iy\in  S(\R)\otimes_{\R}\C:x\in S(\R),y\in S^+(\R)\},
\]
with the action of $\tilde{G}(\R)$ given by
\[
g=\left[\begin{array}{cc}
a & b\\
c & d
\end{array}\right]\times z\mapsto (az+b)(cz+d)^{-1}.
\]

Set $\mu(g,z):=cz+d$ and $j(g,z):=\det[\mu(g,z)]$ for the automorphy factor. Put $\delta(z):=\det(y)^{1/2}$ so $\delta(gz)=|j(g,z)|^{-1}\delta(z)$.

For Case I, the symmetric space is same as the quaternionic upper half plane in \cite{Q} and we denote $z_0:=i\cdot 1_n$ for the origin of $\mathcal{H}$. By embedding the quaternion algebra $\mathbb{H}$ into $M_2(\C)$ one can show that this domain is of type D domain in \cite{LKW}, see also \cite[Section 2.2]{T} for more realization of this space.

For Case II, since $\mathbb{B}$ is indefinite we have an isomorphism $\mathbb{B}_{\infty}\cong M_2(\R)$. For example, without lossing generality, write $\mathbb{B}_{\infty}=\R\oplus\R\zeta\oplus\R\xi\oplus\R\zeta\xi$ with $\zeta^2=1,\xi^2=-1,\zeta\xi=-\xi\zeta$. The isomorphism can be given by
\[
\mathfrak{i}:\mathbb{B}_{\infty}\stackrel{\sim}\longrightarrow M_2(\R),\quad x=a+b\zeta+c\xi+d\zeta\xi\mapsto\left[\begin{array}{cc}
a+b & c+d\\
-c+d & a-b
\end{array}\right].
\]
Clearly $\mathfrak{i}(x^{\ast})=-J_1\transpose{\mathfrak{i}(x)}J_1$. Extending this isomorphism entries by entries we obtain 
\[
\mathfrak{i}:M_n(\mathbb{B})\stackrel{\sim}\longrightarrow M_{2n}(\R),\quad X=(x_{ij})\mapsto(\mathfrak{i}(x_{ij})).
\]
We have $\mathfrak{i}(X^{\ast})=-J_n'\transpose{\mathfrak{i}(X)}J_n'$ where $J_n'=\mathrm{diag}[J_1,...,J_1]$ with $n$ copies. This induces the identification
\[
\iota:G(\R)\stackrel{\sim}\longrightarrow\mathrm{Sp}(2n)(\R)=\{g\in\GL_{4n}(\R):gJ_{2n}\transpose{g}=J_{2n}\},
\]
given by $g\mapsto\mathrm{diag}[1,-J_n']\mathfrak{i}(g)\mathrm{diag}[1,J_n']$. It is well known that $\mathrm{Sp}(2n)(\R)$ acts on Siegel upper half plane
\[
\mathfrak{H}_{2n}=\{z=x+iy\in M_{2n}(\R)\otimes_{\R}\C:\transpose{z}=z,y>0\},
\]
by
\[
g=\left[\begin{array}{cc}
a & b\\
c & d
\end{array}\right]\times z\mapsto (az+b)(cz+d)^{-1}.
\]
We fix an identification
\[
\iota:\mathcal{H}_n\stackrel{\sim}\longrightarrow\mathfrak{H}_{2n},z=x+iy\mapsto\mathfrak{i}(x)J_n'+i\mathfrak{i}(y)J_n',
\]
satisfying $\iota(gz)=\iota(g)\iota(z)$ for $g\in G(\R),z\in\mathcal{H}_n$. The origin $z_0$ of $\mathcal{H}_n$ is chosen as $z_0:=i\cdot \xi$ so that $\iota(z_0)=i\cdot 1_{2n}$.

\subsection{Definition of modular forms}

Fix an integer $N$ and $\kappa\in\Z$. The group $\tilde{G}$ acts on function $f:\mathcal{H}\to\C$ by
\[
(f|_{\kappa}\gamma)(z)=\det(\gamma)^{\kappa/2}j(\gamma,z)^{-\kappa}f(\gamma z),\quad \gamma\in\tilde{G},z\in\mathcal{H}.
\]

\begin{defn}
A holomorphic function $f:\mathcal{H}\to\C$ is called a modular form for the congruence subgroup $\Gamma$ of weight $\kappa$ if
\[
f|_{\kappa}\gamma=f\text{ for all }\gamma=\left[\begin{array}{cc}
a_{\gamma} & b_{\gamma}\\
c_{\gamma} & d_{\gamma}
\end{array}\right]\subset\Gamma.
\]
Space of such functions is denoted as $M_{\kappa}(\Gamma)$.
\end{defn}

Denote $e(z):=\mathrm{exp}(2\pi i z)$ for $z\in\C$. Set $\lambda:=\frac{1}{2}\mathrm{tr}$ for Case I and $\lambda:=\mathrm{tr}$ for Case II and $\lambda(x+iy):=\lambda(x)+i\lambda(y)$. For $f\in M_{\kappa}(\Gamma_0(N))$ and $\gamma\in G$, there is a Fourier expansion of the form
\[
(f|_{\kappa}\gamma)(z)=\sum_{\tau\in S_+}c(\tau,\gamma,f)e(\lambda(\tau z)).
\]
We say that $f$ is a cusp form if $c(\tau,\gamma,f)=0$ for any $\gamma$ unless $\tau\in S^+$. The subspace of cusp forms is denoted as $S_{\kappa}(\Gamma_0(N))$. When $\gamma=1$ we omit $\gamma$ in the formula and write
\[
f(z)=\sum_{\tau\in\Lambda}c(\tau)e(\lambda(\tau z)),
\] 
where more precisely, $\Lambda=\{\tau\in S:\lambda(\tau s)\in\Z\text{ for any }s\in S(\mathcal{O})\}$ is a lattice in $S_+$. This condition is equivalent to saying that $\tau_{ii}\in\Z$ and $2\tau_{ij}\in\mathcal{O}^{\#}$. Here $\mathcal{O}^{\#}$ is the dual lattice of $\mathcal{O}$ respect to $\lambda$. 

Let
\[
\mathbf{d}z=\left\{\begin{array}{cc}
\delta(z)^{-2n+1}dz & \text{ Case I, }\\
\delta(z)^{-2n-1}dz & \text{ Case II.}
\end{array}\right.
\] 
be the invariant differential of $\mathcal{H}$. Here $dz=dxdy$ is the usual Euclidean measure. For $f,h\in M_{\kappa}(\Gamma_0(N))$ the Petersson inner product is defined as
\begin{equation}
\langle f,g\rangle:=\langle f,h\rangle_{\Gamma_0(N)}:=\int_{\Gamma_0(N)\backslash\mathcal{H}}f(z)\overline{h(z)}\delta(z)^{2k}\mathbf{d}z,\label{2}
\end{equation}
whenever the integral converges. For example, the integral converges when one of $f,h$ is a cusp form.

We also need the adelic definition of modular forms. Let $\mathfrak{n}=(N)$ be the integral ideal in $\mathcal{O}$ generated by $N$. An (adelic) modular form is defined as a holomorphic function $\mathbf{f}:G(\mathbb{A})\to\C$ satisfying
\[
\mathbf{f}(\alpha gkk_{\infty})=j(k_{\infty},z_0)\mathbf{f}(g),
\] 
for $\alpha\in G,k\in K,k_{\infty}\in K_{\infty}$. Here $K$ is an open compact subgroup of $G(\mathbb{A}_{\mathbf{h}})$ and $K_{\infty}$ is the maximal compact subgroup of $G(\R)$ fixing the origin $z_0$. Denote $\mathcal{M}_{\kappa}(K)$ for the space of such functions and $\mathcal{S}_{\kappa}(K)$ be the subspace of cusp forms. We mainly consider the group $\Gamma_0(N),\Gamma^0(N),K_0(\mathfrak{n}),K^0(\mathfrak{n})$ in this paper. By \eqref{1}, two definitions of modular forms are related by
\[
\mathcal{M}_{\kappa}(K_0(\mathfrak{n}))\cong M_{\kappa}(\Gamma_0(N)),\quad\mathcal{S}_{\kappa}(K_0(\mathfrak{n}))\cong S_{\kappa}(\Gamma_0(N)),\quad\mathbf{f}\leftrightarrow f
\]
with $\mathbf{f}(g)=j(g_{\infty},z_0)^{-k}f(g_{\infty}z_0)$. Same relations hold for changing $\Gamma_0(N)$ to $\Gamma^0(N)$ and $K_0(\mathfrak{n})$ to $K^0(\mathfrak{n})$. For $\mathbf{f,h}\in\mathcal{M}_{\kappa}(K_0(\mathfrak{n}))$ we can also define the Petersson inner product as an integral (if converges)
\begin{equation}
\langle\mathbf{f},\mathbf{h}\rangle:=\langle\mathbf{f},\mathbf{h}\rangle:=\int_{G(\Q)\backslash G(\mathbb{A})}\mathbf{f}(g)\overline{\mathbf{h}(g)}\mathbf{d}g.\label{3}
\end{equation}
Here $\mathbf{d}g$ is the Haar measure of $G(\mathbb{A})$ such that the volume of $K_0(\mathfrak{n})$ is $1$ and $dg_{\infty}=\mathbf{d}(g_{\infty}z_0)$. If $f,h$ are the classical modular form correspond to $\mathbf{f,h}$ then \eqref{2},\eqref{3} are related by
\begin{equation}
\langle\mathbf{f,h}\rangle=\langle f,h\rangle.
\end{equation}
The similar relation also holds for $K^0(\mathfrak{n})$.

\subsection{Hecke operators and $L$-functions}
\label{section2.4}

Fix $N\in\Z$ as above and assume that all primes $l\nmid N$ splits in $\mathbb{B}$. Let 
\[
\mathfrak{M}=\GL_n(\mathbb{B}_{\mathbf{h}})\cap M_n(\mathcal{O}_{\mathbf{h}}),\quad\mathfrak{Q}=\{\mathrm{diag}[\hat{w},w],w\in M\},\quad\mathfrak{X}=K_0(\mathfrak{n})\mathfrak{Q}K_0(\mathfrak{n}).
\]
The Hecke algebra $\mathbb{T}=\mathbb{T}(K_0(\mathfrak{n}),\mathfrak{X})$ is defined as the $\Q$-algebra generated by double cosets $[K_0(\mathfrak{n})\xi K_0(\mathfrak{n})]$ with $\xi\in\mathfrak{X}$. Given $\mathbf{f}\in\mathcal{M}_{\kappa}(K_0(\mathfrak{n}))$ the Hecke operator $[K_0(\mathfrak{n})\xi K_0(\mathfrak{n})]$ acts on $\mathbf{f}$ by
\begin{equation}
(\mathbf{f}|[K_0(\mathfrak{n})\xi K_0(\mathfrak{n})])(g)=\sum_{y\in Y}\mathbf{f}(gy^{-1}),
\end{equation}
where $Y$ is a finite subset of $G_{\mathbf{h}}$ such that
\[
K_0(\mathfrak{n})\xi K_0(\mathfrak{n})=\bigcup_{y\in Y}K_0(\mathfrak{n})y,\quad y=\left[\begin{array}{cc}
a_y & b_y\\
c_y & d_y
\end{array}\right].
\]

The following lemma can be proved as in \cite[Lemma 19.2]{Sh00} and \cite[Lemma 6.1]{Q}.

\begin{lem}
\label{lem2.2}
Let $\xi=\mathrm{diag}[\hat{w},w]\in\mathfrak{Q}_v$. For $v|\mathfrak{n}$, we have
\[
K_v\xi K_v=\bigcup_{d,b}K_v\left[\begin{array}{cc}
\hat{d} & \hat{d}b\\
0 & d
\end{array}\right],
\]
with $d\in \GL_n(\mathcal{O}_v)\backslash\GL_n(\mathcal{O}_v)w\GL_n(\mathcal{O}_v)$, $b\in S(\mathcal{O}_v)/d^{\ast}S(\mathcal{O}_v)d$.
\end{lem}

Let $\mathbf{f}\in\mathcal{S}_{\kappa}(K_0(\mathfrak{n}))$ be an eigenform such that $\mathbf{f}|[K_0(\mathfrak{n})\xi K_0(\mathfrak{n})]=\lambda_{\mathbf{f}}(\xi)\mathbf{f}$ for all $\xi\in\mathfrak{X}$. Let $\chi:\Q^{\times}\backslash\mathbb{A}^{\times}\to\C^{\times}$ be a Hecke character with $\chi^{\ast}:\Z\to\C$ its associate Dirichlet character. The (standard) $L$-function is defined as
\begin{equation}
L(s,\mathbf{f},\chi)=\Lambda^{2n}_N(s,\chi)\sum_{\substack{\xi\in K_0(\mathfrak{n})\backslash\mathfrak{X}/K_0(\mathfrak{n})\\\xi=\mathrm{diag}[\hat{w},w]}}\lambda_{\mathbf{f}}(\xi)\chi^{\ast}(\det(w))\det(w)^{-s}
\end{equation}
with\\
(Case I)
\begin{equation}
\Lambda^{2n}_N(s,\chi)=\prod_{i=0}^{2n-1}L_{N}(2s-2i,\chi^2),
\end{equation}
(Case II)
\begin{equation}
\Lambda^{2n}_N(s,\chi)=L_N(s,\chi)\prod_{i=1}^{2n}L_N(2s-2i,\chi^2).
\end{equation}
Here the subscript $N$ in $L$-function means the Euler factors at $l|N$ are removed. For $\chi=1$, we simply denote $L(s,\mathbf{f}):=L(s,\mathbf{f},1),\Lambda_N^{2n}(s):=\Lambda_N^{2n}(s,1)$. 

\begin{prop}
\label{prop 2.3}
The $L$-function has an Euler product expression
\[
L(s,\mathbf{f},\chi)=\prod_{l}L^{(l)}(s,\mathbf{f},\chi),
\]
with $L^{(l)}(s,\mathbf{f},\chi)$ given by\\
(Case I)
\[
\begin{aligned}
&\prod_{i=1}^{2n}\left((1-\alpha_{i,l}\chi^{\ast}(l)l^{2n-1-s})(1-\alpha_{i,l}^{-1}\chi^{\ast}(l)l^{2n-1-s})\right)^{-1}&l\nmid N,\\
&\prod_{i=1}^{2n}(1-\beta_{i,l}\chi^{\ast}(l)l^{2n-2-s})^{-1}&l|N,l\text{ split },\\
&\prod_{i=1}^{n}(1-\delta_{i,l}\chi^{\ast}(l)l^{2n-1-s})^{-1}&l|N,l\text{ nonsplit },
\end{aligned}
\]
(Case II)
\[
\begin{aligned}
(1-l^{2n-s}\chi^{\ast}(l))^{-1}&\prod_{i=1}^{2n}\left((1-\alpha_{i,l}\chi^{\ast}(l)l^{2n-s})(1-\alpha_{i,l}^{-1}\chi^{\ast}(l)l^{2n-s})\right)^{-1}&l\nmid N,\\
&\prod_{i=1}^{2n}(1-\beta_{i,l}\chi^{\ast}(l)l^{2n-s})^{-1}&l|N,l\text{ split },\\
&\prod_{i=1}^n(1-\delta_{i,l}\chi^{\ast}(l)l^{2n-3-s})^{-1}&l|N,l\text{ nonsplit },
\end{aligned}
\]
for those Satake parameters $\alpha^{\pm}_{1,l},...,\alpha^{\pm}_{2n,l};\beta_{1,l},...,\beta_{2n,l};\delta_{1,l},...,\delta_{n,l}$ of $\mathbf{f}$. 
\end{prop}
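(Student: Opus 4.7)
The plan is to derive the Euler product by exploiting the local--global factorization of both the Hecke algebra and the Dirichlet series, then to identify each local factor using the Satake isomorphism at good primes and the explicit coset decompositions of Lemma \ref{lem2.2} at bad primes.

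First I would observe that $K_0(\mathfrak{n}) = \prod_v K_v$ and $\mathfrak{X} = \prod_v \mathfrak{X}_v$ with $\mathfrak{X}_v = K_v\mathfrak{Q}_v K_v$ both factor over places, so
\[
K_0(\mathfrak{n})\backslash\mathfrak{X}/K_0(\mathfrak{n}) \;\cong\; \prod_v K_v\backslash\mathfrak{X}_v/K_v
\]
as a restricted direct product. Since $\mathbf{f}$ is an eigenform, $\xi\mapsto\lambda_{\mathbf{f}}(\xi)$ is multiplicative on elements with disjoint local support, and both $\chi^{\ast}\circ\det$ and $w\mapsto\det(w)^{-s}$ are multiplicative over primes. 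The prefactor $\Lambda_N^{2n}(s,\chi)$ is itself an Euler product by construction. Therefore $L(s,\mathbf{f},\chi) = \prod_l L^{(l)}(s,\mathbf{f},\chi)$ with
\[
L^{(l)}(s,\mathbf{f},\chi) \;=\; \Lambda^{2n}_{N,l}(s,\chi) \sum_{\xi_l \in K_l\backslash\mathfrak{X}_l/K_l} \lambda_{\mathbf{f}}(\xi_l)\,\chi^{\ast}(\det w_l)\,\det(w_l)^{-s},
\]
and the task reduces to evaluating each local factor explicitly.

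For $l\nmid N$, $K_l$ is hyperspecial maximal compact, the local Hecke algebra is commutative via Satake, and Lemma \ref{lem2.2}(2) provides the Cartan decomposition. Writing the local eigenvalues in terms of Satake parameters and applying a Macdonald-type identity for the generating series over dominant cocharacters, the local zeta sum becomes a rational function whose Weyl-denominator residue is cancelled exactly by $\Lambda^{2n}_{N,l}(s,\chi)$, leaving the displayed $4n$-fold symmetric product in $\alpha_{i,l}^{\pm1}$. The extra factor $(1-l^{2n-s}\chi^{\ast}(l))^{-1}$ in Case II is contributed by the additional $L_N(s,\chi)$ in $\Lambda_N^{2n}(s,\chi)$, reflecting the odd long root of the dual group. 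For $l\mid N$, Lemma \ref{lem2.2}(1) shows that each $K_l\xi K_l$ is parametrized by $d$ alone after summing the $b$-variable away, which does not interact with $\det w$ and which the $K_l$-equivariance of $\mathbf{f}$ absorbs. When $l$ splits, $\GL_n(\mathbb{B}_l)\cong\GL_{2n}(\Q_l)$ and the sum becomes a $\GL_{2n}$-type Hecke--Poincar\'e series whose eigenvalue factors as the product of $2n$ factors in $\beta_{i,l}$; when $l$ is nonsplit, $\GL_n(\mathbb{B}_l)$ has semisimple rank $n$ over $\Q_l$ and the analogous computation yields the $n$-fold product in $\delta_{i,l}$.

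The main technical obstacle is the unramified computation: one must verify that the Satake-parameter generating series, once multiplied by the local factor of $\Lambda^{2n}_N(s,\chi)$, collapses precisely to the symmetric product displayed, with the correct spectral shifts $2n-1-s$ in Case I versus $2n-s$ in Case II. This discrepancy traces back to the root system at a split finite place: in Case I the group is of type $C_{2n}$ (after identifying $\mathbb{B}_l\cong M_2(\Q_l)$ and viewing $J_n$ as a symplectic form on $\Q_l^{4n}$), whereas in Case II the analogous identification produces a split orthogonal structure of larger rank, accounting for the extra linear Euler factor. Both computations follow the standard template developed in \cite{Sh00} and can be quoted; the bad-prime evaluations then reduce to routine book-keeping via Lemma \ref{lem2.2}(1).
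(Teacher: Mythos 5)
Your overall architecture --- factor the double-coset sum over places using multiplicativity of the Hecke eigenvalues, then evaluate each local factor via Satake at good primes and via Lemma \ref{lem2.2} at bad primes --- matches the paper's, which identifies the local groups and Hecke algebras and then invokes \cite[Theorem 19.8]{Sh00}. But your key structural input is reversed. In Case I the form $J_n$ is skew-hermitian ($J_n^{\ast}=-J_n$), and at a split place the Morita reduction turns $G_n(\Q_l)$ into a split \emph{even orthogonal} group (the paper uses $\mathrm{SO}(2n,2n)$, type $D_{2n}$); in Case II the form $I_n$ is hermitian and the reduction gives the \emph{symplectic} group $\mathrm{Sp}(2n)$ acting on $\Q_l^{4n}$ (type $C_{2n}$) --- the paper's own identification $\iota:G(\R)\stackrel{\sim}\longrightarrow\mathrm{Sp}(2n)(\R)$ in Section 2.2 already exhibits this for Case II. You assert the opposite. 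This is not cosmetic: the dual group of $\mathrm{Sp}_{4n}$ is $\mathrm{SO}_{4n+1}(\C)$, whose odd-dimensional standard representation is what produces the extra linear factor $(1-l^{2n-s}\chi^{\ast}(l))^{-1}$, while the dual of $\mathrm{SO}(2n,2n)$ is $\mathrm{SO}_{4n}(\C)$ and yields none; the shifts $2n-1-s$ versus $2n-s$ likewise come from the respective modulus characters. Carried through, your identification would place the extra factor in Case I and delete it from Case II, contradicting the statement being proved. (Your attribution of that factor to the extra $L_N(s,\chi)$ in $\Lambda_N^{2n}(s,\chi)$ is also off: locally that is $(1-\chi^{\ast}(l)l^{-s})^{-1}$, which cancels against the denominator of the local Dirichlet series rather than supplying the numerator factor $(1-l^{2n-s}\chi^{\ast}(l))^{-1}$.)

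A second, smaller gap: at $l\mid N$ you claim the $b$-sum in Lemma \ref{lem2.2}(1) ``does not interact with $\det w$.'' It does: the number of representatives $b$ is the index $[S(\mathcal{O}_v):d^{\ast}S(\mathcal{O}_v)d]$, which equals $|\det(d)|_l^{-(2n-1)}$ in Case I (and $|\det d|_l=|\det w|_l$ since $d$ runs over $\GL_n(\mathcal{O}_v)w\GL_n(\mathcal{O}_v)$), and this power of $|\det(d)|_l$ is exactly what converts the naive exponent into the stated $2n-2-s$. Getting this index right is in fact the main content of the paper's proof, which is devoted to correcting precisely this exponent in the earlier computation of \cite[Proposition 6.3]{Q}.
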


\begin{proof}
If $p$ splits in $\mathbb{B}$ we identify the group $G_n(\Q_v)$ with orthogonal group $\mathrm{SO}(n,n)(\Q_v)$ in Case I and symplectic group $\mathrm{Sp}(2n)(\Q_v)$ in Case II. We also identify the local Hecke algebra $\mathbb{T}_v$ with the local Hecke algebra of orthogonal and symplectic group which can be defined in a same way as above. The Satake map is chosen as in \cite[Section 16]{Sh97} for $l\nmid N$, \cite[Proposition 6.3]{Q} for $l$ nonsplit and $l$ split in Case I, \cite[Theorem 2.9]{Sh94} for $l$ in Case II. Then the computation is same as \cite[Theorem 19.8]{Sh00}. For Case I, this is also calculated in \cite[Proposition 6.3]{Q}. However, there is a typo in that computation. Let $l$ split and keep the notation and $\omega_0$ there, we need to calculate
\[
\sum_{d\in\GL_{2n}(\Z_l)\backslash \GL_{2n}(\Q_l)\cap M_{2n}(\Z_l)}\omega_0(\GL_{2n}(\Z_l)d)[S'(\Z_l):d^{\ast}S'(\Z_l)d]|\det(d)|_l^s.
\]
Here 
\[
S'=\{h\in M_{2n}(\Q_p):h^{\ast}=-h\},
\]
so the index $[S'(\Z_l):d^{\ast}S'(\Z_l)d]=|\det(d)|^{-2n+1}_l$ as calculated in \cite[lemma 13.2]{Sh97}. The sum then equals $\prod_{i=1}^{2n}(1-t_il^{2n-2-s})^{-1}$ by of \cite[lemma 16.3]{Sh97}. Similarly, for $p$ nonsplit the term $|\det(d)|_v^{-(2n-1)}$ in that proof should be $|\det(d)|_v^{-(2n+1)}$.
\end{proof}

By strong approximation of $\mathrm{SL}_n(\mathbb{B})$, the representatives of $K_0(\mathfrak{n})\backslash\mathfrak{X}/K_0(\mathfrak{n})$ can be taken in $G(\Q)\cap\mathfrak{X}$. For such $\xi\in G(\Q)\cap\mathfrak{X}$ we have $K_0(\mathfrak{n})\xi K_0(\mathfrak{n})\cap G(\Q)=\Gamma_0(N)\xi \Gamma_0(N)$. Its action on $f\in M_{\kappa}(\Gamma_0(N))$ is given by
\begin{equation}
f|[\Gamma_0(N)\xi\Gamma_0(N)]=\sum_{y\in Y}f|_{\kappa}y,
\end{equation}
if
\[
\Gamma_0(N)\xi\Gamma_0(N)=\bigcup_{y\in Y}\Gamma_0(N)y,\quad y=\left[\begin{array}{cc}
a_y & b_y\\
c_y & d_y
\end{array}\right].
\]
Let $f\in S_{\kappa}(\Gamma_0(N))$ be an eigenform and $\chi^{\ast}$ a Dirichlet character corresponds to the Hecke character $\chi$ one similarly define the $L$-function $L(s,f,\chi^{\ast})$ such that $L(s,f,\chi^{\ast})=L(s,\mathbf{f},\chi)$.

We can also extend the definition of Hecke operators to similitude groups. For example, for an integer $M|N$, we consider another Hecke operator $U(M)$ given by double coset
\[
\Gamma_0(N)\left[\begin{array}{cc}
1 & 0\\
0 & M
\end{array}\right]\Gamma_0(N).
\]
It can be decomposed as
\[
\Gamma_0(N)\left[\begin{array}{cc}
1 & 0\\
0 & M
\end{array}\right]\Gamma_0(N)=\bigcup_{\mathfrak{T}\in S(\mathcal{O})/MS(\mathcal{O})}\Gamma_0(N)\left[\begin{array}{cc}
1 & \mathfrak{T}\\
0 & M
\end{array}\right].
\]
The action of $U(M)$ on $M_{\kappa}(\Gamma_0(N))$ is defined as
\begin{equation}
(f|U(M))(z)=M^{-\frac{n(n+1)}{2}}\sum_{\mathfrak{T}\in S(\mathcal{O})/MS(\mathcal{O})}f(M^{-1}(z+\mathfrak{T})).
\end{equation}
Write the Fourier expansion of $f$ as
\[
f(z)=\sum_{\tau}c(\tau,y)e(\lambda(\tau x)),
\]
with $z=x+iy$. Then
\begin{equation}
(f|U(M))(z)=\sum_{\tau}c(M\tau,M^{-1}y)e(\lambda(\tau x)).
\end{equation}

\subsection{$p$-stabilisation}

Let $f\in S_{\kappa}(\Gamma_0(N))$ be an eigenform. Fix a prime $p\nmid N$, we are going to construct an eigenform $f_0\in S_{\kappa}(\Gamma_0(Np))$ such that $f_0|U(p)=\alpha_0(p)f$ for some $\alpha_0$ and $f_0$ has same eigenvalues with $f$ for Hecke operators away from $p$. The following construction is taken from \cite{CP} for Siegel modular forms. Let $v$ be the place corresponding to $p$. We can extend the definition of Hecke algebra to similitude groups. For simplicity we write $\tilde{G}_v$ for such local group after identified with orthogonal or symplectic group and write $K_v=\tilde{G}_v\cap\GL_{2n}(\Z_v)$. Let $\mathbb{H}_p$ be the local Hecke algebra generated by double coset $[K_v\xi K_v]$ for $\xi\in \tilde{G}_v$. The action of $\mathbb{H}_p$ on $\mathcal{M}_{\kappa}(K_0(\mathfrak{n}))$ is defined similarly as before. There is a well known Satake isomorphism
\[
\omega:\mathbb{H}_p\stackrel{\sim}\longrightarrow\Q[t^{\pm}_0,...,t_{2n}^{\pm}]^{W_{2n}}.
\]
Here $W_{2n}$ is the Weyl group generated by permutation group $S_{2n}$ and mappings:
\[
t_0\mapsto t_0t_j,t_j\mapsto t_j^{-1},t_i\mapsto t_i, (1\leq i\leq 2n,i\neq j).
\]
For example, this can be constructed as in \cite[2.1.7]{CP} so that the restriction to $\mathbb{T}_p$ is same as the one in the proof of Proposition \ref{prop 2.3}. Consider the polynomial $\tilde{Q}(X)\in\Q[t_0,...,t_{2n}][X]$:
\[
\tilde{Q}(X)=(1-t_0z)\prod_{r=1}^{2n}\prod_{1\leq i_1\leq ...\leq i_r\leq 2n}(1-t_0t_{i_1}...t_{i_r}X).
\]
Then there exists a polynomial $Q(X)=\sum_{i=0}^{2^{2n}}(-1)^iT_iX^i\text{ with }T_i\in\mathbb{H}_p$ such that $\omega(Q(X))=\tilde{Q}(X)$. This can be decomposed as
\[
Q(X)=\left(\sum_{i=1}^{2^{2n}-1}V_i(p)X^i\right)(1-U(p)X),V_i(p)=\sum_{j=0}^i(-1)^jT_jU(p)^{i-j}.
\]
Take $\mathbf{f}\in\mathcal{S}_{\kappa}(K_0(\mathfrak{n}))$ be an eigenform corresponding to $f\in S_{\kappa}(\Gamma_0(N))$ with Satake parameters $\alpha_{0,p},...,\alpha_{2n,p}$. Put
\[
\mathbf{f}_0'=\sum_{i=1}^{2^{2n}-1}\alpha_{0,p}^{-i}\mathbf{f}|V_i(p),
\]
then $\mathbf{f}_0'\in \mathcal{S}_{\kappa}(K_0(\mathfrak{n}p^{2^{2n}-1}))$ is an eigenform satisfying $\mathbf{f}_0'|U(p)=\alpha_{0,p}\mathbf{f}_0'$ and has same eigenvalues as $\mathbf{f}$ for Hecke operators away from $p$. This determines an eigenform $f_0'\in S(\Gamma_0(Np^{2^{2n}-1}))$ with same properties. Define
\[
f_0=\sum_{\gamma\in \Gamma_0(Np^{2^{2n}-1})\backslash \Gamma_0(Np)}f_0'\left|_{\kappa}\left[\begin{array}{cc}
0 & \epsilon\\
Np^{2^{2n}-1} & 0
\end{array}\right]\gamma\left[\begin{array}{cc}
0 & \epsilon\\
Np & 0
\end{array}\right]\right..
\]
Then $f_0\in S_{\kappa}(\Gamma_0(Np))$ and $f_0|U(p)=\alpha_{0,p}f_0$.We call such $f_0$ the $p$-stabilisation of $f$. The eigenform $f$ is called $p$-ordinary if $|\alpha_{0,p}|_p=1$. 

\begin{rem}
The construction of $f_0$ above is not necessarily nonzero as pointed out in \cite{CP}. In the sequel we shall always apply the assumption that $f_0\neq 0$. There is another construction of $p$-stabilisation in \cite{BS}. We do not use the method there because we do not have the `Andrianov-identity' (\cite[(9.6)]{BS}) in our setting. 
\end{rem}

\section{Eisenstein Series and the Doubling Method}
\label{section 3}

In this section, we use the doubling method to study $L$-functions. We define a Siegel Eisenstein series on $G_{2n}$ and consider its pull-back along $G_n\times G_n\to G_{2n}$. Its Petersson inner product against our eigenform gives an integral representation for the $L$-functions twisted by a Hecke character of $p$-power conductor. The Eisenstein series is defined such that its Fourier expansion has a natural $p$-adic interpolation. In particular, we use a different construction when we twist the $L$-function by a trivial character. This will result in the $p$-modification when constructing $p$-adic measures.

\subsection{Siegel Eisenstein series and its Fourier expansion}

We start by reviewing the Eisenstein series on $G_n(\mathbb{A})$ defined in \cite{Q,Sh99}. Denote $P_{n}\subset G_{n}$ be the Siegel parabolic subgroup consisting elements of the form $\left[\begin{array}{cc}
\ast & \ast\\
0_n & \ast
\end{array}\right]$. Fix $\kappa\in\Z$ and an odd prime $p$ splits in $\mathbb{B}$. Let $\chi$ be a Hecke character of conductor $p^{\mathfrak{c}}$ and assume $\chi_{\infty}(-1)=(-1)^{\kappa}$ in Case II. In Case I we do not need this assumption since $\det(g)\in\R_+$ for $g\in\mathrm{GL}_n(\mathbb{H})$. Let $\mathfrak{m}=(M)$ be an integral ideal of $\mathcal{O}$ such that $v_p(M)=2\mathfrak{c}$.  Let $\mu(g,s):=\prod_v\mu_v(g,s):G_{n}(\mathbb{A})\times\C\to\C$ be a function such that:
\begin{itemize}
\item{For the archimedean place, we set
\[
\mu_{\infty}(g,s)=j(g_{\infty},z_0)^{-\kappa}|j(g_{\infty},z_0)|_{\infty}^{\kappa-s}.
\]}

\item{For $v\nmid\mathfrak{m}$, we write $g=qk$ with $q=\left[\begin{array}{cc}
\hat{d}_q & b_q\\
0_n & d_q
\end{array}\right]\in P_{n}(\Q_v),k\in G_{n}(\mathcal{O}_v)$ by the Iwasawa decomposition and set 
\[
\mu_v(g,s)=\chi_v(\det d_q)^{-1}|\det d_q|_v^{-s}.
\]}

\item{For $v|\mathfrak{m}$, set 
\[
\mu_v(g,s)=\chi_v(\det d_q)^{-1}\chi_v(\det d_k)|\det d_q|_v^{-s}
\] 
if $g=qJk$ with $q=\left[\begin{array}{cc}
\hat{d}_q & b_q\\
0_n & d_q
\end{array}\right]\in P_{n}(\Q_v),k=\left[\begin{array}{cc}
a_k & b_k\\
c_k & d_k
\end{array}\right]\in K_0(\mathfrak{m})\subset G_{n}(\mathcal{O}_v)$ and $\mu_v(g,s)=0$ if $g$ is not of this form.}
\end{itemize}

We define a Siegel Eisenstein series on $G_{n}(\mathbb{A})$ by
\begin{equation}
\label{Eisenstein}
\mathbf{E}(g,s):=\mathbf{E}_{\kappa}(g,s;\chi,\mathfrak{m})=\sum_{\gamma\in P_{n}\backslash G_{n}}\mu(\gamma g,s).
\end{equation}
Clearly if we set $E(z,s)=j(g_z,z_0)^{\kappa}\mathbf{E}(g_z,s)$ for $z=g_z\cdot z_0$ then
\begin{equation}
E(z,s)=E_{\kappa}(z,s;\chi,\mathfrak{m})=\sum_{\gamma\in P_{n}\cap J\Gamma_0(M)J^{-1}\backslash J\Gamma_0(M)}\chi_{\mathfrak{m}}(\det c_{\gamma})^{-1}\delta(z)^{s-\kappa}|_{\kappa}\gamma.
\end{equation}

It converges absolutely when $\kappa>2n-1$ in Case I and $\kappa>2n+1$ in Case II. The Fourier expansions of $E(z,s)$ are discussed in \cite[Section 3]{Q},\cite[Section 5]{Sh99} and the special values of those confluent hypergeometric functions are calculated in \cite{Sh82}. We need to assume that all $l\nmid M$ splits in $\mathbb{B}$ so that the Eisenstein series has a nice Fourier expansion. We are interested in the special values at $s=\kappa$, write
\begin{equation}
\Lambda^n_M(\kappa,\chi)E_{\kappa}(z,\kappa,\chi)=\sum_{h\in\Lambda_n}a(h,y,\kappa,\chi)e(\lambda(hz)),\label{10}
\end{equation}
with $\Lambda_n=\{\tau\in S_n:\lambda(\tau s)\in\Z\text{ for any }s\in S(\mathcal{O})\}$. When $h>0$ we have\\
(Case I)
\begin{equation}
\begin{aligned}
a(h,y,\kappa,\chi)&=A_{\kappa}(n)\det(h)^{\kappa-\frac{2n-1}{2}}\prod_{l\nmid M}P_l(\chi^{\ast}(l)l^{-\kappa}),\\
A_{s}(n)&=2^{2-2n}(2\pi i)^{ns}(4D_{\mathbb{B}})^{-\frac{n(n-1)}{2}}\Gamma_n(2s)^{-1},\\
\Lambda_M^n(s,\chi)&=\prod_{i=0}^{n-1}L_M(2s-2i,\chi^2),\quad\Gamma_n(s)=\pi^{n(n-1)}\prod_{i=0}^{n-1}\Gamma(s-2i).\label{11}
\end{aligned}
\end{equation}
(Case II)
\begin{equation}
\begin{aligned}
a(h,y,\kappa,\chi)&=A_{\kappa}(n)\det(h)^{\kappa-\frac{2n+1}{2}}L_M(\kappa-n,\chi\rho_h)\prod_{l\nmid M}P_l(\chi^{\ast}(l)l^{-\kappa}),\\
A_{s}(n)&=2^{1-\frac{2n+1}{2}}(2\pi i)^{2ns}(D_{\mathbb{B}})^{-\frac{n(n+1)}{2}}\Gamma_{2n}(s)^{-1},\\
\Lambda_M^n(s,\chi)&=L_M(s,\chi)\prod_{i=1}^nL_M(2s-2i,\chi^2),\quad\Gamma_n(s)=\pi^{\frac{n(n-1)}{4}}\prod_{i=0}^{n-1}\Gamma(s-i/2).\label{12}
\end{aligned}
\end{equation}
Here $D_{\mathbb{B}}$ is the discriminant of $\mathbb{B}$, i.e. the product of non-split primes; $P_l\in\mathcal{O}[X]$ are polynomials such that for all primes $l$, $P_l=1$ if $\det(2h)\in\Z_l^{\times}$; $\rho_h$ is the character corresponding to $\Q(c^{1/2})/\Q$ with $c=(-1)^n\det(h)$. 

\begin{prop}
\label{3.1}
Keep the notation as above and $\sigma\in\mathrm{Aut}(\C)$. Assume $h>0$ then\\
(1) (Case I)
\begin{equation}
\left(\frac{a(h,y,\kappa,\chi)}{A_{\kappa}(n)}\right)^{\sigma}=\frac{a(h,y,\kappa,\chi^{\sigma})}{A_\kappa(n)}.
\end{equation}
(2) (Case II)
\begin{equation}
\left(\frac{a(h,y,\kappa,\chi)}{A_{\kappa}(n)(\pi i)^{\kappa-n}g(\chi\rho_h)}\right)^{\sigma}=\frac{a(h,y,\kappa,\chi^{\sigma})}{A_{\kappa}(n)(\pi i)^{\kappa-n}g(\chi^{\sigma}\rho^{\sigma}_h)}.
\end{equation}
Here $g(\chi)$ is the Gauss sum for $\chi$.
\end{prop}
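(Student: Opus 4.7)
The plan is to verify the transformation law factor by factor on the right-hand side of the Fourier coefficient formulas for $a(h,y,\kappa,\chi)$ in the statement. Both sides of the claimed identity contain the transcendental constant $A_\kappa(n)$; since $A_\kappa(n)$ is a rational multiple of $(2\pi i)^{2n\kappa}$ and this factor is fixed by every $\sigma \in \operatorname{Aut}(\C)$ (an even power of $2\pi i$), the proposition reduces to statements about the algebraic ratios $a/A_\kappa(n)$ (Case I) and $a/(A_\kappa(n)(\pi i)^{\kappa-n} g(\chi\rho_h))$ (Case II).

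Three types of factors appear. First, for the Euler product $\prod_{l\nmid M} P_l(\chi^{\ast}(l) l^{-\kappa})$: each $P_l \in \O[X]$ has coefficients in $\Z$ from Shimura's explicit local computations, so
\[
P_l(\chi^{\ast}(l) l^{-\kappa})^\sigma = P_l(\chi^{\ast\sigma}(l) l^{-\kappa})
\]
because $\chi^{\ast}(l)$ is a root of unity and $l^{-\kappa}\in \Q$. Since $P_l = 1$ whenever $\det(2h)\in\Z_l^{\times}$, the product is effectively finite and transforms as required. Second, the determinant factor $\det(h)^{\kappa - (2n\mp 1)/2}$ (interpreted via the positive square root, as $\det(h)\in \Q_{>0}$ for $h>0$) is a positive real algebraic number fixed by $\sigma$. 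These two factors together complete Case I.

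For Case II, the extra ingredient is the Dirichlet $L$-value $L_M(\kappa-n, \chi\rho_h)$. First one checks it is critical: the assumption $\chi_\infty(-1)=(-1)^\kappa$ combined with $\rho_h(-1) = \operatorname{sgn}((-1)^n\det(h)) = (-1)^n$ (since $\det(h)>0$) gives $(\chi\rho_h)(-1) = (-1)^{\kappa-n}$. Then the classical algebraicity theorem for Dirichlet $L$-values (Euler--Hecke--Leopoldt) says that for any primitive character $\psi$ with $\psi(-1)=(-1)^k$ and integer $k\geq 1$,
\[
\sigma\!\left(\frac{L(k,\psi) g(\bar\psi)}{(2\pi i)^k}\right) = \frac{L(k,\psi^\sigma) g(\bar{\psi^\sigma})}{(2\pi i)^k}.
\]
Applying this with $\psi = \chi\rho_h$ and $k=\kappa-n$, using $g(\psi)g(\bar\psi) = \psi(-1) c_\psi \in \Q$ to pass from $g(\bar\psi)$ to $g(\psi)$ (up to a $\sigma$-equivariant rational factor), and noting $\rho_h^\sigma = \rho_h$ since $\rho_h$ is quadratic, yields the required transformation in terms of $g(\chi\rho_h)$. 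The change from the full $L$ to the partial $L_M$ only introduces Euler factors $1-\psi(l)l^{-k} \in \Q(\psi)$ which transform as expected.

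The main bookkeeping task is aligning the sign and normalization conventions (the reduced norm and its half-integer powers in the quaternionic setting, the choice of $g(\psi)$ versus $g(\bar\psi)$, and the precise parity computation for $\chi\rho_h$) with those used in \cite{Sh82} from which the formulas are quoted; no substantial new input is required beyond classical algebraicity of special Dirichlet $L$-values.
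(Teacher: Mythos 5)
Your overall strategy---apply $\sigma$ factor by factor to the explicit formulas \eqref{11}, \eqref{12}, quoting the classical equivariance of critical Dirichlet $L$-values---is exactly what the paper does (its proof is essentially one line citing the fact $\bigl(L(k,\chi)/((\pi i)^k g(\chi))\bigr)^{\sigma}=L(k,\chi^{\sigma})/((\pi i)^k g(\chi^{\sigma}))$). Two of your justifications, however, are not correct as stated. First, $(2\pi i)^{2n\kappa}$ is \emph{not} fixed by a general $\sigma\in\mathrm{Aut}(\C)$: automorphisms of $\C$ move $\pi$ freely among transcendentals. This is harmless here, because the correct reason the proposition reduces to an algebraic statement is simply that $A_{\kappa}(n)$ cancels identically in the quotient $a(h,y,\kappa,\chi)/A_{\kappa}(n)$, leaving an expression with no $\pi$ in it at all; no invariance claim is needed.

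Second, and more substantively: a positive real algebraic number is \emph{not} fixed by $\sigma$ (e.g.\ $\sqrt{2}\mapsto-\sqrt{2}$), so your treatment of $\det(h)^{\kappa-(2n\mp1)/2}$ is incomplete. In Case I this is rescued because the reduced norm of a positive definite quaternionic hermitian matrix is the square of a rational (the Moore determinant), so the half-integral power is rational and genuinely $\sigma$-fixed --- this is worth saying, since it is why no character $\rho_h$ appears in \eqref{11}. In Case II, $\det(h)^{1/2}$ is an honest quadratic irrationality ($\Q(\det(h)^{1/2})=\Q(c^{1/2})$ up to sign of $c=(-1)^n\det(h)$), and $\sigma$ multiplies it by the sign $\rho_h(u)=\pm1$, where $\sigma$ acts on $p$-power roots of unity by $\zeta\mapsto\zeta^{u}$. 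That sign is exactly cancelled by the identical sign in $\sigma(g(\rho_h))/g(\rho_h)$ sitting inside $g(\chi\rho_h)$ in the denominator (via $g(\chi\rho_h)=\chi(c_{\rho_h})\rho_h(c_{\chi})g(\chi)g(\rho_h)$). This cancellation is the one piece of genuine content in Case II beyond the quoted $L$-value fact, and it is precisely the step you defer to ``bookkeeping''; without it your argument, taken literally, produces the right-hand side only up to the sign $\rho_h(u)$. Everything else (the integrality of the $P_l$, the finiteness of the Euler product, the parity check $(\chi\rho_h)(-1)=(-1)^{\kappa-n}$, and the passage from $L$ to $L_M$) is fine.
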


\begin{proof}
The proposition follows from the explicit formulas for $a(h,y,\kappa,\chi)$. For Case II we are using the fact that
\[
\left(\frac{L(k,\chi)}{(\pi i)^kg(\chi)}\right)^{\sigma}=\frac{L(k,\chi^{\sigma})}{(\pi i)^kg(\chi^{\sigma})}.
\]
\end{proof}

\subsection{The integral representation: nontrivial twist}

We consider the following embedding
\begin{equation}
\label{doublinggroup}
\begin{aligned}
G_n\times G_n&\to G_{2n},\\
\left[\begin{array}{cc}
a_1 & b_1\\
c_1 & d_1
\end{array}\right]\times\left[\begin{array}{cc}
a_2 & b_2\\
c_2 & d_2
\end{array}\right]&\mapsto\left[\begin{array}{cccc}
a_1 & 0 & b_1 & 0\\
0 & a_2 & 0 & b_2\\
c_1 & 0 & d_1 & 0\\
0 & c_2 & 0 & d_2
\end{array}\right].
\end{aligned}
\end{equation}
We then view $G_n\times G_n$ as a subgroup of $G_{2n}$ and denote $\gamma_1\times\gamma_2$ as the image of $(\gamma_1,\gamma_2)$ in $G_{2n}$ under the embedding. We can also define an embedding for symmetric spaces
\begin{equation}
\label{doublingdomain}
\mathcal{H}_n\times\mathcal{H}_n\to\mathcal{H}_{2n},\quad z\times w\mapsto \left[\begin{array}{cc}
z & 0\\
0 & w
\end{array}\right].
\end{equation}
This embedding is compatible with the embedding of groups in the sense that $(\gamma_1\times\gamma_2)(z\times w)=\gamma_1z\times\gamma_2w$. To ease the notation, we keep the subscript $2n$ and omit the subscript $n$ if it is clear from the context.

Let $\mathbf{f}\in S_{\kappa}(K_0(\mathfrak{n}p))$ be an eigenform with $(\mathfrak{n},p)=1$ and $\chi$ is a nontrivial Hecke character with conductor $p^{\mathfrak{c}}>1$. We are now going to define a function on $G(\mathbb{A})\times G(\mathbb{A})$ (for $s\in\C$) twisting the Siegel Eisenstein series on $G_{2n}(\mathbb{A})$ defined in \eqref{Eisenstein} . Take $\mathfrak{m}=\mathfrak{n}^2p^{2\mathfrak{c}}$ in the last subsection and define
\begin{equation}
\mathcal{E}(g,h,s):=\mathcal{E}_{\kappa}(g,h,s;\chi,\mathfrak{n}^2p^{2\mathfrak{c}})=\sum_{\gamma\in P_{2n}\backslash G_{2n}}\tilde{\mu}(\gamma(g\times h),s),
\end{equation}
where:
\begin{itemize}
\item{$\tilde{\mu}_v(g,s)=\mu_v(g,s)$ for $v\nmid\mathfrak{n}p$,}

\item{For $v|\mathfrak{n}$, set 
\begin{equation}
\tilde{\mu}_v(g,s)=\mu_v\left(g\left[\begin{array}{cccc}
1_n & 0 & 0 & |\mathfrak{n}|_v\\
0 & 1_n & -\epsilon|\mathfrak{n}|_v & 0\\
0 & 0 & 1_n & 0\\
0 & 0 & 0 & 1_n
\end{array}\right],s\right),
\end{equation}}

\item{For $v=p$, set
\begin{equation}
\tilde{\mu}_p(g,s)=\sum_{x\in\mathrm{GL}_n(\mathcal{O}_p)/p^{\mathfrak{c}}\mathrm{GL}_n(\mathcal{O}_p)}\chi_p(\det x)^{-1}\mu_p\left(g\left[\begin{array}{cccc}
1_n & 0 & 0 & \frac{x}{p^{\mathfrak{c}}}\\
0 & 1_n & \frac{-\epsilon x^{\ast}}{p^{\mathfrak{c}}} & 0\\
0 & 0 & 1_n & 0\\
0 & 0 & 0 & 1_n
\end{array}\right],s\right).
\end{equation}}
\end{itemize}
Then $\mathcal{E}(g,h,s)\in\mathcal{M}_{\kappa}(K_0(\mathfrak{n}^2p^{2\mathfrak{c}}))\otimes\mathcal{M}_{\kappa}(K_0(\mathfrak{n}^2p^{2\mathfrak{c}}))$ (holomorphic at $s=\kappa$). That is
\[
\mathcal{E}(gk_1,hk_2,s)=\mathcal{E}(g,h,s)\text{ for }k_1,k_2\in K_0(\mathfrak{n}^2p^{2\mathfrak{c}}).
\]

\begin{rem}
In next section, we will define the classical counterpart $\mathfrak{E}(z,w,s)$ of $\mathcal{E}_{\kappa}(g,h,s)$. One sees that the function $\mathcal{E}_{\kappa}$ constructed here is same as the twisted Eisenstein series constructed in \cite[Section 2]{BS} except the sections at $v|\mathfrak{n}$. At such places, both in \cite{BS} and here the local sections $\tilde{\mu}_v$ are chosen such that it only contributes constants in inner product formula, i.e. all Euler factors at $v|\mathfrak{n}$ are removed (see lemma \ref{3.5}). Hence the inner product formula obtained here in Proposition \ref{prop 3.6}, \ref{prop 3.8} is same as the one in \cite{BS} up to constants.
\end{rem}

Denote $h^c=\left[\begin{array}{cc}
-1_n & 0\\
0 & 1_n
\end{array}\right]h\left[\begin{array}{cc}
-1_n & 0\\
0 & 1_n
\end{array}\right]$ and consider the integral
\begin{equation}
\int_{G(\Q)\backslash G(\mathbb{A})}\mathcal{E}(g,h^c,s)\overline{\mathbf{f}(g)}\mathbf{d}g.\label{22}
\end{equation}
We compute this integral using the well known `doubling method' strategy (see for example \cite{Sh95,Sh00}). That is the function $\mathcal{E}$ can be decomposed as a sum according to the coset decomposition of $P_{2n}(\Q)\backslash G_{2n}(\Q)/G(\Q)\times G(\Q)$. By the cuspidality of $\mathbf{f}$, only the coset given by
\[
\tau=\left[\begin{array}{cccc}
1 & 0 & 0 & 0\\
0 & 1 & 0 & 0\\
0 & 1 & 1 & 0\\
-\epsilon & 0 & 0 & 1
\end{array}\right],
\]
with $\epsilon=-1$ in Case I and $\epsilon=1$ in Case II can contribute to above integral \eqref{22}. In this case we obtain
\[
\eqref{22}=\int_{G(\mathbb{A})}\tilde{\mu}\left(\tau\left(g\times \left[\begin{array}{cc}
0 & \epsilon\\
1 & 0
\end{array}\right]\right),s\right)\overline{\mathbf{f}(hg)}\mathbf{d}g.
\]
We discuss the above integral locally case by case in following lemmas. 

\begin{lem}
Assume $\mathrm{Re}(s)> 4n+1-\kappa$ then
\begin{equation}
\int_{G(\R)}\tilde{\mu}_{\infty}\left(\tau\left(g\times \left[\begin{array}{cc}
0 & \epsilon\\
1 & 0
\end{array}\right]\right),s\right)\overline{\mathbf{f}(hg)}\mathbf{d}g=c_{\kappa}(s)\overline{\mathbf{f}(h)}\label{23}
\end{equation}
with\\
(Case I)
\[
c_{\kappa}(s)=\pi^{n(2n-1)}\prod_{i=0}^{n-1}\frac{\Gamma(s+\kappa-2i)}{\Gamma(s+\kappa+n-2i)},
\]
(Case II)
\[
c_{\kappa}(s)=\pi^{n(2n+1)}\prod_{i=0}^{2n-1}\frac{\Gamma\left(\frac{s+\kappa-i}{2}\right)}{\Gamma\left(\frac{s+\kappa+2n-i}{2}\right)}.
\]
\end{lem}

\begin{proof}
Denote $z=g\cdot z_0$ and $f(z)=j(g,z_0)^{\kappa}\mathbf{f}(g)$. The integral in \eqref{23} can be rewritten as
\[
\int_{\mathcal{H}}\det(z+z_0)^{-\kappa}|\det(z+z_0)|_{\infty}^{\kappa-s}\delta(z)^{\kappa+s}\overline{j(h,z)^{-\kappa}f(hz)}\mathbf{d}z.
\]
This kind of integral is calculated by the same method as \cite[Propostion A.2.9]{Sh97}. We remind the reader that our function $\delta$ here is actually $\delta^{1/2}$ there. There exists a constant $c_{\kappa}(s)$ independent of $f$ such that above integral equals 
\[
c_{\kappa}(s)\overline{j(h,z_0)^{-\kappa}f(hz_0)}=c_{\kappa}(s)\overline{\mathbf{f}(h)}.
\]
The constant $c_{\kappa}(s)$ is simply determined by taking $z=0,f=1$. That is 
\[
c_{\kappa}(s)=\int_{\mathcal{H}}\delta(z)^{{\kappa}+s}\mathbf{d}z=\int_{\mathcal{H}}\delta(z)^{\kappa+s-2n-\epsilon}dz.
\]
By changing our symmetric space to bounded symmetric space by Cayley transform, this constant $c_{\kappa}(s)$ is calculated in \cite[Theorem 2.4.1]{Hua} for Case I and \cite[Theorem 2.3.1]{Hua} for Case II. 
\end{proof}

To ease the notation, we only discuss Case I in following two lemmas and omit the same proof for Case II.

\begin{lem}
Let $v\nmid\mathfrak{m}$ corresponds to a prime $l$, then
\begin{equation}
\int_{G(\Q_l)}\mu_l(\tau(g\times J),s)\overline{\mathbf{f}(hg)}\mathbf{d}g=\Lambda^{2n}_{l}(s,\chi)^{-1}L^{(l)}(s,\mathbf{f},\chi)\overline{\mathbf{f}(h)}.\label{24}
\end{equation}
\end{lem}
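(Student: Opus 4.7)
The integral is the local doubling zeta integral at an unramified place, which is the ``basic identity'' of the doubling method and is by now standard (see e.g.\ \cite{Sh00} Section 22 or \cite{Sh97} Theorems 16.2--16.6, adapted to the quaternionic setting). The plan has three steps.

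First, I would exploit the hypothesis that $l\nmid Np$ splits in $\mathbb{B}$ to identify $G(\Q_l)$ with $\SO(n,n)(\Q_l)$ in Case I or $\Sp(2n)(\Q_l)$ in Case II, and $K_v$ with the standard hyperspecial maximal compact. Under this identification $\mathbf{f}$ is spherical with Satake parameters $\alpha_{1,l},\ldots,\alpha_{2n,l}$ as in Proposition \ref{prop 2.3}, and $\mu_l(\cdot,s)$ is the spherical section of the degenerate principal series of $G_{2n}(\Q_l)$ normalized by $\mu_l(1,s)=1$. Since $\tau$ has integral entries, $\tau\in K_{2n,v}$, so by right $K_{2n,v}$-invariance of $\mu_l$ we are free to average over right translations by $K_v$ on the $G_n\times G_n$ side.

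Second, I would perform the Iwasawa decomposition $G(\Q_l)=P(\Q_l)K_v$ and use the right $K_v$-invariance of $\mathbf{f}$ to reduce the integral to one over $P(\Q_l)=M(\Q_l)N(\Q_l)$ with $M\cong\GL_n(\mathbb{B}_l)$ and $N\cong S(\Q_l)$. Explicit matrix computation of $\tau(g\times J)$ followed by a further Iwasawa decomposition in $G_{2n}(\Q_l)$ pins down the $P_{2n}$-component, and in particular identifies the ``$d$-part'' whose reduced determinant governs $\mu_l$. The integration over the unipotent radical $N$ gives a convergent local factor which, after suitable change of variables on $M$, accounts for the normalization $\Lambda^{2n}_l(s,\chi)^{-1}$ (this is the local analogue of the well-known computation that produces $\prod_i L(2s-2i,\chi^2)^{-1}$ in the doubling method).

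Third, I would evaluate the remaining integral over $M(\Q_l)\cong\GL_n(\mathbb{B}_l)$ by the Cartan decomposition $\GL_n(\mathbb{B}_l)=\bigsqcup_w \GL_n(\mathcal{O}_l)\,w\,\GL_n(\mathcal{O}_l)$. Combined with the coset decomposition of Lemma \ref{lem2.2}(2) and the Hecke eigenvalue identity $\mathbf{f}\vert[K_0(\mathfrak{n})\xi K_0(\mathfrak{n})]=\lambda_{\mathbf{f}}(\xi)\mathbf{f}$ for $\xi=\mathrm{diag}(\hat{w},w)$, the integral transforms into
\[
\overline{\mathbf{f}(h)}\sum_{w}\lambda_{\mathbf{f}}(\xi_w)\chi_l^{\ast}(\det w)|\det w|_l^{-s},
\]
which is precisely the local factor of the Dirichlet series defining $L(s,\mathbf{f},\chi)$ in subsection 2.4. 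Comparing with the Euler product in Proposition \ref{prop 2.3} and dividing by $\Lambda^{2n}_l(s,\chi)$ yields the claim. The main obstacle is the bookkeeping in the second step: controlling the exact way $\tau$ acts on the Iwasawa decomposition of $g\times J\in G_{2n}(\Q_l)$ so that the weight $\chi_l(\det w)^{-1}|\det w|_l^{-s}$ emerges in its correct normalization. This is the quaternionic analogue of Shimura's calculation, and the splitting hypothesis on $l$ is what makes his formulas directly applicable.
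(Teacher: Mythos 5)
Your outline is essentially sound and arrives at the same key identity as the paper: the local integral equals the local Hecke--Dirichlet series $\sum_{\xi}\lambda_{\mathbf{f}}(\xi)\chi^{\ast}(\det w)\det(w)^{-s}$, which equals $\Lambda^{2n}_{l}(s,\chi)^{-1}L^{(l)}(s,\mathbf{f},\chi)$ by the very definition of $L(s,\mathbf{f},\chi)$ together with Proposition \ref{prop 2.3}. The organization differs, though: the paper never passes through the Iwasawa decomposition. It uses the Cartan decomposition $G(\Q_l)=\coprod G(\mathcal{O}_l)\,\mathrm{diag}[k^{-1}_{e_1,\dots,e_n},k_{e_1,\dots,e_n}]\,G(\mathcal{O}_l)$ of the full group, observes that $g\mapsto\mu_l(\tau(g\times J),s)$ is bi-$G(\mathcal{O}_l)$-invariant with value $\chi_l(\det k)\det(k)^{-s}$ on each cell, and recognizes the remaining sum over right cosets as the Hecke operator $T_{e_1,\dots,e_n}$ applied to $\mathbf{f}$. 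Your route through $P(\Q_l)K_v$ together with Lemma \ref{lem2.2}(2) enumerates exactly the same right cosets (the pairs $(a,s)$ there are the single-coset representatives inside the Hecke double cosets), so it lands in the same place; it buys nothing extra here and costs a little, since you must justify interchanging the noncompact unipotent integration with the matrix coefficient before the support condition imposed by the section cuts it down to a finite sum over $s\in S(\Q_l)/S(\mathcal{O}_l)$.

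The one point you must fix is the accounting of $\Lambda^{2n}_{l}(s,\chi)^{-1}$. In your second step you assert that the unipotent integration ``accounts for'' this factor, and in your third step you again divide by $\Lambda^{2n}_{l}(s,\chi)$ after identifying the remaining $M$-integral with the Dirichlet series; if both were literally true the answer would be $\Lambda^{2n}_{l}(s,\chi)^{-2}L^{(l)}(s,\mathbf{f},\chi)$. In this paper's normalization the factor is definitional: $L(s,\mathbf{f},\chi)$ is \emph{defined} as $\Lambda^{2n}_{N}(s,\chi)$ times the Dirichlet series, so once the integral is shown to equal the local Dirichlet series you are finished. The unipotent sum contributes only the coset count that is already part of the Hecke eigenvalue; the analogy with the computation producing $\prod_iL(2s-2i,\chi^2)^{-1}$ in the classical doubling method is misplaced at this stage, since that identity is the content of the Euler product in Proposition \ref{prop 2.3}, not of the local zeta integral itself.
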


\begin{proof}
Recall the Cartan decomposition
\[
G(\Q_l)=\coprod_{\substack{e_1,...,e_n\in\Z\\
0\leq e_1\leq...\leq e_n}}G(\mathcal{O}_l)\mathrm{diag}[k^{-1}_{e_1,...,e_n},k_{e_1,...,e_n}]G(\mathcal{O}_l),k_{e_1,...,e_n}=\mathrm{diag}[l^{e_1},...,l^{e}_n].
\]
Denote the Hecke operator corresponds to $\mathrm{diag}[k^{-1}_{e_1,...,e_n},k_{e_1,...,e_n}]$ by $T_{e_1,...,e_n}$. The integral on the left hand side of \eqref{24} can be written as
\[
\begin{aligned}
&\sum_{\substack{e_1,...,e_n\in\Z\\
0\leq e_1\leq...\leq e_n}}\mu_l(\tau(\mathrm{diag}[k_{e_1,...,e_n}^{-1},k_{e_1,...,e_n}]\times J))\sum_{k\in G(\mathcal{O}_v)\mathrm{diag}[k_{e_1,...,e_n}^{-1},k_{e_1,...,e_n}]G(\mathcal{O}_v)/G(\mathcal{O}_v)}\overline{\mathbf{f}(hk)}\\
=&\sum_{\substack{e_1,...,e_n\in\Z\\
0\leq e_1\leq...\leq e_n}}\chi_l(\det k_{e_1,...,e_n})\det(k_{e_1,...,e_n})^{-s}\overline{T_{e_1,...,e_n}\mathbf{f}(h)}\\
=&\Lambda^{2n}_{v}(s,\chi)^{-1}L^{(l)}(s,\mathbf{f},\chi)\overline{\mathbf{f}(h)}.
\end{aligned}
\]

\end{proof}

\begin{lem}
\label{3.5}
$\text{ }$\\
(1) Denote $\Q_{\mathfrak{n}}=\prod_{v|\mathfrak{n}}\Q_v$ and $\mathcal{O}_{\mathfrak{n}}=\prod_{v|\mathfrak{n}}\mathcal{O}_v$ then
\begin{equation}
\int_{G(\Q_{\mathfrak{n}})}\tilde{\mu}_{\mathfrak{n}}(\tau(g\times J),s)\overline{\mathbf{f}(hg)}\mathbf{d}g=\chi_{\mathfrak{n}}(\mathfrak{n})^{-2}\mathfrak{n}^{2ns}\mathrm{vol}(K^1_{\mathfrak{n}})\overline{\mathbf{f}(h\mathrm{diag}[\mathfrak{n}^{-1},\mathfrak{n}]J)}.\label{25}
\end{equation}
Here, for any integer $\mathfrak{n}$ we denote
\[
K^1_{\mathfrak{n}}=\left\{\gamma=\left[\begin{array}{cc}
a & b\\
c & d
\end{array}\right]\in G(\mathcal{O}_{\mathfrak{n}}):a,d\equiv 1\text{ mod }\mathfrak{n},c\equiv0\text{ mod }\mathfrak{n}^2\right\}.
\]
(2) 
\begin{equation}
\begin{aligned}
&\int_{G(\Q_p)}\tilde{\mu}_p(\tau(g\times J),s)\overline{\mathbf{f}(hg)}\mathbf{d}g\\
=&p^{2\mathfrak{c}ns}\mathrm{vol}(\mathrm{GL}_n(\mathcal{O}_p)/p^{\mathfrak{c}}\mathrm{GL}_n(\mathcal{O}_p))\mathrm{vol}(K_{p^{\mathfrak{c}}}^1)\overline{\mathbf{f}(h\mathrm{diag}[p^{-\mathfrak{c}},p^{\mathfrak{c}}]J)}.\label{26}
\end{aligned}
\end{equation}
\end{lem}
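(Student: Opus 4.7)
The plan is to exploit the support condition on $\mu_v$ together with the right $K_0(\mathfrak{n}p)$-invariance of $\mathbf{f}$. Recall that $\mu_v$ vanishes off $P(\Q_v) J K_0(\mathfrak{n}^2 p^{2\mathfrak{c}})$, and on this support is given by an explicit character times a power of $|\det d_P|_v$. In both parts, the matrices used to twist $\mu_v$ into $\tilde{\mu}_v$ are Weyl-type shifts designed precisely so that, after composing with $\tau(\,\cdot\, \times J)$, the support of the resulting function on $G(\Q_v)$ collapses to a single right coset of a small congruence subgroup, with distinguished representative $\mathrm{diag}[\mathfrak{n}^{-1},\mathfrak{n}]J$ (resp.\ $\mathrm{diag}[p^{-\mathfrak{c}},p^{\mathfrak{c}}]J$).

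For part (1), I would first compute the matrix product $\tau \cdot (g \times J) \cdot \eta_{\mathfrak{n}}$, where $\eta_{\mathfrak{n}}$ denotes the shift matrix appearing in the definition of $\tilde{\mu}_{\mathfrak{n}}$. Writing $g$ in block form and performing a direct Iwahori-type factorization, the product lies in $P_{2n}(\Q_v) J K_0(\mathfrak{n}^2 p^{2\mathfrak{c}})$ if and only if $g \in \mathrm{diag}[\mathfrak{n}^{-1},\mathfrak{n}] J \cdot K^1_{\mathfrak{n}}$. On this coset the $\mu_v$-weight simplifies to the constant $\chi_{\mathfrak{n}}(\mathfrak{n})^{-2}\mathfrak{n}^{2ns}$. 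Since $\mathbf{f}$ is right $K_0(\mathfrak{n}p)$-invariant and $K^1_{\mathfrak{n}} \subset K_0(\mathfrak{n}p)$, the value $\overline{\mathbf{f}(hg)}$ equals $\overline{\mathbf{f}(h\,\mathrm{diag}[\mathfrak{n}^{-1},\mathfrak{n}] J)}$ throughout this coset, and integrating the constant against $\mathbf{d}g$ contributes the volume $\mathrm{vol}(K^1_{\mathfrak{n}})$.

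Part (2) proceeds in parallel, but with the extra finite sum over $x \in \mathrm{GL}_n(\mathcal{O}_p)/p^{\mathfrak{c}}\mathrm{GL}_n(\mathcal{O}_p)$ weighted by $\chi_p(\det x)^{-1}$. For each fixed $x$, the same support analysis pins $g$ to a single right $K^1_{p^{\mathfrak{c}}}$-coset whose representative is an $x$-dependent right translate of $\mathrm{diag}[p^{-\mathfrak{c}},p^{\mathfrak{c}}] J$; however, the $x$-dependence only enters through upper-unipotent entries, which are absorbed by the right $K_0(\mathfrak{n}p)$-invariance of $\mathbf{f}$. Moreover, the factor $\chi_p(\det x)^{-1}$ cancels exactly against the corresponding factor $\chi_p(\det d_P)^{-1}$ produced by $\mu_p$ on the support. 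Thus every summand contributes the same quantity $p^{2\mathfrak{c}ns}\mathrm{vol}(K^1_{p^{\mathfrak{c}}}) \overline{\mathbf{f}(h\,\mathrm{diag}[p^{-\mathfrak{c}},p^{\mathfrak{c}}] J)}$, and the $x$-sum produces the overall factor $\mathrm{vol}(\mathrm{GL}_n(\mathcal{O}_p)/p^{\mathfrak{c}}\mathrm{GL}_n(\mathcal{O}_p))$.

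The main obstacle is the explicit matrix computation needed to identify the support in both parts: one has to carry out the Bruhat/Iwahori factorization of the $4n \times 4n$ matrix $\tau(g \times J)\eta$, with its two compatible block decompositions, and solve the resulting congruences pinning down $g$. Once this is in hand, everything else — tracking the characters, the powers of $|\det|_v$, and the Haar-measure volumes — is mechanical. A secondary subtlety in part (2) is the precise cancellation of the weights $\chi_p(\det x)^{-1}$, which one verifies directly from the explicit form of the $P$-decomposition of the shifted matrix after the support has been pinned down.
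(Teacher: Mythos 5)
Your proposal is correct and follows essentially the same route as the paper: the paper likewise reduces everything to the support condition on $\mu_v$, computes the $4n\times 4n$ block matrix $\tau(g\times 1)\tau^{-1}d^{-1}J$ explicitly to pin the support down to a single coset of $K^1_{\mathfrak{n}}$ (resp.\ $K^1_{p^{\mathfrak{c}}}$) with representative $\mathrm{diag}[\mathfrak{n}^{-1},\mathfrak{n}]J$ (resp.\ $\mathrm{diag}[p^{-\mathfrak{c}},p^{\mathfrak{c}}]J$), and then reads off the constant weight, the volume, and the cancellation of $\chi_p(\det x)^{-1}$ against the $\chi_p(\det d_P)^{-1}$ factor exactly as you describe. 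The only difference is organizational (the paper performs a change of variables $g\mapsto gJ^{-1}\mathrm{diag}[\mathfrak{n},\mathfrak{n}^{-1}]$ before the support analysis, whereas you keep the shift on the right), which does not affect the argument.
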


\begin{proof}
Note that
\[
\tilde{\mu}_{\mathfrak{n}}(\tau (g\times J),s)=\mu_{\mathfrak{n}}(\tau(gJ^{-1}\mathrm{diag}[\mathfrak{n},\mathfrak{n}^{-1}]\times 1)\tau^{-1}d_{\mathfrak{n}}^{-1}J,s)
\]
with $d_{\mathfrak{n}}=\mathrm{diag}[\mathfrak{n}\cdot 1_n,1_n,\mathfrak{n}^{-1}\cdot 1_n,1_n]$. Changing variables of the integral in \eqref{25} and consider
\[
\int_{G(\Q_{\mathfrak{n}})}\mu(\tau(g\times 1)\tau^{-1}d_{\mathfrak{n}}^{-1}J,s)\overline{\mathbf{f}(hg\mathrm{diag}[\mathfrak{n}^{-1},\mathfrak{n}]J)}\mathbf{d}g.
\]
Let $g=\left[\begin{array}{cc}
a & b\\
c & d
\end{array}\right]$, then 
\[
\tau(g\times 1)\tau^{-1}d_{\mathfrak{n}}^{-1}J=\left[\begin{array}{cccc}
b\mathfrak{n} & 0 & -a\mathfrak{n}^{-1} & b\\
0 & 0 & 0 & -1\\
d\mathfrak{n} & 0 & -c\mathfrak{n}^{-1} & d-1\\
b\mathfrak{n} & 1 & -(a-1)\mathfrak{n}^{-1} & b
\end{array}\right].
\]
By the definition of $\mu$, the integrand is nonzero unless $d$ is invertible and
\[
\left[\begin{array}{cc}
d\mathfrak{n} & 0\\
b\mathfrak{n} & 1
\end{array}\right]^{-1}\left[\begin{array}{cc}
-c\mathfrak{n}^{-1} & d-1\\
(-a+1)\mathfrak{n}^{-1} & b
\end{array}\right]\in M_{2n}(\mathcal{O}_{\mathfrak{n}}),
\]
which implies $g\in K^1_{\mathfrak{n}}$. Hence the integral equals
\[
\int_{K^1_{\mathfrak{n}}}\chi_{\mathfrak{n}}(\mathfrak{n})^{-2}\mathfrak{n}^{2ns}\overline{\mathbf{f}(hg\mathrm{diag}[\mathfrak{n}^{-1},\mathfrak{n}]J)}\mathbf{d}g=\chi_{\mathfrak{n}}(\mathfrak{n})^{-2}\mathfrak{n}^{2ns}\mathrm{vol}(K^1_{\mathfrak{n}})\overline{\mathbf{f}(h\mathrm{diag}[\mathfrak{n}^{-1},\mathfrak{n}]J)}.
\]
For (2) note that
\[
\tilde{\mu}_p(\tau(g\times 1),s)=\sum_{x\in\mathrm{GL}_n(\mathcal{O}_p)/p^{\mathfrak{c}}\mathrm{GL}_n(\mathcal{O}_p)}\chi(\det x)^{-1}\mu_p(\tau(gJ^{-1}\mathrm{diag}[p^{\mathfrak{c}},p^{-\mathfrak{c}}]\times 1)\tau^{-1}d_{p}^{-1}J,s),
\]
with $d_p=\mathrm{diag}[p^{\mathfrak{c}}x^{-1},1_n,p^{-\mathfrak{c}}x,1_n]$. Changing variables of the integral in \eqref{26} we obtain
\[
\begin{aligned}
&\sum_{x\in\mathrm{GL}_n(\mathcal{O}_p)/p^{\mathfrak{c}}\mathrm{GL}_n(\mathcal{O}_p)}\chi(\det x)^{-1}\int_{G(\Q_p)}\mu_p(\tau(g\times 1)\tau^{-1}d_{\mathfrak{n}}^{-1}J,s)\overline{\mathbf{f}(hg\mathrm{diag}[p^{-\mathfrak{c}},p^{\mathfrak{c}}]J)}\mathbf{d}g\\
=&\sum_{x\in\mathrm{GL}_n(\mathcal{O}_p)/p^{\mathfrak{c}}\mathrm{GL}_n(\mathcal{O}_p)}\chi(\det x)^{-1}\cdot p^{\mathfrak{c}ns}\mathrm{vol}(K_{p^{\mathfrak{c}}}^1)\chi(\det x)\overline{\mathbf{f}(h\mathrm{diag}[p^{-\mathfrak{c}},p^{\mathfrak{c}}]J)}\\
=&p^{2\mathfrak{c}ns}\mathrm{vol}(\mathrm{GL}_n(\mathcal{O}_p)/p^{\mathfrak{c}}\mathrm{GL}_n(\mathcal{O}_p))\mathrm{vol}(K_{p^{\mathfrak{c}}}^1)\overline{\mathbf{f}(h\mathrm{diag}[p^{-\mathfrak{c}},p^{\mathfrak{c}}]J)}.
\end{aligned}
\]
\end{proof}

Denote 
\[
\Omega^{\mathfrak{c}}_{\kappa}(s)=\mathfrak{n}^{2ns}p^{2\mathfrak{c}ns}\chi_{\mathfrak{n}}(\mathfrak{n})^{-2}\mathrm{vol}(K_{\mathfrak{np^{\mathfrak{c}}}}^1)\mathrm{vol}(\mathrm{GL}_n(\mathcal{O}_p)/p^{\mathfrak{c}}\mathrm{GL}_n(\mathcal{O}_p))c_{\kappa}(s)
\]
for the product of all constants in above lemmas. We summarize above discussions in the following proposition.

\begin{prop}
Let $\mathbf{f}\in S_{\kappa}(K_0(\mathfrak{n}p))$ be an eigenform with $(\mathfrak{n},p)=1$ and $\chi$ is a nontrivial Hecke character of conductor $p^{\mathfrak{c}}>1$. Then
\begin{equation}
\int_{G(\Q)\backslash G(\mathbb{A})}\mathcal{E}(g,h^c,s)\overline{\mathbf{f}\left(g\left[\begin{array}{cc}
0 & \epsilon\mathfrak{n}^{-1}p^{-\mathfrak{c}}\\
\mathfrak{n}p^{\mathfrak{c}} & 0
\end{array}\right]\right)}\mathbf{d}g=\frac{\Omega_{\kappa}^{\mathfrak{c}}(s)}{\Lambda^{2n}_{\mathfrak{n}p}(s,\chi)}L_{\mathfrak{np}}(s,\mathbf{f},\chi)\overline{\mathbf{f}\left(h\right)}.
\label{27}
\end{equation}
\label{prop 3.6}
\end{prop}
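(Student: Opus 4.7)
The plan is to apply the doubling-method unfolding outlined in the paragraphs preceding Lemma \ref{3.5} and then assemble the four local computations from the preceding lemmas. First, I will check that the matrix $M := \left[\begin{smallmatrix} 0 & \epsilon\mathfrak{n}^{-1}p^{-\mathfrak{c}} \\ \mathfrak{n}p^{\mathfrak{c}} & 0 \end{smallmatrix}\right]$ is a rational element of $G$ (equivalently of $\tilde G$): a direct calculation using $M^{2}=\epsilon\cdot 1_{2n}$ yields $MJ_{n}M^{\ast}=J_{n}$ in Case I, with the analog in Case II. Consequently $g\mapsto \mathbf{f}(gM)$ is again a cusp form (of a conjugate level), and the doubling-method unfolding applies to it: cuspidality of $\mathbf{f}$ eliminates every $(G(\Q)\times G(\Q))$-orbit on $P_{2n}(\Q)\backslash G_{2n}(\Q)$ except the open one represented by $\tau$, reducing the left-hand side of \eqref{27} to
\[
\int_{G(\mathbb{A})}\tilde{\mu}\!\left(\tau\!\left(g\times\left[\begin{smallmatrix} 0 & \epsilon \\ 1 & 0 \end{smallmatrix}\right]\right),s\right)\overline{\mathbf{f}(hgM)}\,\mathbf{d}g.
\]

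Next, I factorize this adelic integral as a product of local integrals via $\tilde{\mu}=\prod_{v}\tilde{\mu}_{v}$ and apply the four preceding lemmas place by place. The archimedean integral contributes $c_{\kappa}(s)$; the unramified factors at $v\nmid\mathfrak{n}p$ assemble, via the Satake parameter formulas of Proposition \ref{prop 2.3}, into $\Lambda^{2n}_{\mathfrak{n}p}(s,\chi)^{-1}L_{\mathfrak{n}p}(s,\mathbf{f},\chi)$; Lemma \ref{3.5}(1) contributes at each $v\mid\mathfrak{n}$ the constant $\chi_{\mathfrak{n}}(\mathfrak{n})^{-2}\mathfrak{n}^{2ns}\Vol(K^{1}_{\mathfrak{n}})$ together with a local right-shift $\diag[\mathfrak{n}^{-1},\mathfrak{n}]J$; and Lemma \ref{3.5}(2) contributes at $v=p$ the constant $p^{2\mathfrak{c}ns}\Vol(\GL_{n}(\mathcal{O}_{p})/p^{\mathfrak{c}}\GL_{n}(\mathcal{O}_{p}))\Vol(K^{1}_{p^{\mathfrak{c}}})$ together with the shift $\diag[p^{-\mathfrak{c}},p^{\mathfrak{c}}]J$.

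The main technical step — and, I expect, the principal obstacle — is then verifying that $M$ has been designed precisely so that the adele $t$ defined by these combined shifts (and trivial outside $\mathfrak{n}p$) satisfies $tM\in K_{0}(\mathfrak{n}p)\cdot M$, in the sense that $(tM)_{v}\in K_{0}(\mathfrak{n}p)_{v}$ at every finite place $v$ while $(tM)_{\infty}=M$ is a global element absorbable by left $G(\Q)$-invariance. The numerical input making this work is the coprimality of $\mathfrak{n}$ and $p$: $p^{\pm\mathfrak{c}}$ is a unit at $v\mid\mathfrak{n}$ and $\mathfrak{n}^{\pm 1}$ is a unit at $p$, so the diagonal products at the ramified places are unit-diagonal matrices, hence land in $K_{0}(\mathfrak{n}p)_{v}$. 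The right $K_{0}(\mathfrak{n}p)$-invariance of $\mathbf{f}$ at finite places combined with left $G(\Q)$-invariance then collapse $\overline{\mathbf{f}(htM)}$ to $\overline{\mathbf{f}(h)}$, and bundling all accumulated constants into $\Omega^{\mathfrak{c}}_{\kappa}(s)$ yields \eqref{27}.
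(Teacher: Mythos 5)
Your proof is correct and coincides with the paper's own argument: Proposition \ref{prop 3.6} is presented there as a mere summary of the doubling unfolding together with the three local evaluations (the archimedean lemma, the unramified lemma, and Lemma \ref{3.5}), which is exactly the route you take, and your check that $(\mathfrak{n},p)=1$ forces each product of a local shift with $M$ into $K_0(\mathfrak{n}p)_v$ is precisely the implicit bookkeeping behind the choice of $M$. The one clause to treat with care is the disposal of the archimedean component of $tM$: it is a single local component rather than a global element, so it is not literally cancelled by left $G(\Q)$-invariance alone, but the paper makes the identical silent collapse to $\overline{\mathbf{f}(h)}$, so this does not distinguish your argument from the source.
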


\subsection{The modification at $p$}

Recall that $p$ is a fixed odd prime split in $\mathbb{B}$, say $p\mathcal{O}=\mathfrak{pp}'$. Denote $\varpi$ for a uniformizer of $\mathfrak{p}$. Let $\mathbf{f}\in S_{\kappa}(K_0(\mathfrak{n}p))$ be an eigenform with $(\mathfrak{n},p)=1$ and $\chi=1$ is the trivial character. 

For $0\leq i\leq n$ and $r=0$ or $1$ denote $T(p_{i,r})$ for the Hecke operator given by double coset
\[
[K_0(p)\xi K_0(p)],\quad\xi=\mathrm{diag}[\hat{w},w],\quad w=\left[\begin{array}{ccc}
1_{n-i-1} & 0 & 0\\
0 & {\varpi}^r & 0\\
0 & 0 & p\cdot 1_i
\end{array}\right].
\]
Suppose there is a double coset decomposition
\[
\mathrm{GL}_n(\mathcal{O}_p)w\mathrm{GL}_n(\mathcal{O}_p)=\coprod_j\mathrm{GL}_n(\mathcal{O}_p)\delta_{rij}.
\]
Define 
\begin{equation}
\mathcal{E}(g,h,s,i,r)=\sum_{\gamma\in P_{2n}\backslash G_{2n}}\mu^{i,r}(\gamma(g\times h),s)
\end{equation}
with $\mu_v^{i,r}=\tilde{\mu}_v$ as in the last subsection for $v\neq p$ and
\begin{equation}
\mu_p^{i,r}(g,s)=\sum_j\sum_{x\in pM_n(\mathcal{O}_p)\hat{\delta}_{rij}/pM_n(\mathcal{O}_p)}\mu_p\left(g\left[\begin{array}{cccc}
1_n & 0 & 0 & \frac{x}{p}\\
0 & 1_n & -\frac{\epsilon x^{\ast}}{p} & 0\\
0 & 0 & 1_n & 0\\
0 & 0 & 0 & 1_n
\end{array}\right],s\right).
\end{equation}

\begin{lem}
Let $\lambda_{2i+r}$ be the Hecke eigenvalues of $\mathbf{f}$ under $T(p_{i,r})$, i.e. $T(p_{i,r})\mathbf{f}=\lambda_{2i+r}\mathbf{f}$. Then
\begin{equation}
\begin{aligned}
&\int_{G(\Q_p)}\mu_p^{i,r}\left(\tau\left(g\times\left[\begin{array}{cc}
0 & \epsilon\\
1 & 0
\end{array}\right]\right),s\right)\overline{\mathbf{f}\left(hg\left[\begin{array}{cc}
0 & \epsilon p^{-1}\\
p & 0
\end{array}\right]\right)}\mathbf{d}g\\
=&p^{s(2i+r)}L^{(p)}(s,\mathbf{f})\lambda_{2n-2i-r}\overline{\mathbf{f}\left(h\right)}.
\end{aligned}
\end{equation}
\end{lem}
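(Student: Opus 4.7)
The plan is to parallel the proof of Lemma~\ref{3.5}(2), with the character twist $\sum_x \chi_p(\det x)^{-1}$ there replaced by the sum over the Iwahori--Hecke coset data $(\delta_{rij}, x)$ that defines $\mu_p^{i,r}$. First I would expand $\mu_p^{i,r}$ by its definition as a double sum indexed by $j$ and $x\in pM_n(\mathcal{O}_p)\hat\delta_{rij}/pM_n(\mathcal{O}_p)$. For each summand, a block-matrix computation of the same shape as in the proof of Lemma~\ref{3.5}(2) rewrites the argument of $\mu_p$, modulo left multiplication by $P_{2n}(\Q_p)$, in the form $\tau(g u_{j,x}\times 1)\tau^{-1} d_{j,x} J$, where $u_{j,x} \in \tilde G_n(\Q_p)$ represents the Iwahori--Hecke coset attached to $(j,x)$ and $d_{j,x}$ is a diagonal element whose $\mu_p$-value contributes the power $p^{s(2i+r)} = |\det w|_p^{-s}$.

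Second, I would change variables $g \mapsto g u_{j,x}^{-1}$ and invoke the support condition of $\mu_p$ to force $g$ to lie in an Iwahori-type open compact subgroup of $G(\Q_p)$. The remaining Haar integral then factors as a local volume times $\overline{\mathbf{f}(h u_{j,x} \eta)}$, where $\eta$ denotes the element sitting to the right of $g$ in the integrand of the statement. Summing over $(j,x)$ reconstructs the action of the Hecke double coset $[K_0(\mathfrak{n}p)_p\, \mathrm{diag}[\hat w, w]\, K_0(\mathfrak{n}p)_p]$ on $\overline{\mathbf{f}(h)}$, which by the duality between $w$ and $\hat w$ on the Iwahori--Hecke algebra yields the eigenvalue $\lambda_{2n-2i-r}$ rather than $\lambda_{2i+r}$.

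Third, the ``main'' part of the $\mu_p$-integration---the Iwasawa contribution that remains after the sum over $x$ is factored out---reproduces exactly the local zeta integral evaluated in the proof of Proposition~\ref{prop 2.3}, contributing the local $L$-factor $L^{(p)}(s,\mathbf{f})$ at the split prime $p$. Combining the three factors gives the claimed formula.

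The main obstacle will be the careful bookkeeping that verifies (i) that the joint index set $\{(j,x): x\in pM_n(\mathcal{O}_p)\hat\delta_{rij}/pM_n(\mathcal{O}_p)\}$ indeed exhausts the coset decomposition of $[K_0(\mathfrak{n}p)_p\, \mathrm{diag}[\hat w, w]\,K_0(\mathfrak{n}p)_p]$ given in Lemma~\ref{lem2.2}(1), and (ii) that the involution $w\mapsto \hat w$ on the Iwahori--Hecke algebra corresponds on Hecke eigenvalues to the index swap $2i+r \mapsto 2n-2i-r$, so that the $\hat{}$ in the definition of $\mu_p^{i,r}$ (via $\hat\delta_{rij}$) is precisely what produces the ``dual'' eigenvalue. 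Once these identifications are in place, the rest is a mechanical adaptation of Lemma~\ref{3.5}(2).
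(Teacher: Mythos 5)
Your overall template---expand $\mu_p^{i,r}$ by its definition, conjugate the unipotent twist into the form $\tau(g\times 1)\tau_x^{-1}d_p^{-1}J$, and read off the support condition of $\mu_p$---is indeed the paper's, and your identification of where $p^{s(2i+r)}=|\det w|_p^{-s}$ and the dual eigenvalue $\lambda_{2n-2i-r}$ should come from is right in spirit. But the second step of your plan contains a genuine error: the support condition here does \emph{not} force $g$ into an open compact subgroup. Writing $g=\left[\begin{smallmatrix}a&b\\c&d\end{smallmatrix}\right]$, nonvanishing of the integrand requires $bd^{-1}\in M_n(\mathcal{O}_p)$, $d^{-1}c\in p^2M_n(\mathcal{O}_p)$ and $d^{-1}\in x+pM_n(\mathcal{O}_p)$; as $x$ runs over the quotient $pM_n(\mathcal{O}_p)\hat{\delta}_{rij}/pM_n(\mathcal{O}_p)$ and $j$ varies, $d^{-1}$ sweeps out the whole lattice $\bigcup_j pM_n(\mathcal{O}_p)\hat{\delta}_{rij}$ (which contains elements of arbitrarily small determinant, already for the coset $x=0$), so $g$ ranges over the \emph{infinite} union of double cosets $K_0(p^2)\,\mathrm{diag}[\hat{d},d]\,K_0(p^2)$ with $\hat{d}\in\GL_n(\mathbb{B}_p)\cap pM_n(\mathcal{O}_p)w^{-1}$. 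This lemma is therefore structurally the unramified computation \eqref{24}, not Lemma \ref{3.5}(2): there is no ``volume times point evaluation'' step, and no separate ``Iwasawa contribution'' left over to supply $L^{(p)}(s,\mathbf{f})$ afterwards. As written, your steps two and three are mutually inconsistent---once $g$ is confined to a compact set the local $L$-factor has nowhere to come from.

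The correct mechanism is that $L^{(p)}(s,\mathbf{f})$ and $\lambda_{2n-2i-r}$ fall out of a single factorization: $\GL_n(\mathbb{B}_p)\cap pM_n(\mathcal{O}_p)w^{-1}=\bigl(\GL_n(\mathbb{B}_p)\cap M_n(\mathcal{O}_p)\bigr)\cdot pw^{-1}$, so the weighted sum of $|\det(pd)|_p^{-s}\overline{\mathbf{f}(hg)}$ over the cosets above is $\bigl(\sum_d|\det d|_p^{-s}T_d\bigr)\circ T_w$ applied to $\overline{\mathbf{f}}$, which on the eigenform gives $L^{(p)}(s,\mathbf{f})\cdot|\det w|_p^{-s}\lambda_{2n-2i-r}\overline{\mathbf{f}(h)}$ all at once. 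Correspondingly, your bookkeeping item (i) aims at the wrong target: the index set $\{(j,x)\}$ does not enumerate the cosets of $[K_0(p)\,\mathrm{diag}[\hat{w},w]\,K_0(p)]$ from Lemma \ref{lem2.2}; it enumerates residues of $d^{-1}$ modulo $pM_n(\mathcal{O}_p)$, and the single extra Hecke operator $T_w$ (whence the index swap $2i+r\mapsto 2n-2i-r$) only emerges after the factorization just described. Your plan would stall at step two; the fix is to replace the compactness claim by this coset description and then sum the resulting Dirichlet series.
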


\begin{proof}
Again we only prove for Case I and omit the same proof for Case II. Note that
\[
\left[\begin{array}{cccc}
1 & 0 & 0 & \frac{x}{p}\\
0 & 1 & \frac{x^{\ast}}{p} & 0\\
0 & 0 & 1 & 0\\
0 & 0 & 0 & 1
\end{array}\right]=J^{-1}d_p\tau_x^{-1}d_p^{-1}J,
\]
with
\[
\tau_x=\left[\begin{array}{cccc}
1 & 0 & 0 & 0\\
0 & 1 & 0 & 0\\
0 & x & 1 & 0\\
x^{\ast} & 0 & 0 & 1 
\end{array}\right],d_p=\left[\begin{array}{cccc}
p & 0 & 0 & 0\\
0 & 1 & 0 & 0\\
0 & 0 & p^{-1} & 0\\
0 & 0 & 0 & 1
\end{array}\right].
\]
We need to compute
\begin{equation}
\int_{G(\Q_p)}\sum_{j,x}\mu_p(\tau(g\times 1)\tau_x^{-1}d_x^{-1}J)\overline{\mathbf{f}(hg)}\mathbf{d}g.\label{30}
\end{equation}
Let $g=\left[\begin{array}{cc}
a & b\\
c & d
\end{array}\right]$, then
\[
\tau(g\times 1)\tau_x^{-1}d_x^{-1}J=\left[\begin{array}{cccc}
pb & 0 & -p^{-1}a & bx\\
0 & 0 & 0 & -1\\
pd & 0 & -p^{-1}c & dx-1\\
pb & 1 & -p^{-1}(a-x^{\ast}) & bx
\end{array}\right].
\]
We need $d$ to be invertible and
\[
\left[\begin{array}{cc}
pd & 0\\
pb & 1
\end{array}\right]^{-1}\left[\begin{array}{cc}
-p^{-1}c & dx^{\ast}-1\\
-p^{-1}(a-x) & bx^{\ast}
\end{array}\right]=\left[\begin{array}{cc}
-p^{-2}d^{-1}c & p^{-1}x-p^{-1}d^{-1}\\
p^{-1}x^{\ast}-p^{-1}\hat{d} & bd^{-1}
\end{array}\right]\in M_{2n}(\mathcal{O}_p).
\]
That is
\[
bd^{-1}\in M_{n}(\mathcal{O}_p),d^{-1}c\in p^2M_n(\mathcal{O}_p),d^{-1}\in x+pM_n(\mathcal{O}_p).
\]
There is a permutation $j\mapsto j'$ such that $\hat{d}$ runs through $\mathrm{GL}_n(\mathbb{B}_p)\cap pM_n(\mathcal{O}_p)\delta_{rij'}^{-1}$ for fixed $j$. Hence, taking the sum over $j,x$, \eqref{30} can be written as
\[
\sum_{g}|\det(pd)|_p^{-s}\overline{\mathbf{f}(hg)}\mathbf{d}g.
\]
with the sum taking over 
\[
g\in K_0(p^2)\mathrm{diag}[\hat{d},d]K_0(p^2)\text{ with }\hat{d}\in\mathrm{GL}_n(\mathbb{B}_p)\cap p M_n(\mathcal{O}_p)w^{-1}.
\]
Equivalently, we take the sum of $g$ over 
\[
K_0(p^2)\mathrm{diag}[\hat{d},d]K_0(p^2)\cdot K_0(p^2)\mathrm{diag}[p\hat{w},p^{-1}w]K_0(p^2)\text{ with }\hat{d}\in\mathrm{GL}_n(\mathbb{B}_p)\cap M_n(\mathcal{O}_p).
\]
Let $T_d$ be the Hecke operator given by the double coset $K_0(p^2)\mathrm{diag}[d^{\ast},d^{-1}]K_0(p^2)$ and $T_w$ given by the double coset $K_0(p^2)\mathrm{diag}[p^{-1}w^{\ast},pw^{-1}]K_0(p^2)$. Then \eqref{30} equals
\[
\begin{aligned}
&\sum_{\substack{\xi\in K_0(p^2)\backslash\mathfrak{X}/K_0(p^2)\\\xi=\mathrm{diag}[d^{\ast},d^{-1}]}}|\det(d)|_p^{-s}|\det(w)|_p^{-s}\overline{\mathbf{f}|T_d|T_w(h)}\\
=&L^{(p)}(s,\mathbf{f})|\det(w)|_p^{-s}\lambda_{2n-2i-r}\overline{\mathbf{f}(h)}
\end{aligned}
\]
as desired.
\end{proof}

Combining the computations for integrals outside $p$ in last subsection we obtain the following proposition.

\begin{prop}
Let $\mathbf{f}\in S_{\kappa}(K_0(\mathfrak{n}p))$ be an eigenform with $(\mathfrak{n},p)=1$. Write $\beta_{1,p},...,\beta_{2n,p}$ be its Satake parameters at $p$. Then
\begin{equation}
\label{32}
\begin{aligned}
&\int_{G(\Q)\backslash G(\mathbb{A})}\sum_{\substack{0\leq i\leq n\\r=0,1}}(-1)^rp^{\frac{(2i+r)(2i+r-1)}{2}-2n(2i+r)}\mathcal{E}(g,h^c,s,i,r)\overline{\mathbf{f}\left(g\left[\begin{array}{cc}
0 & \epsilon\mathfrak{n}^{-1}p^{-1}\\
\mathfrak{n}p &0
\end{array}\right]\right)}\mathbf{d}g\\
=&\frac{p^{-2n^2-2n+2ns}\Omega^0_{\kappa}(s)}{\Lambda_{\mathfrak{n}p}^{2n}(s)}\prod_{i=1}^{2n}(1-\beta_{i,p}p^{-s+2n+\epsilon})L_{\mathfrak{n}}(s,\mathbf{f})\overline{\mathbf{f}(h)}.
\end{aligned}
\end{equation}
\label{prop 3.8}
\end{prop}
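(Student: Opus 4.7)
The plan is to repeat the doubling-method argument of Proposition \ref{prop 3.6} with $\chi=1$, but with the local section $\tilde{\mu}_p$ at $p$ replaced by $\mu_p^{i,r}$, and then to sum the resulting identities over $(i,r)$ with the weights $(-1)^r p^{(2i+r)(2i+r-1)/2-2n(2i+r)}$.

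For each fixed $(i,r)$ I would decompose $\mathcal{E}(g,h^c,s,i,r)$ along the double cosets $P_{2n}(\Q)\backslash G_{2n}(\Q)/(G(\Q)\times G(\Q))$. As in Section \ref{section 3}, the cuspidality of $\mathbf{f}$ annihilates every double coset except the one represented by $\tau$, so the integral unfolds to an integral over $G(\mathbb{A})$ which factors over places. At $\infty$, at $v\nmid\mathfrak{n}p$, and at $v\mid\mathfrak{n}$ the relevant local sections agree with those of Proposition \ref{prop 3.6} specialized to $\chi=1$ and $\mathfrak{c}=0$; in particular the corresponding local integrals are independent of $(i,r)$ and bundle, together with the adelic normalizations and volume factors, into $\Omega^0_\kappa(s)\, L_{\mathfrak{n}p}(s,\mathbf{f})\big/\Lambda^{2n}_{\mathfrak{n}p}(s)\cdot\overline{\mathbf{f}(h)}$ multiplied by the local factor at $p$. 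The preceding lemma supplies that local factor for each $(i,r)$ as $p^{s(2i+r)}L^{(p)}(s,\mathbf{f})\lambda_{2n-2i-r}$. Using $L_{\mathfrak{n}p}(s,\mathbf{f})\,L^{(p)}(s,\mathbf{f}) = L_{\mathfrak{n}}(s,\mathbf{f})$, the left-hand side of the proposition becomes
\[
\frac{\Omega^0_\kappa(s)\,L_{\mathfrak{n}}(s,\mathbf{f})}{\Lambda^{2n}_{\mathfrak{n}p}(s)}\,\Sigma(s)\,\overline{\mathbf{f}(h)},
\]
where
\[
\Sigma(s) \;=\; \sum_{\substack{0\le i\le n\\ r\in\{0,1\}}} (-1)^r\, p^{\frac{(2i+r)(2i+r-1)}{2} - 2n(2i+r) + s(2i+r)}\,\lambda_{2n-2i-r}.
\]

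The main obstacle is the remaining Satake-type identity
\[
\Sigma(s) \;=\; p^{-2n^2-2n+2ns}\prod_{i=1}^{2n}\bigl(1 - \beta_{i,p}\, p^{-s+2n+\epsilon}\bigr).
\]
Expanding the right-hand side as a polynomial in $p^{-s}$, its coefficients are, up to explicit powers of $p$, the elementary symmetric polynomials $e_k(\beta_{1,p},\dots,\beta_{2n,p})$; on the other hand each eigenvalue $\lambda_{2n-2i-r}$ is, via the Satake isomorphism described in Proposition \ref{prop 2.3} and Section \ref{section 2}, a known symmetric function in $\beta_{1,p},\dots,\beta_{2n,p}$. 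I would verify the identity by computing the Satake transform of each $T(p_{i,r})$ through the coset decomposition of Lemma \ref{lem2.2}, carefully tracking the index $[S(\Z_p):d^{\ast}S(\Z_p)d]$ as in the proof of Proposition \ref{prop 2.3}; the weights $(-1)^r p^{(2i+r)(2i+r-1)/2-2n(2i+r)}$ appearing in $\Sigma(s)$ are chosen precisely so that the resulting symmetric functions reassemble into the generating function $\prod_{i=1}^{2n}(1-\beta_{i,p}X)$ at $X=p^{-s+2n+\epsilon}$. This is essentially the same polynomial identity underlying the construction of $Q(X)$ used for $p$-stabilisation in Section \ref{section 2}.
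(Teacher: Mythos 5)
Your proposal matches the paper's own proof in all essentials: the unfolding and the local factors away from $p$ are inherited from Proposition \ref{prop 3.6} specialized to $\chi=1$, the local factor at $p$ is supplied by the preceding lemma, and after reindexing the sum over $(i,r)$ by $j=2i+r$ everything reduces to the identity $\sum_{i=0}^{2n}(-1)^ip^{i(i-1)/2}\lambda_iX^i=\prod_{i=1}^{2n}(1-\beta_{i,p}p^{2n-1+\epsilon}X)$, which the paper establishes not by recomputing Satake transforms from the coset decomposition as you propose, but by identifying $T(p_{i,r})$ with the corresponding operator on $\mathrm{SO}(2n,2n)$ resp. $\mathrm{Sp}(2n)$ and invoking \cite[Lemma 19.13]{Sh00} together with the explicit Satake map. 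The one inaccuracy is your closing remark: the identity needed here is the degree-$2n$ ``Tamagawa identity'' for the standard Satake parameters, not the degree-$2^{2n}$ spin-type polynomial $\tilde{Q}(X)$ underlying the $p$-stabilisation in Section \ref{section 2}, though this side comment does not affect the validity of your argument.
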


\begin{proof}
The integral can be written as
\[
\begin{aligned}
&\frac{\Omega_{\kappa}(s)}{\Lambda_{\mathfrak{n}p}^{2n}(s)}\left(\sum_{i,r}(-1)^rp^{\frac{(2i+r)(2i+r-1)}{2}-2n(2i+r)}\lambda_{2n-2i-r}(p^{-s})^{2n-2i-r}\right)L_{\mathfrak{n}}(s,\mathbf{f})\overline{\mathbf{f}(h)}\\
=&\frac{p^{-2n^2-2n+2ns}\Omega^0_{\kappa}(s)}{\Lambda_{\mathfrak{n}p}^{2n}(s)}\left(\sum_{i=0}^{2n}(-1)^ip^{\frac{i(i-1)}{2}}\lambda_i(p^{-s+1})^i\right)L_{\mathfrak{n}}(s,\mathbf{f})\overline{\mathbf{f}(h)}.
\end{aligned}
\]
The proposition then follows by following two `Tamagawa-identities' relating the eigenvalues and Satake parameters.\\
(Case I)
\[
\sum_{i=0}^{2n}(-1)^ip^{\frac{i(i-1)}{2}}\lambda_iX^i=\prod_{i=1}^{2n}(1-\beta_{i,p}p^{2n-2}X),
\]
(Case II)
\[
\sum_{i=0}^{2n}(-1)^ip^{\frac{i(i-1)}{2}}\lambda_iX^i=\prod_{i=1}^{2n}(1-\beta_{i,p}p^{2n}X).
\]
Note that if we identify $G_n(\Q_v)$ with orthogonal group $\mathrm{SO}(2n,2n)(\Q_v)$ in Case I and symplectic group $\mathrm{Sp}(2n,\Q_v)$ in Case II then the Hecke operator $T(p_{i,r})$ can be identified with the one for $\mathrm{SO}(2n,2n)(\Q_v)$ or $\mathrm{Sp}(2n,\Q_v)$ given by double coset
\[
[K_0(p)\xi K_0(p)],\xi=\mathrm{diag}[\hat{w},w],w=\left[\begin{array}{cc}
1_{2n-2i-r} & 0\\
0 & p\cdot 1_{2i+r}
\end{array}\right].
\]
Here we are abusing the notation $K_0(p)$ for subgroup of $\mathrm{SO}(2n,2n)(\Z_v)$ or $\mathrm{Sp}(2n,\Z_v)$ consisting elements of the form $\left[\begin{array}{cc}
a &b\\
c & d
\end{array}\right]$ with $c\equiv 0\text{ mod }p$. Then above two identities can be proved by using \cite[Lemma 19.13]{Sh00} and explicit description of Satake map as chosen in \cite[Proposition 6.3]{Q} for Case I and \cite[Theorem 2.9]{Sh99} for Case II.
\end{proof}

\section{Differential Operators and Classical Reformulations}
\label{section 4}

From integral representation of the $L$-function in \eqref{27} and \eqref{32}, the Fourier expansion of Eisenstein series at special value $s=\kappa$ will provide information of special value of the $L$-function at $s=\kappa$. To obtain the information for more special values of the $L$-function, we will apply certain differential operators on the Eisenstein series to shift its weight. In this section, we construct certain differential operators associated to $G_{2n}(\R)$ which are compatible with the doubling embedding $G_n(\R)\times G_n(\R)\to G_{2n}(\R)$, generalizing the construction in \cite{B85, BS} for symplectic groups. After applying such differential operator on the Eisenstein series, we will also reformulate Proposition \ref{prop 3.6} and Proposition \ref{prop 3.8} in the classical setting in Proposition \ref{prop4.5}.

We first recall some results in \cite{B85,BS}. Let $\mathfrak{H}_n$ be the Siegel upper half plane with the action of $\mathrm{Sp}(n)(\R)$. There are compatible doubling maps
\[
\mathrm{Sp}(n)(\R)\times\mathrm{Sp}(n)(\R)\to\mathrm{Sp}(2n)(\R),\quad\mathfrak{H}_n\times\mathfrak{H}_n\to\mathfrak{H}_{2n}.
\]
For any $\alpha\in\C$ there is a differential operator $\mathcal{D}_{n,\alpha}$ such that for any integer $\kappa>n$ and any holomorphic function $f$ and $\gamma\in\mathrm{Sp}(n)(\R)$ we have
\begin{equation}
\label{33}
\begin{aligned}
\mathcal{D}_{n,\kappa}(f|V)&=(\mathcal{D}_{n,\kappa}f)|V,\\
\mathcal{D}_{n,\kappa}(f|_{\kappa}1\times\gamma)&=(\mathcal{D}_{n,\kappa}f)|_{\kappa+1}1\times\gamma,\\
\mathcal{D}_{n,\kappa}(f|_{\kappa}\gamma\times 1)&=(\mathcal{D}_{n,\kappa}f)|_{\kappa+1}\gamma\times 1.
\end{aligned}
\end{equation}
Here $V$ is the operator such that
\[
(f|V)\left(\left[\begin{array}{cc}
z_1 & z_2\\
z_3 & z_4
\end{array}\right]\right)=f\left(\left[\begin{array}{cc}
z_4 & z_3\\
z_2 & z_1
\end{array}\right]\right).
\]

Now let 
\[
G_n(\R)\times G_n(\R)\to G_{2n}(\R),\quad\mathcal{H}_n\times\mathcal{H}_n\to\mathcal{H}_{2n},
\]
be compatible doubling maps as in \eqref{doublinggroup} and \eqref{doublingdomain}. We are going to construct differential operators $\mathfrak{D}_{\alpha}:=\mathfrak{D}_{n,\alpha}$ on $\mathcal{H}_{2n}$ having similar transformation properties as \eqref{33}. In Case II we have $\mathcal{H}_{2n}\cong\mathfrak{H}_{4n}$ so the pullback of $\mathcal{D}_{2n,\alpha}$ along $\iota:\mathcal{H}_{2n}\stackrel{\sim}\longrightarrow\mathfrak{H}_{4n}$ has desired properties. We then focus on the Case I.

\subsection{Preliminaries}

We start by recalling some notations about multi-linear algebra used in \cite[III.6]{F}. Set $N=\{1,...,n\}$. For $0\leq p\leq n$, we denote
\[
N_p:=\{a=\{a_1<a_2<...<a_n\}\subset N\}
\]
for the set of all subsets of $N$ with $p$ elements. We view $M_{\left(\substack{n\\p}\right)}(\C)=\{(A_{a,b})_{a,b\in N_p}\}$ as a set of matrices whose entries are labeled by subsets $a,b\in N_p$ of $N$ with $p$ elements. Let $A\in M_n(\C)$ and for $0\leq h\leq n$, we define $A^{[p]}$ and $\mathrm{Ad}^{[p]}A$ in $M_{\left(\substack{n\\p}\right)}(\C)$ by setting their $(a,b)$-entries as
\[
\begin{aligned}
(A^{[p]})_{a,b}&=|A|^a_b:=\det((A_{i,j})_{i\in a,j\in b}),\\
(\mathrm{Ad}^{[p]}A)_{a,b}&=\epsilon(a,N\backslash a)\epsilon(b,N\backslash b)|A|^{N\backslash b}_{N\backslash a},
\end{aligned}
\]
where for two subsets $a'=\{a_1',...,a_p'\}$, $a''=\{a_1'',...,a_q''\}$ of $N$ with $a_1'<...<a_p'$ and $a_1''<...<a_q''$ we write $\epsilon(a',a'')$ for the sign of permutation which takes the $(p+q)$-tuple $\{a'_1,...,a_p',a_1'',...,a_p''\}$ into its natural order. For $0\leq p,q\leq n$, we define the $\sqcap$-multiplication by
\[
\begin{aligned}
\sqcap: M_{\left(\substack{n\\p}\right)}(\C)\times M_{\left(\substack{n\\q}\right)}(\C)&\to M_{\left(\substack{n\\p+q}\right)}(\C),\\
(A\sqcap B)_{a,b}&:=\frac{1}{\left(\substack{p+q\\p}\right)}\sum_{\substack{a=a'\cup a''\\b=b'\cup b''}}\epsilon(a',a'')\epsilon(b',b'')A_{a',b'}B_{a'',b''}.
\end{aligned}
\]
This multiplication is bilinear, commutative and associative whose properties are recorded in the following lemma.

\begin{lem}
For $A,B\in M_n(\C)$, $C\in M_{\left(\substack{n\\p}\right)}(\C)$, $D\in M_{\left(\substack{n\\q}\right)}(\C)$, we have
\begin{enumerate}
\item $A^{[p]}=A\sqcap...\sqcap A$ with $p$ copies,
\item $(A+B)^{[p]}=\sum_{a+b=p}\left(\substack{p\\a}\right)A^{[a]}\sqcap B^{[b]}$,
\item $A^{[p+q]}(C\sqcap D)=(A^{[p]}C)\sqcap (A^{[q]}D)$,
\item $(C\sqcap D)A^{[p+q]}=(CA^{[p]})\sqcap(DA^{[q]})$,
\item $(\mathrm{Ad}^{[p]}A)A^{[p]}=\det(A)1_{\left(\substack{n\\p}\right)}$.
\end{enumerate}
\end{lem}

We remark that for our purpose we actually hope to define $A^{[p]}$, $\mathrm{Ad}^{[p]}A$ and $\sqcap$-multiplication for matrices with quaternion entries. The above definitions with $\C$ replaced by other ring still make sense but the properties in above lemma no longer hold in general for non-commutative rings. Here we will not try to study these multi-linear algebra in the non-commutative setting. Instead, we will embed $\mathbb{H}$ into $M_2(\C)$ and consider all matrices with entries in $\mathbb{H}$ as complex matrices.

Write $\mathbb{H}=\R\oplus\R\mathbf{i}\oplus\R\mathbf{j}\oplus\R\mathbf{ij}$ with $\mathbf{i}^2=\mathbf{j}^2=-1$, $\mathbf{ij}=-\mathbf{ji}$ for the Hamilton quaternion algebra. Define an embedding
\[
\mathfrak{i}:\mathbb{H}\to M_2(\C),\qquad x=a+b\mathbf{i}+c\mathbf{j}+d\mathbf{ij}\mapsto\left[\begin{array}{cc}
a+bi & c+di\\
-c+di & a-bi
\end{array}\right].
\]
Extending this map entries by entries provides an embedding
\[
\mathfrak{i}:M_n(\mathbb{H})\to M_{2n}(\C),\qquad x=(x_{ij})\mapsto(\mathfrak{i}(x_{ij})).
\]
One checks that $\mathfrak{i}$ induces an isomorphism
\[
\mathfrak{i}:M_n(\mathbb{H})\stackrel{\sim}\longrightarrow M_{2n}'(\C):=\{x\in M_{2n}(\C):J_n'x=\overline{x}J_{n}'\},
\]
where $J_n'=\mathrm{diag}[J_1,...,J_1]$ with $n$ copies of $J_1=\left[\begin{array}{cc}
0 & -1\\
1 & 0
\end{array}\right]$. We thus have identifications
\[
\begin{aligned}
G_n(\R)\stackrel{\sim}\longrightarrow G_{2n}'(\R)&:=\{g\in\mathrm{SL}_{4n}(\C):gJ_{2n}g^{\ast}=J_{2n},J_{2n}'g=\overline{g}J_{2n}'\},\\
S_n(\R)\stackrel{\sim}\longrightarrow S_{2n}'(\R)&:=\{x\in M_{2n}'(\C):x^{\ast}=x\},\\
\mathcal{H}_n\stackrel{\sim}\longrightarrow\mathfrak{H}_{2n}&:=\{z=x+iy:x,y\in M_{2n}'(\C),x^{\ast}=x,y^{\ast}=y>0\}\\
&:=\{z\in M_{2n}(\C):J_n'z=\transpose{z}J_n,i(z^{\ast}-z)>0\}.
\end{aligned}
\]
Here the last map is given by $z=x+iy\mapsto\mathfrak{i}(x)+i\mathfrak{i}(y)$ and we set $z^{\ast}:=J_n'\transpose{z}J_n'=x^{\ast}+iy^{\ast}$ for $z=x+iy\in M_{2n}'(\C)\otimes_{\R}\C$. The group $G_{2n}'(\R)$ acts on $\mathfrak{H}_{2n}$ by
\[
g=\left[\begin{array}{cc}
a & b\\
c & d
\end{array}\right]\times z\mapsto(az+b)(cz+d)^{-1}
\]
which is compatible with the above identifications and the action of $G_n(\R)$ on $\mathcal{H}_n$. The doubling embedding $G_{2n}'(\R)\times G_{2n}'(\R)\to G_{4n}'(\R)$ and $\mathfrak{H}_{2n}\times\mathfrak{H}_{2n}\to\mathfrak{H}_{4n}$ defined as in \eqref{doublinggroup} and \eqref{doublingdomain} are also compatible with the above identifications. We will construct differential operators on $\mathfrak{H}_{4n}$ have similar transformation properties as \eqref{33} with respect to this doubling embedding. 

Over $\mathfrak{H}_{4n}$ there are basic differential operators $\partial_{ij}=\frac{\partial}{\partial\mathfrak{Z}_{ij}}$. We write elements $\mathfrak{Z}\in\mathcal{H}_{2n}$ and these differential operators into matrices
\[
\mathfrak{Z}=(\mathfrak{Z}_{ij})=\left[\begin{array}{cc}
z_1 & z_2\\
z_3 & z_4
\end{array}\right],\qquad\partial=(\partial_{ij})=\left[\begin{array}{cc}
\partial_1 & \partial_2\\
\partial_3 & \partial_4
\end{array}\right],
\]
with relations
\[
J_n'z_1=\transpose{z}_1J_n',\qquad J_n'z_4=\transpose{z}_4J_n',\qquad J_nz_3=\transpose{z}_2J_n,
\]
\[
J_n'\partial_1=\transpose{\partial}_1J_n',\qquad J_n'\partial_4=\transpose{\partial}_4J_n',\qquad J_n\partial_3=\transpose{\partial}_2J_n
\]

Let
\[
f_{\mathfrak{T}}(\mathfrak{Z})=e^{\lambda(\mathfrak{TZ})}=e^{\lambda(\tau_1z_1+\tau_3z_2+\tau_2z_3+\tau_4z_4)},\qquad\mathfrak{T}=\left[\begin{array}{cc}
\tau_1 & \tau_2\\
\tau_3 & \tau_4
\end{array}\right]\in S_{4n}'(\R).
\]
For $0\leq p\leq 2n$, we have 
\[
\partial_1^{[p]}f_{\mathfrak{T}}(\mathfrak{Z})=\transpose\tau_1^{[p]}f_{\mathfrak{T}}(\mathfrak{Z}),\,\partial_4^{[p]}f_{\mathfrak{T}}(\mathfrak{Z})=\transpose\tau_4^{[p]}f_{\mathfrak{T}}(\mathfrak{Z}),
\]
\[
\partial_2^{[p]}f_{\mathfrak{T}}(\mathfrak{Z})=\transpose{\tau}_3^{[p]}f_{\mathfrak{T}}(\mathfrak{Z}),\,\partial_3^{[p]}f_{\mathfrak{T}}(\mathfrak{Z})=\transpose{\tau}_2^{[p]}f_{\mathfrak{T}}(\mathfrak{Z}).
\]

\subsection{Construction of differential operators}

We now start the construction of differential operators. The construction is similar to \cite{B85} for symplectic groups with $\transpose{\mathfrak{Z}}=\mathfrak{Z}$ replaced by $J_{2n}'\mathfrak{Z}=\transpose{\mathfrak{Z}}J_{2n}'$.

For nonnegative integers $a,b,p$ with $a+b+p=2n$, put
\begin{equation}
\begin{aligned}
\delta(p,a,b)=z_2^{[a]}\transpose{\partial_4}^{[a]}\sqcap((1_n^{[p]}\sqcap z_2^{[b]}\transpose{\partial}_2^{[b]})(\mathrm{Ad}^{[b+p]}\transpose{\partial}_1)\transpose{\partial_3}^{[b+p]}).
\end{aligned}
\end{equation}

For nonnegative integer $p,q$ such that $p+q=2n$ put
\[
\Delta(p,q)=\sum_{a+b=q}(-1)^b\left(\begin{array}{c}q\\b\end{array}\right)\delta(p,a,b).
\]
For $f$ of the form
\[
f(\mathfrak{Z})=g(z_2,z_3,z_4)e^{\lambda(\tau z_1)},\qquad\tau\in S_{2n}'(\R),\,\det(\tau)\neq 0,
\]
the straightforward computations show that
\[
\begin{aligned}
\delta(p,a,b)f&=\left(\tau^{[a+b]}z_2^{[a+b]}\transpose{(\partial_4^{[a]}\sqcap\partial_3^{[b]}\transpose{\tau}^{-[b]}\partial_2^{[b]})\sqcap\transpose{\partial_3}^{[p]}}\right)f.\\
\Delta(p,q)f&=\left(\transpose{\partial_3}^{[p]}\sqcap\tau^{[q]}z_2^{[q]}\transpose{(\partial_4-\partial_3\transpose{\tau}^{-1}\partial_2)^{[q]}}\right)f.
\end{aligned}
\]

\begin{lem}
\label{lem4.1}
Let $f$ be a holomorphic function on $\mathfrak{H}_{4n}$. Let $V$ be the operator defined by
\[
(f|V)\left(\left[\begin{array}{cc}
z_1 & z_2\\
z_3 & z_4
\end{array}\right]\right)=f\left(\left[\begin{array}{cc}
z_4 & z_3\\
z_2 & z_1
\end{array}\right]\right),
\]
then
\begin{equation}
\Delta(p,q)(f|V)=(\Delta(p,q)f)|V.
\end{equation}
\end{lem}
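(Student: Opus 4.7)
The plan is to reduce the identity to a purely algebraic statement about the operator $\Delta(p,q)$ itself, and then verify it by examining the structure of $\delta(p,a,b)$. Under the involution $V$ on $\mathcal{H}_{2n}$, the four blocks of $\mathfrak{Z}$ are permuted by $\sigma = (14)(23)$, so by the chain rule
\[
\partial_i(f|V)(\mathfrak{Z}) = (\partial_{\sigma(i)}f)(V\mathfrak{Z}),
\]
and similar identities hold for the transposed/block versions. Iterating this, $\Delta(p,q)(f|V)(\mathfrak{Z})$ is obtained by applying to $f$, at the point $V\mathfrak{Z}$, the operator gotten from $\Delta(p,q)$ via the formal substitution $\partial_i \to \partial_{\sigma(i)}$, while the coefficient matrices still carry the blocks $z_2, z_3$ of the original $\mathfrak{Z}$. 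On the other hand, $(\Delta(p,q)f)|V(\mathfrak{Z}) = (\Delta(p,q)f)(V\mathfrak{Z})$ is obtained by evaluating the coefficient matrices of $\Delta(p,q)$ at the blocks of $V\mathfrak{Z}$, which effects the swap $z_2 \leftrightarrow z_3$. Hence the lemma reduces to the purely algebraic operator identity
\[
\Delta(p,q)\big|_{\partial_i \to \partial_{\sigma(i)}} \;=\; \Delta(p,q)\big|_{z_2 \leftrightarrow z_3}.
\]

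Next I would check this by inspecting $\delta(p,a,b)$, which has been constructed as a sum of two terms $T_1 + T_2$ that are ``formal transposes'' of each other: the first uses $\transpose{z_3}$ and the un-transposed derivatives, while the second uses $z_2$ and their transposes. Using Cauchy--Binet $(AB)^{[h]} = A^{[h]}B^{[h]}$, the compatibility $(\transpose{A})^{[h]} = \transpose{(A^{[h]})}$, the transposition behaviour of $\mathrm{Ad}^{[p]}$, and the rule $\transpose{(C \sqcap D)} = \transpose{C} \sqcap \transpose{D}$, one verifies that the substitution $\partial_i \to \partial_{\sigma(i)}$ carries $T_1(p,a,b)$ to $T_2(p,a,b)$ after the swap $z_2 \leftrightarrow z_3$, and symmetrically $T_2 \mapsto T_1$. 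Thus $\delta(p,a,b)$ is invariant. The binomial coefficients and signs in $\Delta(p,q) = \sum_{a+b=q}(-1)^b \binom{q}{b}\delta(p,a,b)$ depend only on $a,b,q$, so the invariance propagates immediately to $\Delta(p,q)$.

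The main obstacle is the bookkeeping in the quaternionic setting. Unlike in the symplectic case of \cite{B85}, the relation between $z_2$ and $z_3$ is $z_3 = \transpose{\overline{z_2}}$ rather than simply $z_3 = \transpose{z_2}$, so the main involution $\overline{\cdot}$ interacts nontrivially with the various transposes appearing in $\delta(p,a,b)$. One must check that the quaternionic conjugations arising from these identifications cancel correctly; this is essentially the reason that the specific placement of transposes in the definition of $\delta(p,a,b)$ has been tuned for the quaternionic setup. Once the term-wise correspondence $T_1 \leftrightarrow T_2$ is verified, the lemma follows by summing.
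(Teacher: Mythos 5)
Your reduction of the lemma to an operator identity (via the chain rule for the block swap) is fine, but the verification step fails. The claimed term-wise correspondence --- that the simultaneous substitution $\partial_1\leftrightarrow\partial_4$, $\partial_2\leftrightarrow\partial_3$, $z_2\leftrightarrow z_3$ carries the first summand of $\delta(p,a,b)$ to the second --- is false. Applying that substitution to the first summand produces
\[
\transpose{z}_2^{[a]}\partial_1^{[a]}\sqcap\bigl((1_n^{[p]}\sqcap\transpose{z}_2^{[b]}\partial_2^{[b]})(\mathrm{Ad}^{[b+p]}\partial_4)\partial_3^{[b+p]}\bigr),
\]
whereas the second summand is
\[
z_2^{[a]}\transpose{\partial_4}^{[a]}\sqcap\bigl((1_n^{[p]}\sqcap z_2^{[b]}\transpose{\partial}_2^{[b]})(\mathrm{Ad}^{[b+p]}\transpose{\partial}_1)\transpose{\partial_3}^{[b+p]}\bigr).
\]
The first expression has $\partial_1$ in the outer factor and $\partial_4$ inside the adjoint, the second the other way round; since $\partial_1$ and $\partial_4$ differentiate with respect to different blocks, no amount of Cauchy--Binet or transposition bookkeeping can identify these two operators. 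In fact the two summands of $\delta(p,a,b)$ are paired by the conjugate-transpose symmetry $\partial_i\mapsto\transpose{\partial}_{\rho(i)}$ with $\rho=(23)$ (this is what makes the operator compatible with the quaternionic constraint $z_3=z_2^{\ast}$), not by the block swap $V$. So the individual $\delta(p,a,b)$, and a fortiori their two summands, are not $V$-invariant, and your strategy of proving invariance summand by summand cannot work.

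The $V$-invariance only materialises for the full alternating sum $\Delta(p,q)=\sum_{a+b=q}(-1)^b\binom{q}{b}\delta(p,a,b)$, and even then it is not a formal polynomial identity in independent variables: one needs the hermitian constraints $\tau_1=\tau_1^{\ast}$, $\tau_4=\tau_4^{\ast}$, $\tau_3=\tau_2^{\ast}$ on the test matrix and, crucially, the identity $A^{[p]}\sqcap B^{[q]}=\transpose{A}^{[p]}\sqcap\transpose{B}^{[q]}$, which is available only because $p+q=n$ makes the $\sqcap$-product a scalar. This is why the paper's proof proceeds differently: it evaluates both $\Delta(p,q)(f|V)$ and $(\Delta(p,q)f)|V$ on the exponential test functions $f_{\mathfrak{T}}=e^{\lambda(\mathfrak{T}\mathfrak{Z})}$, where the alternating sum collapses to a closed expression built from $(\partial_4-\partial_3\transpose{\tau}^{-1}\partial_2)^{[q]}$ and its transpose, and then matches the two resulting expressions in the $\tau_i$ and $z_i$ using the top-degree transposition identity above. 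You should replace your second paragraph by this explicit comparison (or an equivalent computation); the reduction in your first paragraph can stay, but the heart of the proof is precisely the matching of the two closed-form expressions, which your proposal never actually carries out.
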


\begin{proof}
This is an analogue of the computations in \cite[page. 87]{B85}. It suffices to prove for certain test functions
\[
f_{\mathfrak{T}}(\mathfrak{Z})=e^{\lambda(\mathfrak{T}\mathfrak{Z})}=e^{\lambda(\tau_1z_1+\tau_2z_3+\tau_3z_2+\tau_4z_4)},\,\mathfrak{T}=\left[\begin{array}{cc}
\tau_1 & \tau_2\\
\tau_3 & \tau_4
\end{array}\right],
\]
with $\det(\tau_1)\neq 0$. We calculate that
\[
(\Delta(p,q)f)|V=\left(\tau_2^{[p]}\sqcap\tau_1^{[q]}z_3^{[q]}(\tau_4-\tau_3\tau_1^{-1}\tau_2)^{[q]}\right)f.
\]
On the other hand, $(f_{\mathfrak{T}}|V)(\mathfrak{Z})=e^{\lambda(\tau_1z_4+\tau_2z_2+\tau_3z_3+\tau_4z_4)}$, so
\[
\Delta(p,q)(f|V)=\left(\tau_3^{[p]}\sqcap\tau_4^{[q]}z_2^{[q]}(\tau_1-\tau_2\tau_4^{-1}\tau_3)^{[q]}\right)f.
\]
The lemma then follows from properties of $\sqcap$ and the fact $A^{[p]}\sqcap B^{[q]}=\transpose A^{[p]}\sqcap\transpose B^{[q]}$ if $p+q=2n$.
\end{proof}

Keep considering $f:=f_{\mathfrak{T}}$ as a test function with $\det(\tau_1)\neq 0$. We are now going to compute $\Delta(p,q)(f|_{\kappa}1\times J_{2n})$ and prove the analogue of \cite[Proposition 1]{B85}. Clearly,
\[
(1\times J_{2n})\mathfrak{Z}=\left[\begin{array}{cc}
z_1-z_2z_4^{-1}z_3 & z_2z_4^{-1}\\
z_4^{-1}z_3 & -z_4^{-1}
\end{array}\right],
\]
\[
(f|_{\kappa}1\times J_{2n})(\mathfrak{Z})=\det(z_4)^{-\kappa}e^{\lambda(\tau_1(z_1-z_2z_4^{-1}z_3)+\tau_2z_4^{-1}z_3+\tau_3z_2z_4^{-1}-\tau_4z_4^{-1})}.
\]
Decompose $(f|_{\kappa}1\times J_{2n})(\mathfrak{Z})=g(z_2,z_3,z_4)h(z_4)e^{\lambda(\tau_1z_1)}$ with
\[
\begin{aligned}
g(z_2,z_3,z_4)&=\det(z_4)^{-2n}e^{\lambda(-\tau_1(z_2-\tau_1^{-1}\tau_2)z_4^{-1}(z_2-\tau_1^{-1}\tau_2)^{\ast}}),\\
h(z_4)&=\det(z_4)^{2n-\kappa}e^{\lambda((\tau_3\tau_1^{-1}\tau_2-\tau_4)z_4^{-1})}.
\end{aligned}
\]

\begin{lem}
\label{lemma4.3}
Let $\tau=\tau^{\ast}$ with $\det(\tau)\neq 0$. Then for $q\geq 1$, we have
\[
(\partial_4+\partial_3\transpose{\tau}^{-1}\partial_2)^{[q]}\det(z_4)^{-2n}e^{\lambda(\tau z_2z_4^{-1}z_2^{\ast})}=0.
\] 
\end{lem}

\begin{proof}
This is an analogue of \cite[Lemma 4]{B85}. As there we prove for $\mathfrak{Z}=\mathfrak{X}+i\mathfrak{Y}$ replaced by $\mathfrak{Y}$. We start with
\[
\pi^{-2n^2}\int_{M_{2n}'(\C)}e^{-\lambda(xy_4x^{\ast})}dx=\det(y_4)^{-2n}.
\]
Changing variables $x\mapsto uy_2y_4^{-1}$ with $\tau=u^{\ast}u$ we have
\[
\det(y_4)^{-2n}=\pi^{-2n^2}\int_{M_{2n}'(\C)}e^{-\lambda(xy_4x^{\ast}+xy^{\ast}_2u^{\ast}+uy_2x^{\ast})}dx\cdot e^{-\lambda(\tau(y_2y_4^{-1}y_2^{\ast}))}.
\]
Applying $(\partial_4+\partial_3\transpose{\tau}^{-1}\partial_2)^{[q]}$ we obtain
\[
\begin{aligned}
&(\partial_4+\partial_3\transpose{\tau}^{-1}\partial_2)^{[q]}\det(z_4)^{-2n}e^{\lambda(\tau z_2z_4^{-1}z_2^{\ast})}\\
=&\pi^{-2n^2}\int_{M_{2n}'(\C)}(\partial_4+\partial_3\transpose{\tau}^{-1}\partial_2)^{[q]}e^{-\lambda(xy_4x^{\ast}+xy^{\ast}_2u^{\ast}+uy_2x^{\ast})}dx\\
=&\pi^{-2n^2}\int_{M_{2n}'(\C)}\sum_{a+b=q}\left(\begin{array}{c}q\\a\end{array}\right)(-\transpose{x}\overline{x})^{[a]}\sqcap(\transpose{x}\overline{x})^{[b]}e^{-\lambda(xy_4x^{\ast}+xy^{\ast}_2u^{\ast}+uy_2x^{\ast})}dx=0
\end{aligned}
\]
unless $q=0$ which proves the claim.
\end{proof}

\begin{lem}
\label{lemma4.4}
Let $h$ be a holomorphic function on $\mathfrak{H}_{2n}$ (in variable $z_4$), then
\[
\begin{aligned}
\partial_4^{[q]}(h(-z_4^{-1})\det(z_4)^{-\kappa})&=\det(z_4)^{-\kappa}\sum_{a+b=q}(-1)^a\left(\begin{array}{c}q\\a\end{array}\right)C_a\left(\kappa-\frac{q-1}{2}\right)\transpose{z_4^{-[q]}}\\
&\times (\transpose{z}_4^{-[a]}\sqcap(\partial^{[b]}h)(-z_4^{-1}))\transpose{z}_4^{-[q]},
\end{aligned}
\]
where
\[
C_a(s):=s\left(s+\frac{1}{2}\right)...\left(s+\frac{a-1}{2}\right).
\]
\end{lem}

\begin{proof}
This is an analogue of \cite[Lemma 5]{B85} and can be proved by repeating the same computations in \cite[Section 3.1]{Bo}. For completeness, we include the proof of the lemma in the Appendix \ref{appendix}.
\end{proof}

The same proof of \cite[Lemma 6]{B85} shows that
\[
(\partial_4-\partial_3\transpose\tau_1^{-1}\partial_2)^{[q]}(gh)=\sum_{a+b=q}\left(\begin{array}{c}
q\\
a
\end{array}\right)((\partial_1-\partial_3\transpose{\tau}_1^{-1}\partial_2)^{[a]}g)\sqcap(\partial_4^{[b]}h).
\]
By Lemma \ref{lemma4.3} the $(\partial_1-\partial_3\transpose{\tau}_1^{-1}\partial_2)^{[a]}g=0$ unless $a=0$ and thus by Lemma \ref{lemma4.4} we have
\[
\begin{aligned}
(\partial_4-\partial_3\transpose{\tau}_1^{-1}\partial_2)^{[q]}f|_{\kappa}1\times J_n&=(-1)^q\det(z_4)^{2n-\kappa}\sum_{a+b=q}\left(\begin{array}{c}
q\\a
\end{array}\right)C_a\left(\kappa-2n-\frac{q-1}{2}\right)\transpose{z}_4^{-[q]}\\
&\times(\transpose{z}_4^{-[a]}\sqcap\transpose(\tau_3\tau_1^{-1}\tau_2-\tau_4)^{[b]})\transpose{z}_4^{-[q]}e^{\lambda(\mathfrak{T}(1\times J_n)\mathfrak{Z})}.
\end{aligned}
\]

Note that
\[
\begin{aligned}
\transpose\partial_3^{[p]}g&=(-1)^p(\tau_1z_2-\tau_2)^{[p]}z_4^{-[p]}g.
\end{aligned}
\]
The following proposition can then be obtained by straightforward computations.

\begin{prop}
Let $f=f_{\mathfrak{T}}$ be a test function with $\det(\tau_1)\neq0$ then
\begin{equation}
\begin{aligned}
&\Delta(p,q)(f|_{\kappa}1\times J_{2n})=\sum_{a+b=q}(-1)^a\left(\begin{array}{c}q\\a\end{array}\right)C_a\left(\kappa-2n-\frac{q-1}{2}\right)\\
\times&\left((\tau_1z_2-\tau_2)^{[p]}\sqcap\tau_1^{[q]}z_2^{[q]}(1^{[a]}\sqcap z_4^{-[b]}(\tau_3^{\ast}\tau_1^{-1}\tau_2-\tau_4)^{[b]})\right)f|_{{\kappa}+1}1\times J_{2n}.
\end{aligned}
\end{equation}
\end{prop}

Setting
\[
\mathfrak{D}_{\kappa}=\sum_{p+q=2n}\lambda_{\kappa}(q)\Delta(p,q),
\]
we will determine the coefficient $\lambda_{\kappa}(q)$ such that
\[
\mathfrak{D}_{\kappa}(f|_{\kappa}1\times J_{2n})=(\mathfrak{D}_{\kappa}f)|_{\kappa+1}1\times J_{2n}.
\]
Let $f=f_{\tau}$ be the test function with $\det(\tau_1)\neq0$ as previous. By above proposition, we can write
\[
\begin{aligned}
\mathfrak{D}_{\kappa}(f|_{\kappa}1\times J_{2n})&=\sum_{p+q=2n}\sum_{c+d=p}\sum_{a+b=q}(-1)^{a+b+d}\lambda_{\kappa}(q)\left(\begin{array}{c}p\\c\end{array}\right)\left(\begin{array}{c}q\\a\end{array}\right)C_a\left(-\kappa+2n+\frac{b}{2}\right)\\
&\times\left((\tau_1z_2)^{[c+q]}(1^{[a+c]}\sqcap z_4^{-[b]}(\tau_4-\tau_3\tau_1^{-1}\tau_2)^{[b]})\sqcap\tau_2^{[d]}\right)f|_{\kappa+1}1\times J_{2n}.
\end{aligned}
\]
Denote $t=a+c$ and put
\[
a_{\kappa}(t,b,d)=\sum_{a+c=t}(-1)^{a+b+d}\lambda_{\kappa}(a+b)\left(\begin{array}{c}
a+b\\a
\end{array}\right)\left(\begin{array}{c}
c+d\\c
\end{array}\right)C_a\left(-\kappa+2n+\frac{b}{2}\right).
\]
Then we have
\[
\begin{aligned}
&\mathfrak{D}_{\kappa}(f|_{\kappa}1\times J_{2n})\\
=&\sum_{t+b+d=2n}a_{\kappa}(t,b,d)\left((\tau_1z_2)^{[t+b]}(1^{[t]}\sqcap z_4^{-[b]}(\tau_4-\tau_3\tau_1^{-1}\tau_2)^{[b]})\sqcap\tau_2^{[d]}\right)f|_{{\kappa}+1}1\times J_{2n}.
\end{aligned}
\]
One computes that
\[
\begin{aligned}
(\mathfrak{D}_{\kappa}f)|_{{\kappa}+1}1\times J_{2n}&=\sum_{p+q=2n}\lambda_{\kappa}(q)\left((\tau_1z_2z_4^{-1})^{[q]}(\tau_4-\tau_3\tau_1^{-1}\tau_2)^{[q]}\sqcap\tau_2^{[p]}\right)f|_{{\kappa}+1}1\times J_{2n}.
\end{aligned}
\]
Therefore, in order to have the desired property, we must have $a_{\kappa}(0,q,p)=\lambda_{\kappa}(q)$ and $a_{\kappa}(t,b,d)=0$ otherwise. By the same argument as in \cite[page. 89]{B85}, 
\[
\lambda_{\kappa}(q)=\left(\begin{array}{c}
2n\\q
\end{array}\right)\tilde{C}_q(-{\kappa}+2n),\qquad\tilde{C}_q(s)=\prod_{\substack{0\leq p\leq 2n\\p\neq q}}C_p(s),
\]
has the desired property. To make the formula simpler, we normalize $\lambda_{\kappa}(q)$ by multiplying $\frac{C_{2n}(-\kappa+2n)}{\widetilde{C}_0(-\kappa+2n)}$ and define
\begin{equation}
\mathfrak{D}_{\kappa}=\sum_{p+q=2n}(-1)^p\left(\begin{array}{c}2n\\p\end{array}\right)C_q\left(\kappa-3n+\frac{1}{2}\right)\Delta(p,q),
\end{equation}
which satisfies the transformation properties as \eqref{33}.

\subsection{Differential operators}

We now treat Case I and II together. For any $\alpha\in\C$, we set\\
(Case I)
\[
\mathfrak{D}_{\alpha}=\sum_{p+q=2n}(-1)^p\left(\begin{array}{c}2n\\p\end{array}\right)C_q\left(\alpha-3n+\frac{1}{2}\right)\Delta(p,q),
\]
(Case II)
\[
\mathfrak{D}_{\alpha}=\sum_{p+q=2n}(-1)^p\left(\begin{array}{c}2n\\p\end{array}\right)C_q\left(\alpha-2n+\frac{1}{2}\right)\Delta(p,q).
\]
and summarize our construction of differential operators in the following proposition.

\begin{prop}
There is a differential operator $\mathfrak{D}_{\alpha}$ on $\mathcal{H}_{2n}$ with $\alpha\in\C$ such that for any integer $\kappa>2n$ and any holomorphic function $f$ and $\gamma\in G(\R)$ we have
\begin{equation}
\begin{aligned}
\mathfrak{D}_{\kappa}(f|V)&=(\mathfrak{D}_{\kappa}f)|V,\\
\mathfrak{D}_{\kappa}(f|_{\kappa}1\times\gamma)&=(\mathfrak{D}_{\kappa}f)|_{\kappa+1}1\times\gamma,\\
\mathfrak{D}_{\kappa}(f|_{\kappa}\gamma\times 1)&=(\mathfrak{D}_{\kappa}f)|_{\kappa+1}\gamma\times1.
\end{aligned}
\end{equation}
\end{prop}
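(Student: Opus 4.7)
The plan is to handle Case II and Case I separately. For Case II, the identification $\iota:\mathcal{H}_{2n}\xrightarrow{\sim}\mathfrak{H}_{4n}$ is compatible with the doubling embedding $G(\R)\times G(\R)\to G_{2n}(\R)$ and its Siegel counterpart $\mathrm{Sp}(2n)(\R)\times\mathrm{Sp}(2n)(\R)\to\mathrm{Sp}(4n)(\R)$, so I would simply define $\mathfrak{D}_\kappa$ to be the pullback of B\"ocherer's operator $\mathcal{D}_{2n,\kappa}$ along $\iota$; the three transformation properties then transfer directly from \eqref{33}. The remainder of the argument concerns Case I.

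For Case I, take $\mathfrak{D}_\kappa$ as defined by the displayed formula just above the proposition. The $V$-equivariance $\mathfrak{D}_\kappa(f|V)=(\mathfrak{D}_\kappa f)|V$ is automatic: by the earlier lemma it already holds termwise on each $\Delta(p,q)$, hence for any $\C$-linear combination. Using $V$-equivariance one reduces the $\gamma\times 1$ identity to the $1\times\gamma$ one: unwinding the doubling embedding, one has $f|_\kappa(\gamma\times 1)=((f|V)|_\kappa(1\times\gamma))|V$, so applying the $1\times\gamma$ identity to $f|V$ and then $V$-equivariance gives the $\gamma\times 1$ identity.

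The bulk of the argument is then the $1\times\gamma$ identity. By linearity and a standard density argument (finite $\C$-linear combinations of exponentials $f_\mathfrak{T}$ are dense among holomorphic functions on $\mathcal{H}_{2n}$), it suffices to verify it on test functions $f_\mathfrak{T}$ with $\det(\tau_1)\neq 0$. Since $G(\R)$ is generated by the Siegel parabolic $P(\R)$ and the element $J_n$, it further suffices to handle $\gamma\in P(\R)$ and $\gamma=J_n$. The parabolic case is routine: translations commute with the differential operators constituting $\Delta(p,q)$, while $\mathrm{GL}_n$-type elements produce constant automorphy factors that cancel out on both sides, and one checks agreement without any condition on the $\lambda_\kappa(q)$. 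The $J_n$ case is exactly the computation carried out above: expanding $\mathfrak{D}_\kappa(f_\mathfrak{T}|_\kappa 1\times J_n)$ and matching it against $(\mathfrak{D}_\kappa f_\mathfrak{T})|_{\kappa+1}1\times J_n$ produces the triangular system requiring $a_\kappa(t,b,d)=\lambda_\kappa(b)$ when $t=0$ and $a_\kappa(t,b,d)=0$ otherwise, which is solved by $\lambda_\kappa(q)=\left(\begin{array}{c}n\\q\end{array}\right)\tilde{C}_q(-\kappa+2n)$.

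The main obstacle is solving this linear system, i.e.\ producing a single family of constants $\lambda_\kappa(q)$ that simultaneously kill every $a_\kappa(t,b,d)$ with $t>0$ while giving the correct coefficient at $t=0$. The double-summand structure of $\delta(p,a,b)$, forced by the quaternionic involution so that $\mathfrak{D}_\kappa f$ respects the symmetry $z_3=z_2^\ast$ and remains a function on $\mathcal{H}_{2n}$, roughly doubles the combinatorics compared with the symplectic case of \cite{B85}, and must be tracked carefully through the $\sqcap$-identities. The assumption $\kappa>2n$ ensures non-degeneracy of the scalars $C_a(-\kappa+2n+2b)$ arising in the intertwining, so that the weight shift is exactly $+1$ and no holomorphic projection is required.
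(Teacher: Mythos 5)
Your proposal is correct and follows essentially the same route as the paper: Case II is disposed of by pulling back B\"ocherer's symplectic operator along $\iota:\mathcal{H}_{2n}\xrightarrow{\sim}\mathfrak{H}_{4n}$, and Case I is handled by reducing to test functions $f_{\mathfrak{T}}$ and to the generators $P(\R)$ and $J_n$, with the $V$-equivariance lemma giving the $\gamma\times 1$ identity from the $1\times\gamma$ one and the triangular system $a_{\kappa}(t,b,d)$ solved by $\lambda_{\kappa}(q)=\binom{n}{q}\tilde{C}_q(-\kappa+2n)$. You also correctly identify the only genuinely new ingredient relative to \cite{B85}, namely the doubled summand in $\delta(p,a,b)$ forced by the quaternionic involution.
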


\begin{proof}
It suffices to prove these formulas for test functions  and generators of $G(\R)$. The first and second formula follows easily from our previous discussions and the third formula follows from the first and second. 
\end{proof}

For $s\in\C$ and an integer $v\geq 1$ we define
\[
\mathfrak{D}_{s}^v=\mathfrak{D}_{s+v-1}\circ...\circ\mathfrak{D}_s,\qquad\mathring{\mathfrak{D}}_{s}^v=(\mathfrak{D}_{s}^v)|_{z_2=0}.
\]
For $\mathfrak{T}=-\epsilon\mathfrak{T}^{\ast}$ with $\epsilon=-1$ in Case I and $\epsilon=1$ in Case II, we define a polynomial $\mathfrak{P}_{s}^v$ in the entries $\mathfrak{T}_{ij}$ of $\mathfrak{T}$ by
\[
\mathring{\mathfrak{D}_{s}^v}(e^{\lambda(\mathfrak{T}\mathfrak{Z})})=\mathfrak{P}_{s}^v(\mathfrak{T})e^{\lambda(\tau_1z_1+\tau_4z_4)},\qquad\mathfrak{T}=\left[\begin{array}{cc}
\tau_1 & \tau_2\\
\tau_3 & \tau_4
\end{array}\right].
\]
$\mathring{\mathfrak{D}}_{s}^v$ is a homogeneous polynomial in $\partial_{ij}$ with at most one term free of $\partial_1$ and $\partial_4$, namely $c_{s}^v\det(\partial_2)^{v}$ with certain constant $c_{s}^v$. To determine the constant, we observe that\\
(Case I)
\[
\mathfrak{D}_{s}(\det(z_2)^v)=C_{2n}\left(\frac{v}{2}\right)C_{2n}\left(s-3n+\frac{v}{2}\right)\det(z_2)^{v-1},
\]
(Case II)
\[
\mathfrak{D}_{s}(\det(z_2)^v)=C_{2n}\left(\frac{v}{2}\right)C_{2n}\left(s-2n+\frac{v}{2}\right)\det(z_2)^{v-1}.
\]
Hence\\
(Case I)
\[
\mathring{\mathfrak{D}}_{s}^v(\det(z_2)^v)=\prod_{i=1}^{v}C_{2n}\left(\frac{i}{2}\right)C_{2n}(s-3n+v-\frac{i}{2}),
\]
(Case II)
\[
\mathring{\mathfrak{D}}_{s}^v(\det(z_2)^v)=\prod_{i=1}^{v}C_{2n}\left(\frac{i}{2}\right)C_{2n}(s-2n+v-\frac{i}{2}).
\]
Therefore
\begin{equation}
\label{csv}
\begin{aligned}
c_s^v&=\prod_{i=0}^{v-1}C_{2n}(s-3n+v-\frac{i}{2}),\qquad&\text{ Case I},\\ 
c_s^v&=\prod_{i=0}^{v-1}C_{2n}(s-2n+v-\frac{i}{2}),\qquad&\text{ Case II}.
\end{aligned}
\end{equation}
Let $p$ be an odd prime. Then for $\mathfrak{T}=\left[\begin{array}{cc}
\tau_1 & \tau_2\\
\tau_3 & \tau_4
\end{array}\right]\in\Lambda$ such that $2L^{-1}\tau_1,2L^{-1}\tau_4\in M_n(\mathcal{O}^{\#})$ with $L=p^n$ a power of $p$, there is a congruence
\begin{equation}
\mathfrak{P}_s^v(\mathfrak{T})\equiv c_s^v\det(\tau_2)^v\text{ mod }L.
\label{41}
\end{equation}

\subsection{Classical reformulations}

Let $\mathbf{f}\in S_{\kappa}(K_0(\mathfrak{n}p))$ be an eigenform as in Section \ref{section 3}. Denote $f\in S_{\kappa}(\Gamma_0(Np))$ be its corresponding classical modular forms. Let $\chi\neq 1$ be a primitive Dirichlet character of conductor $p^{\mathfrak{c}}$. Define 
\begin{equation}
\mathfrak{E}_{\kappa}(z,w,s):=j(g_z,z_0)^{\kappa}j(h_z,z_0)^{\kappa}\mathcal{E}_{\kappa}(g_z,h_z,s),
\end{equation}
for $z=g_zz_0,w=h_zz_0$ be a function on $\mathcal{H}\times\mathcal{H}$. Here $\mathcal{E}_{\kappa}$ is defined using the Hecke character corresponds to $\chi$. Clearly $\mathfrak{E}_{\kappa}(z,w,\kappa)\in M_{\kappa}(\Gamma_0(N^2p^{2\mathfrak{c}}))\otimes M_{\kappa}(\Gamma_0(N^2p^{2\mathfrak{c}}))$. Similarly, for $\chi=1$ define
\begin{equation}
\mathfrak{E}_{\kappa}(z,w,s,i,r):=j(g_z,z_0)^{\kappa}j(h_z,z_0)^{\kappa}\mathcal{E}_{\kappa}(g_z,h_z,s,i,r).
\end{equation}
Then $\mathfrak{E}_{\kappa}(z,w,\kappa,i,r)\in M_{\kappa}(\Gamma_0(N^2p^2))\otimes M_{\kappa}(\Gamma_0(N^2p^2))$. The inner product formula in Proposition \ref{prop 3.6} and Proposition \ref{prop 3.8} can be reformulated as
\begin{equation}
\label{44}
\left\langle \mathfrak{E}_{\kappa}(z,-\overline{w},s),f\left|_{\kappa}\left[\begin{array}{cc}
0 & \epsilon\\
N^2p^{2\mathfrak{c}} & 0
\end{array}\right](z)\right.\right\rangle_{\Gamma_0(N^2p^{2\mathfrak{c}})}^z=\frac{\Omega^{\mathfrak{c}}_{\kappa}(s)}{\Lambda^{2n}_{Np}(s,\chi)}L_{Np}(s,f,\chi)\overline{f(w)},
\end{equation}
for $\chi\neq 1$ and
\begin{equation}
\label{45}
\begin{aligned}
&\left\langle\sum_{\substack{0\leq i\leq n\\r=0,1}}(-1)^rp^{\frac{(2i+r)(2i+r-1)}{2}-2n(2i+r)}\mathfrak{E}_{\kappa}(z,-\overline{w},s,i,r),f\left|_{\kappa}\left[\begin{array}{cc}
0 & \epsilon\\
N^2p^{2} & 0
\end{array}\right](z)\right.\right\rangle^z_{\Gamma_0(Np)}\\
=&\frac{p^{-2n^2-2n+2ns}\Omega^0_{\kappa}(s)}{\Lambda^{2n}_{Np}(s)}\prod_{i=1}^{2n}(1-\beta_{i,p}p^{-s+2n+\epsilon})L_N(s,f)\overline{f(w)}.
\end{aligned}
\end{equation}

Let $v\geq 0$. For $\chi\neq 1$, define
\begin{equation}
\mathfrak{E}_{\kappa}^v(z,w,s):=\delta(z)^{s-\kappa+v}\delta(w)^{s-\kappa+v}\mathfrak{D}_s^v\left(\delta(z)^{-s+\kappa-v}\delta(w)^{-s+\kappa-v}\mathfrak{E}_{\kappa-v}(z,w,s)\right).
\end{equation}
For $\chi=1$ define
\begin{equation}
\mathfrak{E}_{\kappa}^v(z,w,s,i,r):=\delta(z)^{s-\kappa+v}\delta(w)^{s-\kappa+v}\mathfrak{D}_s^v\left(\delta(z)^{-s+\kappa-v}\delta(w)^{-s+\kappa-v}\mathfrak{E}_{\kappa-v}(z,w,s,i,r)\right).
\end{equation}

\begin{prop}
Assume $\kappa>2n$ and $f\in S_{\kappa}(\Gamma_0(Np))$ is an eigenform. Denote
\begin{equation}
\label{omega}
\begin{aligned}
\Omega^{\mathfrak{c}}_{\kappa,v}(s)&=\prod_{i=0}^{v-1}C_{2n}\left(s+i-3n+\frac{1}{2}\right)C_{2n}(-s-i)\Omega^{\mathfrak{c}}_{\kappa}(s),\qquad &\text{Case I},\\
\Omega^{\mathfrak{c}}_{\kappa,v}(s)&=\prod_{i=0}^{v-1}C_{2n}\left(s+i-2n+\frac{1}{2}\right)C_{2n}(-s-i)\Omega^{\mathfrak{c}}_{\kappa}(s),\qquad &\text{Case II}.
\end{aligned}
\end{equation}
(1) Let $\chi$ be a nontrivial Hecke character of conductor $p^{\mathfrak{c}}$, then
\begin{equation}
\left\langle \mathfrak{E}^v_{\kappa}(z,-\overline{w},s),f\left|_{\kappa}\left[\begin{array}{cc}
0 & \epsilon\\
N^2p^{2\mathfrak{c}} & 0
\end{array}\right](z)\right.\right\rangle_{\Gamma_0(N^2p^{2\mathfrak{c}})}^z=\frac{\Omega^{\mathfrak{c}}_{\kappa,v}(s)}{\Lambda^{2n}_{Np}(s,\chi)}L_{Np}(s,f,\chi)\overline{f(w)}.\label{48}
\end{equation}
(2) Let $\chi=1$, then
\begin{equation}
\begin{aligned}
&\left\langle\sum_{\substack{0\leq i\leq n\\r=0,1}}(-1)^rp^{\frac{(2i+r)(2i+r-1)}{2}-2n(2i+r)}\mathfrak{E}^v_{\kappa}(z,-\overline{w},s,i,r),f\left|_{\kappa}\left[\begin{array}{cc}
0 & \epsilon\\
N^2p^{2} & 0
\end{array}\right](z)\right.\right\rangle^z_{\Gamma_0(Np)}\\
=&\frac{p^{-2n^2-2n+2ns}\Omega^0_{\kappa,v}(s)}{\Lambda^{2n}_{Np}(s)}\prod_{i=1}^{2n}(1-\beta_{i,p}p^{-s+2n+\epsilon})L_{N}(s,f)\overline{f(w)}.
\end{aligned}
\end{equation}
\label{prop4.5}
\end{prop}

\begin{proof}
Similar to the computations in \cite[Section 4]{B85} we need to apply the differential operators to 
\[
f_s(\mathfrak{Z})=\det(z_1+z_2+z_3+z_4)^{-s},\quad\mathfrak{Z}=\left[\begin{array}{cc}
z_1 & z_2\\
z_3 & z_4
\end{array}\right]
\]
which is\\
(Case I)
\[
\mathfrak{D}_s^vf_s=\prod_{i=0}^{v-1}C_{2n}(s+i-3n+\frac{1}{2})C_{2n}(-s-i)f_{s+v},
\]
(Case II)
\[
\mathfrak{D}_s^vf_s=\prod_{i=0}^{v-1}C_{2n}(s+i-2n+\frac{1}{2})C_{2n}(-s-i)f_{s+v}.
\]
Hence, up to those constants
\[
\mathfrak{D}_s^v(j(g,z)^{-\kappa+v}|j(g,z)|^{\kappa-v-s})=j(g,z)^{-\kappa}|j(g,z)|^{\kappa-s}
\]
and the proposition can be easily obtained from \eqref{44},\eqref{45}.
\end{proof}

\section{The Main Analytic Result}
\label{section 5}

Let $p\nmid N$ be an odd prime split in $\mathbb{B}$ as in previous sections. Let $f\in S_{\kappa}(\Gamma_0(Np))$ be an eigenform for all Hecke operators and also an eigenform of $U(L)$ for any $L|p^{\infty}$, i.e $f|U(L)=\alpha(L)f$. Suppose $\chi\neq 1$ is a nontrivial primitive Dirichlet character of conductor $p^{\mathfrak{c}}$. We rewrite the inner product formula in \eqref{48} as
\begin{equation}
\begin{aligned}
&\left\langle \mathfrak{E}^v_{\kappa}(z,-\overline{w},s)\left|^z_{\kappa}\left[\begin{array}{cc}
1 & 0\\
0 & N^2p^{2\mathfrak{c}}
\end{array}\right]\right.\left|^w_{\kappa}\left[\begin{array}{cc}
1 & 0\\
0 & N^2p^{2\mathfrak{c}}
\end{array}\right]\right.,f\left|_{\kappa}\left[\begin{array}{cc}
0 & \epsilon\\
1 & 0
\end{array}\right](z)\right.\right\rangle_{\Gamma^0(N^2p^{2\mathfrak{c}})}^z\\
=&\frac{\Omega^{\mathfrak{c}}_{\kappa,v}(s)}{\Lambda^{2n}_{Np}(s,\chi)}L_{Np}(s,f,\chi)\overline{f(w)}.
\end{aligned}
\end{equation}
This is a Petersson inner product of level $N^2p^{2\mathfrak{c}}$. We are going to express it as a Petersson inner product of level $N^2p$. For $f,g\in M_{\kappa}(\Gamma^0(N^2p^{2\mathfrak{c}}))$, there is a trace operator
\[
\mathrm{Tr}f:=\sum_{\gamma}f|_{\kappa}\gamma,\mathrm{Tr}g:=\sum_{\gamma}g|_{\kappa}\gamma,
\]
where $\gamma$ runs over $\Gamma^0(N^2p^{2\mathfrak{c}})\backslash \Gamma^0(N^2p)$ such that $\mathrm{Tr}f,\mathrm{Tr}g\in M_{\kappa}(\Gamma^0(N^2p))$. Moreover, we have
\[
\langle f,g\rangle_{\Gamma^0(N^2p^{2\mathfrak{c}})}=\left\langle\mathrm{Tr}f,\mathrm{Tr}g\right\rangle_{\Gamma^0(N^2p)}.
\]
In particular, the representatives can be chosen as
\[
\left\{\left[\begin{array}{cc}
1 & N^2pT\\
0 & 1
\end{array}\right]:T=\epsilon T^{\ast}\in M_n(\mathcal{O})\text{ mod }p^{2\mathfrak{c}-1}\right\}.
\]
By the fact 
\[
\left[\begin{array}{cc}
1 & 0\\
0 & N^2p^{2\mathfrak{c}}
\end{array}\right]\left[\begin{array}{cc}
1 & N^2pT\\
0 & 1
\end{array}\right]=\left[\begin{array}{cc}
1 & T\\
0 & p^{2\mathfrak{c}-1}
\end{array}\right]\left[\begin{array}{cc}
1 & 0\\
0 & Np
\end{array}\right],
\]
we have
\[
\mathrm{Tr}\left(f\left|_{\kappa}\left[\begin{array}{cc}
1 & 0\\
0 & N^2p^{2\mathfrak{c}}
\end{array}\right]\right.\right)=f|U(p^{2\mathfrak{c}-1})\left|_{\kappa}\left[\begin{array}{cc}
1 & 0\\
0 & N^2p
\end{array}\right]\right..
\]
For any $p$-power integer $L$ with $p^{\mathfrak{c}}|L$, define
\begin{equation}
\begin{aligned}
&\mathfrak{g}(z,w):=\mathfrak{g}(z,w,\kappa,v,\chi,L)\\
=&\Lambda_{Np}^{2n}(\kappa-v,\chi)\mathfrak{E}_{\kappa}^v(z,w,\kappa-v)|^zU(L^2)|^wU(L^2)\left|_{\kappa}^z\left[\begin{array}{cc}
1 & 0\\
0 & N^2p
\end{array}\right]\right.\left|_{\kappa}^w\left[\begin{array}{cc}
1 & 0\\
0 & N^2p
\end{array}\right]\right.,
\end{aligned}
\label{51}
\end{equation}
and similarly for $\chi=1$ define 
\begin{equation}
\label{52}
\begin{aligned}
&\mathfrak{g}(z,w):=\mathfrak{g}(z,w,\kappa,v,1,L)\\
=&\sum_{i,r}(-1)^rp^{\frac{(2i+r)(2i+r-1)}{2}-2n(2i+r)}\Lambda_{Np}^{2n}(\kappa-v)\\
\times&\mathfrak{E}^{v}_{\kappa}(z,w,\kappa-v,i,r)|^zU(L^2)|^wU(L^2)\left|_{\kappa}^z\left[\begin{array}{cc}
1 & 0\\
0 & N^2p
\end{array}\right]\right.\left|_{\kappa}^w\left[\begin{array}{cc}
1 & 0\\
0 & N^2p
\end{array}\right]\right..
\end{aligned}
\end{equation}
Denote
\begin{equation}
\label{53}
E_p(s,\chi)=\prod_{i=1}^{2n}\frac{1-\beta_{i,p}^{-1}\chi(p)p^{s-2n-\epsilon}}{1-\beta_{i,p}\chi(p)p^{2n-s+\epsilon-1}},
\end{equation}
so $E_p(s,\chi)=1$ unless $\chi=1$. Then by above discussions and straightforward computations we have
\begin{equation}
\begin{aligned}
&\left\langle\left\langle\mathfrak{g}(z,-\overline{w}),f\left|_{\kappa}\left[\begin{array}{cc}
0 & \epsilon\\
1 & 0
\end{array}\right]\right.\right\rangle^z_{\Gamma^0(N^2p)},f\left|_{\kappa}\left[\begin{array}{cc}
1 & 0\\
0 & N^2p
\end{array}\right]\right.\right\rangle^w_{\Gamma^0(N^2p)}\\
=&\Omega^{\mathfrak{c}}_{\kappa,v}(\kappa-v)
\frac{\alpha(pL^4)}{\alpha(p^{2\mathfrak{c}})}\langle f,f\rangle_{\Gamma^0(Np)}L_{Np}(\kappa-v,f,\chi)E_p(\kappa-v,\chi).
\end{aligned}
\label{54}
\end{equation}
Here $\Omega_{\kappa,v}^{\mathfrak{c}}(\kappa-v)$ is the constant defined in \eqref{omega}.

We end up this section by recording the Fourier expansion of $\mathfrak{g}(z,w)$ in the following proposition.

\begin{prop}
\label{5.1}
Let $\kappa,v\in\Z$ with $\kappa-v>2n$ and $\chi$ a primitive Dirichlet character of conductor $p^{\mathfrak{c}}$. Denote $\mathfrak{g}(z,w)$ for the function defined by \eqref{51} or \eqref{52}. Then $\mathfrak{g}(z,w)$ has Fourier expansion\\
(Case I)
\begin{equation}
\begin{aligned}
\mathfrak{g}(z,w)&=p^{\mathfrak{c}n(n-1)}G(\chi)^n(N^2p)^{-2\kappa n}\sum_{\mathfrak{T}}\mathfrak{P}_{\kappa-v}^v(\mathfrak{T})\chi^{-1}(\det(2\tau_2))e(2N^{-1}\lambda(\tau_2))\\
&\times A_{\kappa-v}(2n)\det(\mathfrak{T})^{\kappa-v-\frac{4n-1}{2}}\prod_{l\nmid Np}P(\chi(l)l^{-\kappa+v})e(\lambda((N^2p)^{-1}(\tau_1z+\tau_4w))),
\end{aligned}
\end{equation} 
(Case II)
\begin{equation}
\begin{aligned}
\mathfrak{g}(z,w)&=p^{\mathfrak{c}n(n-1)}G(\chi)^n(N^2p)^{-2\kappa n}\sum_{\mathfrak{T}}\mathfrak{P}_{\kappa-v}^v(\mathfrak{T})\chi^{-1}(\det(2\tau_2))e(2N^{-1}\lambda(\tau_2))\\
&\times A_{\kappa-v}(2n)\det(\mathfrak{T})^{\kappa-v-\frac{4n+1}{2}}\prod_{l\nmid Np}P(\chi(l)l^{-\kappa+v})\\
&\times L_{Np}(\kappa-v-2n,\chi\rho_{\mathfrak{T}})e(\lambda((N^2p)^{-1}(\tau_1z+\tau_4w))).
\end{aligned}
\end{equation} 
Here $\mathfrak{T}=\left[\begin{array}{cc}
L^2\tau_1 & \tau_2\\
\tau_3 & L^2\tau_4
\end{array}\right]\in\Lambda_{2n}^+$ with $\tau_1,\tau_4\in\Lambda_n,2\tau_2\in M_n(\mathcal{O}^{\#})$, $p\nmid\det(\mathfrak{T})$,
\begin{equation}
\label{A(2n)}
\begin{aligned}
A_{\kappa-v}(2n)&=2^{2-4n}(2\pi i)^{4n(\kappa-v)}(4D_{\mathbb{B}})^{-n(2n-1)}\Gamma_{2n}(2\kappa-2v)^{-1},&\qquad\text{ Case I},\\
A_{\kappa-v}(2n)&=2^{1-\frac{4n+1}{2}}(2\pi i)^{4n(\kappa-v)}(4D_{\mathbb{B}})^{-n(2n+1)}\Gamma_{4n}(2\kappa-2v)^{-1},&\qquad\text{ Case II},
\end{aligned}
\end{equation}
is defined in \eqref{11}, \eqref{12} and
\begin{equation}
\label{57}
G(\chi)=\sum_{x\in\mathcal{O}_p/p^{\mathfrak{c}}\mathcal{O}_p}\chi(N(x))e(p^{-\mathfrak{c}}\lambda(x)).
\end{equation}

\end{prop}

\begin{proof}
Assume $\chi\neq 1$. For $\tau\in\Lambda_{2n}$, we write it as $\tau=\left[\begin{array}{cc}
\tau_1 & \tau_2\\
-\epsilon\tau_2^{\ast} & \tau_4
\end{array}\right]$ with $\tau_1,\tau_4\in\Lambda_n,\tau_2\in M_n(\mathcal{O})$. By our definition of $\mathfrak{E}_{\kappa}(z,w,\kappa)$, it has a Fourier expansion of the form
\[
\begin{aligned}
\Lambda_{Np}^{2n}(\kappa,\chi)\mathfrak{E}_{\kappa}(z,w,\kappa)&=\sum_{\tau\in\Lambda_{2n}}a(\tau,\mathrm{diag}[y,v],\kappa,\chi)e(\lambda(hz))\\
&\times\sum_{x\in M_n(\mathcal{O}_p)/p^{\mathfrak{c}}M_n(\mathcal{O}_p)}\chi(\det x)^{-1}e\left(\frac{2\lambda(\tau_2 x)}{p^{\mathfrak{c}}}\right)e\left(\frac{2\lambda(\tau_2)}{N}\right),
\end{aligned}
\]
where $z=x+iy,w=u+iv$ and $a(\tau,\mathrm{diag}[y,v],\kappa,\chi)$ are the Fourier coefficients of $E_{\kappa}(z\times w,\kappa,\chi)$ as in \eqref{10}. The second line comes from the contributions of $\tilde{\mu}_v$ with $v|Np$. It is nonzero unless $\det\tau_2$ is coprime to $p$. Hence, after applying the operator $U(L^2)$ we see that only those $\mathfrak{T}=\left[\begin{array}{cc}
L^2\tau_1 & \tau_2\\
-\epsilon\tau_2^{\ast} & \tau_4
\end{array}\right]$ with $\det \tau_2$ coprime to $p$ can contribute to the Fourier expansions. In particular $\mathfrak{T}>0$ with $\det\mathfrak{T}$ coprime to $p$. Therefore, by the explicit formulas for $a(\tau,\mathrm{diag}[y,v],\kappa,\chi)$ in \eqref{11},\eqref{12} we obtain the first part of the proposition. 

For $\chi=1$, note that
\[
\begin{aligned}
&\Lambda_{Np}^{2n}(\kappa,\chi)\sum_{\substack{0\leq i\leq n\\r=0,1}}(-1)^rp^{\frac{(2i+r)(2i+r-1)}{2}-2n(2i+r)}\mathfrak{E}_{\kappa}(z,w,\kappa,i,r)\\
=&\sum_{\tau\in\Lambda_{2n}}a(\tau,\mathrm{diag}[y,v],\kappa,\chi)e(\lambda(hz))e(2N^{-1}\lambda(\tau_2))\\
\times&\sum_{\substack{0\leq i\leq n\\r=0,1}}(-1)^rp^{\frac{(2i+r)(2i+r-1)}{2}-2n(2i+r)}\sum_j\sum_{x\in M_n(\mathcal{O})\hat{\delta}_{rij}/M_n(\mathcal{O}_p)}e(2\lambda(\tau_2x)).
\end{aligned}
\]
The second line comes from the contributions of $\mu^{i,r}_p$ and equals $1$ if $\det\tau_2\in\Z_p^{\times}$ and $0$ if otherwise. Hence, only those $\mathfrak{T}\in\Lambda_{2n}^+$ with $\det\mathfrak{T}$ coprime to $p$ contributes to the Fourier expansion of $\mathfrak{g}(z,w)$ and the proposition follows from the explicit formulas \eqref{11},\eqref{12}.
\end{proof}

\section{Special $L$-values and the $p$-adic Measure}
\label{section 6}

Let $f\in S_{\kappa}(\Gamma_0(N))$ be an eigenform with $\kappa>2n$. We further make following two assumptions:\\
(a) all primes $l\nmid N$ splits in $\mathbb{B}$ and $p$ is a fixed odd prime with $p\nmid N$ (thus also splits in $\mathbb{B}$),\\
(b) $f$ is $p$-ordinary with nonzero $p$-stabilisation $f_0\in S_{\kappa}(\Gamma_0(Np))$ and $f_0|U(L)=\alpha(L)f_0$ for any integer $L$ with $L|p^{\infty}$.

In previous sections, we interpolate the special $L$-values of $f$ by inner product formula \eqref{54} relate to Eisenstein series. The properties of these special values then follows from the explicit formulas of the Fourier expansion obtained in Proposition \ref{5.1}. In this section, we summarize these properties and construct the $p$-adic measure interpolating these special values.

Denote the $p$-Satake parameters of $f_0$ as $\beta_{1,p},...,\beta_{2n,p}$. Let $2n+1\leq t\leq \kappa$ be an integer and $v=\kappa-t$. Since the Fourier coefficients of $\mathfrak{g}(z,w)$ is holomorphic we have the following analytic property.

\begin{prop}
\label{6.1}
Let $\chi$ be a primitive Dirichlet character of conductor $p^{\mathfrak{c}}$. Then
\[
E_p(t,\chi)L_{Np}(t,f,\chi)
\]
has meromorphic continuation to whole complex plane and is holomorphic at $t=2n+1,...,\kappa$. 
\end{prop}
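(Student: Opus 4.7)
The plan is to derive both assertions directly from the inner product formula \eqref{54} combined with the explicit Fourier expansion in Proposition \ref{5.1}.

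For the holomorphy assertion, fix an integer $t \in \{2n+1, \ldots, \kappa\}$ and set $v := \kappa - t \geq 0$. By Proposition \ref{5.1}, the function $\mathfrak{g}(z,w)$ defined by \eqref{51} (or \eqref{52} when $\chi = 1$) is a holomorphic modular form of weight $\kappa$ and level $\Gamma^0(N^2p)$ in each variable: its Fourier coefficients are polynomial expressions $\mathfrak{P}^v_{\kappa-v}(\mathfrak{T})$ multiplied by finite Dirichlet $L$-values and Euler-factor polynomials, all finite at the integer $t$. Since $f$ is a cusp form of matching weight and level, the iterated Petersson inner product on the left-hand side of \eqref{54} converges absolutely to a finite complex number.

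On the right-hand side of \eqref{54}, $\langle f, f \rangle_{\Gamma^0(Np)} > 0$; the ratio $\alpha(pL^4)/\alpha(p^{2\mathfrak{c}})$ is nonzero because $\alpha(p)$ is a $p$-adic unit by $p$-ordinarity; and $\Omega^{\mathfrak{c}}_{\kappa, v}(\kappa - v)$ is a finite product of $\Gamma$-values, adelic volumes, and polynomial factors $C_n, \tilde{C}_n$ (resp.\ $C_{2n}, \tilde{C}_{2n}$). Since $t = \kappa - v > 2n$, all the Gamma arguments are positive and an inspection of the explicit formulas from Sections \ref{section 3}--\ref{section 4} shows none of the polynomial factors vanish. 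Dividing \eqref{54} by these nonzero constants realises $E_p(t,\chi)L_{Np}(t,f,\chi)$ as a finite complex number, proving holomorphy at each such integer.

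For the meromorphic continuation to $\C$, I invoke the classical meromorphic continuation of the Siegel Eisenstein series $\mathbf{E}_\kappa(g, s; \chi, \mathfrak{m})$ (as in \cite{Sh00,Q,Sh99}). The differential operator $\mathfrak{D}^v_s$ and the $U(L^2)$ level operators preserve meromorphicity in $s$, so $\mathfrak{g}(z,w)$, viewed as a function of the complex parameter $s$, is meromorphic. Pairing with the cusp form $f$ yields a meromorphic function of $s$ on the left-hand side of \eqref{54}; the right-hand side differs from $E_p(s,\chi) L_{Np}(s,f,\chi)$ only by factors that are meromorphic in $s$, yielding meromorphic continuation of the product to all of $\C$.

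The main subtlety I expect is verifying the non-vanishing of $\Omega^{\mathfrak{c}}_{\kappa, v}(\kappa - v)$ at each of the finitely many relevant integers: if any of the explicit Gamma or polynomial factors happened to vanish, the identity \eqref{54} would degenerate into the trivial statement $0 = 0$ and give no information about the $L$-value. However, this is a routine case-by-case check from the explicit formulas rather than a genuine obstacle.
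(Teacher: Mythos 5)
Your argument is essentially the paper's own: the paper derives Proposition \ref{6.1} in one line from the inner product formula \eqref{54} together with the explicit Fourier expansion of $\mathfrak{g}(z,w)$ in Proposition \ref{5.1} (holomorphy of the coefficients at the special integers), with the continuation in $s$ coming from that of the Siegel Eisenstein series. Your additional insistence on checking that $\Omega^{\mathfrak{c}}_{\kappa,v}(\kappa-v)$ does not vanish --- without which \eqref{54} would degenerate to $0=0$ --- is a legitimate point that the paper passes over silently, but it does not change the route of the proof.
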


The algebraicity of special $L$-values for Case I is already studied in our previous paper \cite{T}. In this paper we are restrict ourselves to the case that our algebraic group is given by a totally isotropic hermitian form so the associated symmetric space will be a `tube' domain. Under this restriction the algebraicity of modular forms can be described in terms of Fourier expansion while for the general case discussed in \cite[Section 5.2]{T} we are using the description of CM points or Fourier-Jacobi expansions. The advantage of using the description via Fourier expansion is that we can now describe the explicit action of $\sigma\in\mathrm{Aut}(\C)$ as in the following proposition.

\begin{prop}
\label{6.2}
For any $\sigma\in\mathrm{Aut}(\C)$ we have\\
(Case I)
\begin{equation}
\begin{aligned}
\left(\frac{\Omega^{\mathfrak{c}}_{\kappa,v}(t)}{G(\chi)^nA_t(2m)}\frac{E_p(t,\chi)L_{Np}(f,t,\chi)}{\langle f_0,f_0\rangle}\right)^{\sigma}=\frac{\Omega^{\mathfrak{c}}_{\kappa,v}(t)}{G(\chi^{\sigma})^nA_{t}(2n)}\frac{E_p(t,\chi)L_{Np}(f^{\sigma},t,\chi^{\sigma})}{\langle f_0^{\sigma},f_0^{c\sigma c}\rangle},
\end{aligned}
\end{equation}
(Case II)
\begin{equation}
\begin{aligned}
\left(\frac{\Omega^{\mathfrak{c}}_{\kappa,v}(t)}{G(\chi)^{n}A_{t}(2m)}\frac{E_p(t,\chi)L_{Np}(f,t,\chi)}{g(\chi)\langle f_0,f_0\rangle}\right)^{\sigma}=\frac{\Omega^{\mathfrak{c}}_{\kappa,v}(t)}{G(\chi^{\sigma})^{n}A_{t}(2n)}\frac{E_p(t,\chi)L_{Np}(f^{\sigma},t,\chi^{\sigma})}{g(\chi^{\sigma})\langle f_0^{\sigma},f_0^{c\sigma c}\rangle}.
\end{aligned}
\end{equation}
for $t=2n+1,...,\kappa$. Here $f^c$ is the complex conjugation of $f$. 
\end{prop}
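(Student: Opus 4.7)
The plan is to use the inner product formula \eqref{54} to rewrite the target ratio of $L$-values as a ratio of iterated Petersson inner products of $\mathfrak{g}(z,-\overline{w})$ against $f_0$, and then transfer the question of Galois equivariance to the Fourier coefficients of $\mathfrak{g}$ via the standard eigenspace projection argument. Concretely, \eqref{54} gives
\[
\frac{E_p(t,\chi)L_{Np}(t,f,\chi)}{\langle f_0,f_0\rangle_{\Gamma^0(Np)}}=\frac{\alpha(p^{2\mathfrak{c}})}{\alpha(pL^4)\,\Omega^{\mathfrak{c}}_{\kappa,v}(t)}\cdot\frac{\bigl\langle\bigl\langle\mathfrak{g}(z,-\overline{w}),f_0|_{\kappa}\gamma_1\bigr\rangle^z,f_0|_{\kappa}\gamma_2\bigr\rangle^w}{\langle f_0,f_0\rangle^{2}},
\]
with $\gamma_1,\gamma_2$ the matrices appearing in \eqref{54}. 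Under the $p$-ordinarity hypothesis the eigenvalues $\alpha(L)$ are algebraic $p$-adic units, so their Galois behavior can be controlled separately.

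Second, for a Hecke eigenform $f_0$ with algebraic Fourier coefficients, the classical principle going back to Shimura says that $\langle h,f_0\rangle/\langle f_0,f_0\rangle$ is the coefficient of $f_0$ in the Hecke-eigenspace decomposition of $h$, hence a $\overline{\mathbb{Q}}$-linear function of the Fourier coefficients of $h$ with coefficients depending algebraically on the Hecke eigenvalues of $f_0$. This gives, for any $\sigma\in\mathrm{Aut}(\mathbb{C})$,
\[
\sigma\!\left(\frac{\langle h,f_0\rangle}{\langle f_0,f_0\rangle}\right)=\frac{\langle h^{\sigma},f_0^{\sigma}\rangle}{\langle f_0^{\sigma},f_0^{c\sigma c}\rangle},
\]
the twist $c\sigma c$ arising because the Petersson pairing involves complex conjugation in its second argument. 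Applying this twice, once in each variable, reduces the problem to tracking the $\sigma$-action on the Fourier coefficients of $\mathfrak{g}$.

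Third, those Fourier coefficients are explicit by Proposition \ref{5.1}, and their Galois transformation is controlled by Proposition \ref{3.1}. In Case I, dividing by $G(\chi)^n A_t(2n)$ leaves only $\mathfrak{P}_{\kappa-v}^{v}(\mathfrak{T})$, the character values $\chi^{-1}(\det 2\tau_2)$, rational powers of $\det\mathfrak{T}$, and the Euler factors $\prod_{l\nmid Np}P_l(\chi(l)l^{-t})$; all these transform cleanly by $\chi\mapsto\chi^\sigma$, matching the Case~I statement. In Case II each Fourier coefficient carries an extra $L_{Np}(t-2n,\chi\rho_{\mathfrak{T}})$, which by Proposition \ref{3.1}(2) contributes a Gauss sum $g(\chi\rho_{\mathfrak{T}})$ under $\sigma$; after incorporating the quadratic character $\rho_{\mathfrak{T}}$ into the algebraic part of the Fourier sum, one recovers the clean factor $g(\chi)$ in the normalization.

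The main obstacle is the precise bookkeeping: one must verify that the $c\sigma c$ twist survives unchanged through the double pairing, that $p$-stabilisation commutes with $\sigma$ compatibly with the Satake parameters $\beta_{i,p}$, and that the constants $\Omega^{\mathfrak{c}}_{\kappa,v}(t),\alpha(p^{2\mathfrak{c}}),\alpha(pL^4)$ are extracted algebraically. The technically trickiest point is the Case~II manipulation replacing $g(\chi\rho_{\mathfrak{T}})$ by $g(\chi)$, which requires a careful analysis of how the projection onto the $f_0$-isotypic component selects (or averages over) the Fourier indices $\mathfrak{T}$.
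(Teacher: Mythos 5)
Your proposal is correct and follows essentially the same route as the paper, whose proof consists of a single remark deferring to the appendix of Böcherer--Schmidt: namely, reduce via the inner product formula \eqref{54} to the Galois-equivariance of $\langle h,f_0\rangle/\langle f_0,f_0\rangle$ (the eigenspace-projection principle, which produces the $c\sigma c$ twist), and then control the $\sigma$-action on the Fourier coefficients of $\mathfrak{g}$ through Propositions \ref{5.1} and \ref{3.1}, including the Case~II passage from $g(\chi\rho_{\mathfrak{T}})$ to $g(\chi)$. Your outline in fact supplies more of the mechanism than the paper records explicitly.
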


This can be proved by the same method as in the appendix of \cite{BS} using the Fourier expansion of $\mathfrak{g}(z,w)$ in Proposition \ref{5.1} and algebraic properties of Fourier coefficients in Proposition \ref{3.1}. As remarked in \cite{BS}, since only the Fourier coefficients of maximal rank involved in the Fourier expansion of $\mathfrak{g}(z,w)$ we actually obtain a better result that allowing us to discuss the special values below the absolute convergence bound of Eisenstein series. More precisely, here we are assuming $\kappa>2n$ while in \cite[Theorem 6.3]{T} we need $\kappa>4n-1$ (note that $n$ there is $2n$ here).

We now start the construction of the $p$-adic measure. Let $\C_p=\widehat{\overline{\Q}}_p$ be the completion of $\overline{\Q}_p$, and fix an embedding $\overline{\Q}\to\C_p$. The $p$-adic absolute value $|\cdot|_p$ naturally extends to $\C_p$, denote
\[
\mathcal{O}_{\C_p}=\{x\in\C_p:|x|_p\leq 1\}.
\]
A $p$-adic distribution on $\Z_p^{\times}$ is a continuous linear functional $\mu:C^{\infty}(\Z_p^{\times},\C_p)\to\C_p$. To construct a $p$-adic distribution it is enough to define $\mu$ on all sets of form $a+L\Z_p$ satisfying
\[
\mu(a+L\Z_p)=\sum_{b=0}^{p-1}\mu(a+bL+pL\Z_p).
\]
For such $\mu$ and a  locally constant function $f$, we denote $\mu(f)=\int_{\Z_p^{\times}}fd\mu$. A $p$-adic distribution is called a $p$-adic measure if it is bounded.

We will follow the strategy of constructing $p$-adic measures in \cite{CP}. That is we first define a $p$-adic distribution interpolate the special $L$-values. Then check the distribution satisfying the Kummer congruences so the $p$-adic measure exist. Finally, since the $p$-adic measure is uniquely determined by its value at all primitive Dirichlet character of $p$-power conductor, we conclude that the $p$-adic distribution we constructed is the desired $p$-adic measure. We recall the criterion of Kummer congruences as follow.

\begin{prop}
\label{6.3}
Let $\{f_i\}\subset C(\Z_p^{\times},\mathcal{O}_{\C_p})$ be a dense system of continuous functions, $\{a_i\}\subset\mathcal{O}_{\C_p}$ a system of elements. Then the existence of p-adic measure $\mu:C(\Z_p^{\times},\mathcal{O}_{\C_p})\to\mathcal{O}_{\C_p}$ such that
\[
\int_{\Z_p^{\times}}f_id\mu=a_i
\]
is equivalent to the following Kummer congruences: for an arbitrary choice of elements $b_i\in\C_p$ almost all of which vanish
\[
\sum_ib_if_i(y)\in p^m\mathcal{O}_{\C_p}\text{ for all }y\in\Z_p^{\times}\Longrightarrow\sum_ib_ia_i\in p^m\mathcal{O}_{\C_p}.
\]
\end{prop}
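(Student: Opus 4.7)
The plan is to prove the equivalence by regarding a $p$-adic measure as a continuous (equivalently, bounded) $\C_p$-linear functional on $C(\Z_p^{\times},\C_p)$ with values in $\C_p$, i.e.\ an element of the continuous dual of $C(\Z_p^{\times},\C_p)$ equipped with the sup norm. Under this identification, the measure takes $\mathcal{O}_{\C_p}$-valued functions to $\mathcal{O}_{\C_p}$ precisely when its operator norm is at most $1$.

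For the easy direction (measure implies Kummer congruences), if $\mu$ is such a measure and $\sum_i b_i f_i(y)\in p^m\mathcal{O}_{\C_p}$ for every $y\in\Z_p^{\times}$, then the function $\phi=\sum_i b_i f_i$ lies in $p^m C(\Z_p^{\times},\mathcal{O}_{\C_p})$, so $\mu(\phi)=\sum_i b_i a_i\in p^m\mathcal{O}_{\C_p}$ by the $\mathcal{O}_{\C_p}$-linearity and boundedness of $\mu$.

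For the converse (Kummer congruences imply a measure), let $V$ be the $\C_p$-linear span of $\{f_i\}$ inside $C(\Z_p^{\times},\C_p)$; by hypothesis $V$ is dense. Define $\mu\colon V\to\C_p$ on a finite linear combination by $\mu\bigl(\sum_i b_i f_i\bigr)=\sum_i b_i a_i$. To see $\mu$ is well defined, suppose $\sum_i b_i f_i=0$ identically; then $\sum_i b_i f_i(y)\in p^m\mathcal{O}_{\C_p}$ for every $m$ and every $y$, so the Kummer congruence forces $\sum_i b_i a_i\in\bigcap_m p^m\mathcal{O}_{\C_p}=\{0\}$. The Kummer congruence applied for arbitrary $m$ then says exactly that whenever $\phi=\sum_i b_i f_i\in V$ satisfies $\|\phi\|_{\infty}\le p^{-m}$ one has $|\mu(\phi)|_p\le p^{-m}$, i.e.\ $\mu$ has operator norm at most $1$ on $V$.

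Since $V$ is dense in $C(\Z_p^{\times},\C_p)$ and $\C_p$ is complete, the bounded linear functional $\mu$ extends uniquely by continuity to all of $C(\Z_p^{\times},\C_p)$, preserving the norm bound, hence sending $C(\Z_p^{\times},\mathcal{O}_{\C_p})$ into $\mathcal{O}_{\C_p}$; this extended $\mu$ is the desired $p$-adic measure, and it still satisfies $\int_{\Z_p^{\times}} f_i\,d\mu=a_i$ by construction. I expect the only real care needed to be in the density/extension step, ensuring that no subtleties of the non-archimedean dual (such as choosing a countable exhausting filtration of $V$ by finitely generated subspaces on which the bound is uniform) are overlooked; everything else is formal from the well-definedness and norm estimate established via the Kummer congruences.
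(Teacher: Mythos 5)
The paper does not actually prove Proposition \ref{6.3}: it is recalled as the standard ``abstract Kummer congruences'' criterion from \cite{CP} (cf.\ also \cite{BS}), so there is no in-paper argument to compare against. Your proof is the standard functional-analytic one and is essentially correct: the forward direction is immediate from $\mathcal{O}_{\C_p}$-integrality and linearity, and the converse correctly uses the Kummer congruences first (with all $m$) to get well-definedness on the span $V$ of the $f_i$, then to get a norm bound, and finally extends by continuity using density of $V$ and completeness of $\C_p$. The one step you state a little too quickly is the passage from ``$\|\phi\|_\infty\le p^{-m}$ implies $|\mu(\phi)|_p\le p^{-m}$ for all integers $m$'' to ``operator norm at most $1$'': since the value group of $\C_p$ is $p^{\Q}$, a nonzero $\phi\in V$ need not have $\|\phi\|_\infty$ equal to an integral power of $p$, and rounding $\|\phi\|_\infty$ down to the nearest $p^{-m}$ only yields $|\mu(\phi)|_p\le p\,\|\phi\|_\infty$. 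This is repaired by a one-line scaling argument: choose $c\in\C_p^{\times}$ with $|c|_p\,\|\phi\|_\infty=p^{-m}$ for some integer $m$ (possible because $|\C_p^{\times}|_p=p^{\Q}$), apply the congruence to $c\phi$, and divide by $|c|_p$ to conclude $|\mu(\phi)|_p\le\|\phi\|_\infty$. With that supplement the extension step and the resulting integrality $\mu\bigl(C(\Z_p^{\times},\mathcal{O}_{\C_p})\bigr)\subset\mathcal{O}_{\C_p}$ go through exactly as you say; the closing worry about filtrations of $V$ is unnecessary, as the uniform norm bound on all of $V$ is already what the BLT-type extension needs.
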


Recall that $\epsilon=-1$ in Case I and $\epsilon=1$ in Case II. For a function $\mathfrak{g}(z,w)$ on $M_k(\Gamma^0(N^2p),\psi)\otimes M_k(\Gamma^0(N^2p))$ let
\[
\mathcal{F}(\mathfrak{g})=\frac{\left\langle
\left\langle
\mathfrak{g}(z,-\overline{w}),f_0\left|_k\left[\begin{array}{cc}
0 & \epsilon\\
1 & 0
\end{array}\right]\right.
\right\rangle^w_{\Gamma^0(N^2p)},f_0\left|_k\left[\begin{array}{cc}
1 & 0\\
0 & N^2p
\end{array}\right]\right.
\right\rangle^z_{\Gamma^0(N^2p)}}{\langle f_0,f_0\rangle^2_{\Gamma_0(N^2p)}}.
\]
In particular, taking $\mathfrak{g}$ as in \eqref{51},\eqref{52} we obtain the desired special $L$-values by \eqref{54}. 

We first discuss Case I. Define
\[
\mathcal{H}^t_{a,L}(z,w)=A_{t}(2n)^{-1}\sum_{\chi}\chi(a)c_{\chi}^{-n(n-1)}G(\chi)^{-n}\cdot \mathfrak{g}(z,w,\kappa,v,\chi,L),
\]
where $\chi$ runs over all characters of conductor $c_{\chi}|L$. It is clear that from the Fourier expansion of $\mathfrak{g}$ that $\alpha(L)^{-4}\mathcal{F}(\mathcal{H}^t_{a,L})$ is independent of $L$. Thus if we set 
\[
\mu_t(a+L\Z_p):=\frac{p}{(p-1)L}\alpha(L)^{-4}\mathcal{F}(\mathcal{H}^t_{a,L}),
\]
then
\[
\sum_{b=0}^{p-1}\mu_t(a+bp^n+pL\Z_p)=\sum_{b=0}^{p-1}\frac{p}{(p-1)pL}\alpha(pL)^{-4}\mathcal{F}(\mathcal{H}_{a+bp^n,pL}^t)=\mu_t(a+L\Z_p).
\]
Therefore, $\mu_t$ is a $p$-adic distribution on $\Z_p^{\times}$ such that for primitive Dirichlet character $\chi$ of $p$-power conductor $c_{\chi}=p^{\mathfrak{c}}$,
\[
\begin{aligned}
\int_{\Z_p^{\times}}\chi d\mu_t&=\sum_{a\in(\Z/L\Z)^{\times}}\chi(a)\mu_t(a+L\Z_p)\\
&=A_t(2n)^{-1}c_{\chi}^{-n(n-1)}G(\chi)^{-n}\alpha(L)^{-4}\cdot\mathcal{F}(\mathfrak{g}(z,w,\kappa,v,\chi))\\
&=c_{\chi}^{-n(n-1)}G(\chi)^{-n}\alpha(p)\alpha(c_{\chi})^{-2}\frac{\Omega^{\mathfrak{c}}_{k,v}(t)}{A_t(2n)}\frac{E_p(t,\chi)L_{Np}(f_0,t,\chi)}{\langle f_0,f_0\rangle}.
\end{aligned}
\]
These distribution indeed take algebraic values and we view them as elements in $\C_p$. By the Fourier expansion of $\mathfrak{g}(z,w)$ in Proposition \ref{5.1}, we have
\begin{equation}
\mathcal{H}_{a,L}^t(z,w)=\sum_{\tau_1,\tau_4\in\Lambda_n^+}(N^2p)^{-2\kappa n}\alpha_{a,L}(\tau_1,\tau_4)e(2N^{-1}\lambda(\tau_2))e(\lambda((N^2p)^{-1}(\tau_1z+\tau_4w))),\label{59}
\end{equation}
with
\begin{equation}
\alpha_{a,L}(\tau_1,\tau_4)=\sum_{\tau_2}\mathfrak{P}_t^{v}(\mathfrak{T})\sum_{\chi}\chi(a\det(2\tau_2)^{-1})\prod_{l\nmid Np}P(\chi(l)l^{-t}).\label{60}
\end{equation}
Here $\mathfrak{T}=\left[\begin{array}{cc}
L^2\tau_1 & \tau_2\\
\tau_2^{\ast} & L^2\tau_4
\end{array}\right]$ and $p\nmid\det(\mathfrak{T})$ so the coefficient $\alpha_{a,L}(\tau_1,\tau_4)$ is actually $p$-integral. Put 
\begin{equation}
\label{kt}
\tilde{\mathcal{H}}_{a,L}^t(z,w)=a^{t-2n}\kappa_t\mathcal{H}_{a,L}^t(z,w)\text{ with }\kappa_t=\frac{c_{2n+1}^{\kappa-2n-1}}{c_t^{v}},
\end{equation}
and 
\begin{equation}
\tilde{\mu}_t(a+L\Z_p)=\frac{p}{(p-1)L}\alpha(L)^{-4}\mathcal{F}(\tilde{H}_{a,L}^t).
\end{equation}
Here the constant $c_s^v$ is defined in \eqref{csv}. Then $\tilde{\mathcal{H}}_{a,L}^t(z,w)\equiv 
a^{t-2n-1}\cdot \tilde{\mathcal{H}}_{a,L}^{2n+1}(z,w)\text{ mod }L$ and $d\tilde{\mu}_t(x)=x^{2n+1-t}d\tilde{\mu}_{2n+1}(x)$ by congruence \eqref{41}. Set
\[
\mu(a+L\Z_p)=\frac{p}{(p-1)L}\alpha(L)^{-4}\mathcal{F}(\tilde{\mathcal{H}}_{a,L}^{2n+1}).
\]

\begin{thm}
\label{6.4}
There is a unique $p$-adic measure $\mu$ on $\Z_p^{\times}$ such that for primitive Dirichlet character $\chi$ of $p$-power conductor $c_{\chi}=p^{\mathfrak{c}}$ we have
\begin{equation}
\int_{\Z_p^{\times}}\chi(x)x^{-t+2n}d\mu=\frac{c_{\chi}^{2nt-n(n-1)}}{\alpha(c_{\chi}^2)G(\chi)^n}\frac{\Gamma_{2n}(t)\Lambda_{\infty}(t)}{\pi^{2nt-n(2n-1)}}\frac{E_p(t,\chi)L_{Np}(f,t,\chi)}{\langle f_0,f_0\rangle}.
\end{equation}
for $t=2n+1,...,\kappa$. Here
\begin{equation}
\label{lambda1}
\Lambda_{\infty}(t)=\prod_{i=0}^{\kappa-t-1}C_{2n}\left(t+i-3n+\frac{1}{2}\right)C_{2n}(-t-i)\prod_{i=0}^{n-1}\frac{\Gamma(t+\kappa-2i)}{\Gamma(t+\kappa+n-2i)}.
\end{equation}
\end{thm}

\begin{proof}
By our construction, there is a distribution such that
\begin{equation}
\label{preinterpolate1}
\int_{\Z_p^{\times}}\chi(x)x^{-t+2n}d\mu=\frac{\alpha(p)}{\alpha(c_{\chi}^2)}\frac{\kappa_{t}}{c_{\chi}^{n(n-1)}G(\chi)^n}\frac{\Omega^{\mathfrak{c}}_{\kappa,v}(t)}{A_t(2n)}\frac{E_p(t,\chi)L_{Np}(f,t,\chi)}{\langle f_0,f_0\rangle}.
\end{equation}
We now prove it is indeed a $p$-adic measure. Since the Fourier coefficients of $\tilde{H}_{a,L}^{2n+1}$ is $p$-integral, by the same argument as \cite[Lemma 9.7]{BS}, $\mathcal{F}(\tilde{H}_{a,L}^{2n+1})$ takes values in $\mathcal{O}_{\C_p}$ up to a constant $C$. That is $C^{-1}\cdot\mu$ defines a measure $C(\Z_p^{\times},\mathcal{O}_{\C_p})\to\mathcal{O}_{\C_p}$. It suffices to check the Kummer congruences for $C^{-1}\cdot\mu$ or equivalently for $\mu_t$. Let $\{b_i\}\subset\C_p$ be a system of elements with almost of all vanish and assume
\[
\sum_ib_i\chi_i(x)\in p^m\mathcal{O}_{\C_p}\text{ for all }x\in\Z_p^{\times}.
\]
We need to show that
\[
\sum_ib_i\int_{\Z_p^{\times}}\chi_i(x)d\mu_t\in p^m\mathcal{O}_{\C_p}.
\]
This equals
\[
\mathcal{F}\left(A_t(2n)^{-1}\alpha(L)^{-4}\sum_ib_ic_{\chi_i}^{-n(n-1)}G(\chi_i)^{-n}\cdot \mathfrak{g}(z,w)\right).
\]
Taking the Fourier expansion of function under the bracket as \eqref{59}, its Fourier coefficients equals
\[
\begin{aligned}
\alpha_{a,L}(\tau_1,\tau_4)&=\alpha(L)^{-4}\sum_{\tau_2}\mathfrak{P}_t^{v}(\mathfrak{T})\det(\mathfrak{T})^{t-\frac{4n-1}{2}}\\
&\times \sum_{i}b_i\chi_i(a\det(2\tau_2)^{-1})\cdot\prod_{l\nmid Np}P(\chi_i(l)l^{-t}).
\end{aligned}
\]
The theorem follows since the first line in above formula is $p$-integral as $\alpha(L)$ is a $p$-adic unit by the ordinarity assumption. Recall that $\kappa_t, \Omega^{\mathfrak{c}}_{k,v}(t), A_t(2n)$ are constants defined in \eqref{kt}, \eqref{omega}, \eqref{A(2n)}. The straighforward computations provides us the interpolation formula as in the theorem from \eqref{preinterpolate1}.
\end{proof}

We now extend above discussion to Case II. As in symplectic case we need assumption on the parity of characters. In Case I the parity assumption is not necessary due to the fact that $\mathrm{det}(g)\in\R_+$ for $g\in\mathrm{GL}_n(\mathbb{H})$ (see \cite[Remark 3.1]{Q}).

\begin{thm}
\label{6.5}
There is a unique $p$-adic measure $\mu$ on $\Z_p^{\times}$ such that for primitive Dirichlet character $\chi$ of $p$-power conductor $c_{\chi}=p^{\mathfrak{c}}$ and $\chi(-1)=(-1)^{t}$ we have
\begin{equation}
\begin{aligned}
\int_{\Z_p^{\times}}\chi(x)x^{-t+2n}d\mu&=\frac{c_{\chi}^{(2n-1)t-n(n+1)}}{\alpha(c_{\chi}^2)}\frac{g(\chi)}{G(\chi)^n}\frac{\Gamma(t-2n)\Gamma_{4n}(t)\Lambda_{\infty}(t)}{\pi^{(4n+1)t-n(2n-1)}}\\
&\times\frac{1-\overline{\chi(p)}p^{t-2n-1}}{1-\chi(p)p^{2n-t}}\frac{E_p(t,\chi)L_{Np}(f_0,t,\chi)}{\langle f_0,f_0\rangle},
\end{aligned}
\end{equation}
Here $g(\chi)$ is the Gauss sum for Dirichlet character $\chi$ and
\begin{equation}
\label{lambda2}
\Lambda_{\infty}(t)=\prod_{i=0}^{\kappa-t-1}C_{2n}\left(t+i-2n+\frac{1}{2}\right)C_{2n}(-t-i)\prod_{i=0}^{2n-1}\frac{\Gamma\left(\frac{t+\kappa-i}{2}\right)}{\Gamma\left(\frac{t+\kappa+2n-i}{2}\right)}.
\end{equation}
\end{thm}

\begin{proof}
The proof is similar to Case I but note that the Fourier coefficient of $\mathfrak{g}(z,w)$ has a term of Dirichlet $L$-functions. For $2n+1\leq t\leq \kappa$, define
\[
\begin{aligned}
\mathcal{H}_{a,L}^t(z,w)&=A_t(2n)^{-1}\sum_{\chi}\chi(a)c_{\chi}^{-n(n-1)}G(\chi)^{-n}\\
&\times 2(2\pi i)^{2n-t}c_{\chi}^{t-2n}g(\chi)\Gamma(t-2n)\frac{1-\overline{\chi(p)}p^{t-2n-1}}{1-\chi(p)p^{2n-t}}\cdot\mathfrak{g}(z,w),
\end{aligned}
\]
and 
\[
\mu_t(a+L\Z_p)=\frac{p}{(p-1)L}\alpha(L)^{-4}\mathcal{F}(\mathcal{H}_{a,L}^t).
\]
Then $\mu_t$ is a $p$-adic distribution on $\Z_p^{\times}$ such that 
\[
\begin{aligned}
\int_{\Z_p^{\times}}\chi d\mu_t&=2(2\pi i)^{2n-t}\alpha(p)\alpha(c_{\chi})^{-2}c_{\chi}^{-n(n-1)}G(\chi)^{-n} \\
&\times c_{\chi}^{t-2n}g(\chi)\Gamma(t-2n)\frac{1-\overline{\chi(p)}p^{t-2n-1}}{1-\chi(p)p^{2n-t}}\frac{\Omega_{\kappa,v}(t)}{A_t(2n)}\frac{E_p(t,\chi)L_{Np}(f_0,t,\chi)}{\langle f_0,f_0\rangle}.
\end{aligned}
\]
Take the Fourier expansion of $\mathcal{H}_{a,L}^t(z,w)$ as in Case I. Its Fourier coefficient can be written as
\[
\begin{aligned}
\alpha_{a,L}(\tau_1,\tau_4)&=N^{-4\kappa n}\sum_{\tau_2}\det(\mathfrak{T})^{t-\frac{4n+1}{2}}\mathfrak{P}_t^{v}(\mathfrak{T})\sum_{\chi}\chi(a\det(2\tau_2))^{-1}\prod_{l\nmid Np}P(\chi(l)l^{-t})\\
&\times2(2\pi i)^{2n-t}c_{\chi}^{t-2n}g(\chi)\Gamma(t-2n)\left(1-\overline{\chi(p)}p^{t-2n-1}\right)L_{N}(t-2n,\chi\rho_{\mathfrak{T}}).
\end{aligned}
\]

We now check that the distribution $\mu_l$ satisfies Kummer congruences. Let $\{b_i\}\subset\C_p$ be a system of elements with almost of all vanish and assume
\[
\sum_ib_i\chi_i(x)\in p^m\mathcal{O}_{\C_p}\text{ for all }x\in\Z_p^{\times}.
\]
We need to show that
\[
\sum_ib_i\int_{\Z_p^{\times}}\chi_i(x)d\mu_l\in p^m\mathcal{O}_{\C_p}.
\]
This equals
\[
\mathcal{F}\left(2(2\pi i)^{t-2n}\Gamma(t-2n)A_t(2n)^{-l}\alpha(L)^{-4}\sum_ib_i\chi_i(a)c_{\chi_i}^{-n}G(\chi_i)^{-n}\right.
\]
\[
\left.c_{\chi_i}^{2n-t}g(\chi_i)\frac{1-\overline{\chi_i(p)}p^{t-2n-1}}{1-\chi_i(p)p^{2n-t}}\cdot\mathfrak{g}(z,w)\right).
\]
Taking the Fourier expansion of function under the bracket, its Fourier coefficients equals
\[
\begin{aligned}
\alpha_{a,L}(\tau_1,\tau_4)&=\alpha(L)^{-4}\sum_{\tau_2}\mathfrak{P}_t^{v}(\mathfrak{T})\det(\mathfrak{T})^{t-\frac{4n+1}{2}}\\
&\times\sum_ib_i(1-\overline{\chi_i(p)}p^{t-2n-1})L(1-t+2n,\overline{\chi_i\rho_{\mathfrak{T}}})\chi_i(a\det(4\tau_2))^{-1}\\
&\times\prod_{l\nmid Np}P(\chi_i(l)l^{-t})\cdot\prod_{l|N}\left(1-\chi_i\rho_{\mathfrak{T}}(l)l^{2n-t}\right).
\end{aligned}
\]
Here we are using the functional equation of Dirichlet $L$-functions. The sum over $i$ is in $p^m\mathcal{O}_{\C_p}$ by the Kummer congruences for Dirichlet $L$-functions (see \cite[Section 8]{BS} for $p$-adic measure attached to Dirichlet $L$-function). Gluing all these $\mu_t$ together as in Case I by using the congruence properties of $\mathfrak{P}_t^{v}(\mathfrak{T})$, we then obtain the desired $p$-adic measure $\mu$.  Recall the definition of  $\kappa_t, \Omega^{\mathfrak{c}}_{k,v}(t), A_t(2n)$ from \eqref{kt}, \eqref{omega}, \eqref{A(2n)} and The straighforward computations provides us the interpolation formula as in the theorem.
\end{proof}

\appendix
\section{A Transformation Property of Differential Operators}
\label{appendix}
In the appendix, we prove a transformation property of differential operators for hermitian matrices in the quaternionic case which will be used in Lemma \ref{lemma4.4}. The proofs here are almost same as the computations in \cite{Bo}.

We keep our notations as in Section \ref{section 4}. Recall that the space of quaternionic hermitian matrices
\[
S_n(\R)=\{y\in M_n(\mathbb{H}):y^{\ast}=y\}
\]
is identified with a subspace of complex hermitian matrices
\[
S_{2n}'(\R)=\{y\in M_{2n}(\C):y^{\ast}=y,J_n'y=\overline{y}J_n'\}.
\]
Denote $\partial_{ij}=\frac{\partial}{\partial y_{ij}}$ for basic differential operators and write them in a matrix $\partial=(\partial_{ij})$. For a (complex) hermitian matrix $\tau\in M_{2n}(\C)$ and $0\leq p\leq 2n$ we have
\[
\partial^{[p]}e^{\mathrm{tr}(\tau y)}=(\transpose{\tau}-J_n'\tau J_n')^{[p]}e^{\mathrm{tr}(\tau y)},\qquad y\in S_{2n}'(\R).
\]
Our computations will based on this basic identity and this cause the difference with \cite{Bo}.

\begin{lem}
Assume $y\in S'_{2n}(\R)$ is positive definite. The for $0\leq q\leq 2n$ and $s\in\C$ we have
\[
\partial^{[q]}\det(y)^s=C_q(s)\det(y)^s\cdot\transpose{y}^{-[q]},
\]
where 
\[
C_q(s):=s\left(s+\frac{1}{2}\right)...\left(s+\frac{q-1}{2}\right).
\]
\end{lem}

\begin{proof}
The proof is same as \cite[Proposition 3.5]{Bo}. It suffices to prove for $s=-2n$. Starting with \cite[Lemma 3.4]{Bo},
\[
\det(y)^{-2n}=\int_{M_{2n}(\C)}e^{-\pi\mathrm{tr}(x^{\ast}yx)}dx,
\]
and applying \cite[Lemma 3.3]{Bo} we have
\[
\begin{aligned}
\partial^{[q]}\det(y)^{-2n}&=\int_{M_{2n}(\C)}\partial^{[q]}e^{-\pi\mathrm{tr}(x^{\ast}yx)}dx\\
&=(-\pi)^q\int_{M_{2n}(\C)}(\transpose{(xx^{\ast})}-J_n'xx^{\ast}J_n')^{[q]}e^{-\pi\mathrm{tr}(x^{\ast}yx)}dx\\
&=(-\pi)^q\sum_{a+b=q}\left(\begin{array}{c}q\\a\end{array}\right)\frac{a!}{\pi^a}\left(\begin{array}{c}
2n\\a
\end{array}\right)\frac{b!}{\pi^b}\left(\begin{array}{c}
2n\\b
\end{array}\right)\det(y)^{-2n}(\transpose{y}-J_n'yJ_n)^{[-q]}\\
&=(-1)^qq!\left(\begin{array}{c}
4n\\
q
\end{array}\right)\det(y)^{-2n}\cdot2^{-q}\transpose y^{-[q]}\\
&=(-2n)(-2n+\frac{1}{2})...(-2n+\frac{q-1}{2}).
\end{aligned}
\]
as desired.
\end{proof}

We now prove the transformation property under $y\mapsto y^{-1}$.

\begin{prop}
For $0\leq q\leq 2n$, we consider the operator $D:=\transpose{y}^{[q]}\partial^{[q]}$ and the operator $\widehat{D}$ defined by $\widehat{D}(f):=D(\widehat{f})(Y^{-1})$ with $\widehat{f}(y):=f(y^{-1})$. Then
\[
\widehat{D}=(-1)^q\det(y)^{\frac{q-1}{2}}D^{\ast}\det(y)^{-\frac{q-1}{2}}
\]
with $D^{\ast}=\transpose(y^{[q]}\cdot\transpose{\partial}^{[q]})$.
\end{prop}

\begin{proof}
The proof is same as \cite[Theorem 3.6]{Bo}. Denote
\[
L:=\widehat{D}(f)(y),\qquad R:=(-1)^q\det(y)^{\frac{q-1}{2}}y^{[q]}\cdot\transpose{\partial}^{[q]}\det(y)^{-\frac{q-1}{2}}f.
\]
It suffices to prove $L=\transpose{R}$ for $f(y):=\det(y)^{2n}e^{\pi\mathrm{tr}(\tau y)}$ with $\tau$ (complex) hermitian and positive definite.

We calculate that
\[
R=(-1)^qf(y)y^{[q]}\sum_{a+b=q}\pi^b\left(\begin{array}{c}q\\a\end{array}\right)C_q(2n-\frac{q-1}{2})y^{-[a]}\sqcap(\transpose{\tau}-J_n'\tau J_n')^{[b]}.
\]
The same computations in \cite[Theorem 3.6]{Bo} for $L$ givens
\[
\partial^{[q]}(\widehat{f})(y^{-1})=(-\pi)^q2^{-q}f(y)C(q)\transpose{y}^{[q]}\cdot\transpose{y}^{-[a]}\sqcap(\transpose{\tau}-J_n'\tau J_n')^{[b]}
\]
where the factor
\[
\begin{aligned}
C(q)&=2^{-q}\sum_{q_1+q_2=q}\left(\begin{array}{c}
q\\
q_1
\end{array}\right)\sum_{\substack{a_1+b_1=q_1\\a_2+b_2=q_2}}\frac{a_1!}{\pi^{a_1}}\left(\begin{array}{c}
q_1\\
a_1
\end{array}\right)\left(\begin{array}{c}
2n-b_1\\
a_1
\end{array}\right)\frac{a_2!}{\pi^{a_2}}\left(\begin{array}{c}
q_1\\
a_2
\end{array}\right)\left(\begin{array}{c}
2n-b_2\\
a_2
\end{array}\right)\\
&=\sum_{a+b=q}\pi^{-a}\left(\begin{array}{c}
q\\
a
\end{array}\right)\sum_{\substack{a_1+a_2=a\\
b_1+b_2=b}}\left(\begin{array}{c}
a\\
a_1
\end{array}\right)\left(\begin{array}{c}
b\\
b_1
\end{array}\right)D_{a_1}(-2n+b_1)D_{a_2}(-2n+b_2)\\
&=\sum_{a+b=q}\pi^{-a}\left(\begin{array}{c}
q\\
a
\end{array}\right)2^{b}D_a(-4n+b-1).
\end{aligned}
\]
Here we are denoting $D_q(s):=s(s+1)...(s+q-1)$ as in \cite{Bo}. Comparing $L,R$ and note that
\[
(-\pi)^q2^{-q}\cdot\pi^{-a}2^bD_a(-4n+b)=(-1)^q\pi^bC_a(-2n+\frac{q-1}{2}),
\]
we obtain $L=\transpose{R}$ as desired.
\end{proof}

As a corollary we have (\cite[Corollary 3.7]{Bo}) 
\[
\partial^{[q]}(\widehat{f})=(-1)^q\transpose{y}^{-[q]}\det(y)^{-\frac{q-1}{2}}\left(\partial^{[q]}(f\cdot\det(y)^{-\frac{q-1}{2}})\right)(y^{-1})\cdot\transpose{y}^{-[q]}.
\]
Extending to the whole quaternionic upper half plane $\mathcal{H}_n$ (identified with $\mathfrak{H}_{2n}$) we obtain Lemma \ref{lemma4.4} as \cite[Proposition 3.8]{Bo}.

\section*{Acknowledgement}

This paper is prepared as part of my thesis at Durham University. I would like to express my gratitude to my supervisor, Thanasis Bouganis, for his helpful suggestions and discussions. I also thank the referee for providing several helpful comments.

\end{document}